\documentclass[reqno]{amsart}
\usepackage[toc,page]{appendix}
\usepackage[headheight=12.1pt]{geometry}
\usepackage[all,cmtip]{xy}
\usepackage{amsmath}
\usepackage{amsfonts}
\usepackage{amssymb}
\usepackage{mathrsfs}   
\usepackage{amsthm}
\usepackage{enumerate}
\usepackage{scalefnt} 
\usepackage{xcolor}
\usepackage{cite}
\usepackage{stmaryrd}
\usepackage{graphicx}
\usepackage{tikz-cd}
\usepackage{pgf}
\usepackage{eepic}
\usepackage{pstricks}
\usepackage{epsfig}
\usepackage{fancyhdr}
\usepackage{tikz,caption,float}
\usepackage[backref=page]{hyperref} 

\usepackage{amsbsy}

\numberwithin{equation}{subsection}

\let\realequation\equation
\def\equation{\setcounter{equation}{\arabic{subsubsection}}%
   \refstepcounter{subsubsection}%
   \realequation}

\usepackage{xr,mathtools}

    \definecolor{nicered}{rgb}{0.64,0,0}
        \definecolor{greeny}{rgb}{0,0.58,0}
\hypersetup{
     colorlinks   = true,
     citecolor    = nicered,
     linkcolor = blue,
     urlcolor = violet
}
    
\setlength{\footskip}{30pt}

\setcounter{tocdepth}{1} 

\geometry{
 a4paper,
 left=30mm,
 right=30mm,
 top=40mm,
 bottom=40mm,
 }

 \pagestyle{fancy}
 \fancyhf{}

\theoremstyle{plain}

\newtheorem{theorem}{Theorem}[section]

\newtheorem{proposition}[theorem]{Proposition}

\newtheorem{corollary}[theorem]{Corollary}

\newtheorem{lemma}[theorem]{Lemma}

\newenvironment{customthm}[1]
{\innercustomthm}
{\endinnercustomthm}
\theoremstyle{definition}
\newtheorem{definition}[theorem]{Definition}

\newtheorem{rmk}[theorem]{Remark}

\theoremstyle{remark}

\newcommand{\N}{{\mathbb N}}

\renewcommand{\lim}[1]{\underset{#1}{\mathrm{lim}}\,}

\newcommand{\D}{\mathbb{D}}
\newcommand{\Dnull}{\mathring{\mathbb{D}}}


\newcommand{\nach}{\to}
\newcommand{\auf}{\mapsto}
\newcommand{\nachinj}{\hookrightarrow}
\newcommand{\negjoinrel}{\mathrel{\mkern3mu}}
\newcommand{\rlaprel}[1]{\mathrel{\mathrlap{#1}}}
\newcommand{\nachsurj}{\rlaprel{\to}\negjoinrel\to}
\newcommand{\btrg}[1]{\lvert #1 \rvert}

\newcommand{\tn}[1]{\textnormal{#1}}
\newcommand{\mc}[1]{\mathcal{#1}}

\newcommand{\mf}[1]{\mathfrak{#1}}
\newcommand{\mb}[1]{\mathbb{#1}}

\newcommand{\tblack}[1]{\textcolor{black}{#1}}
\newcommand{\oo}[1]{\mathring{#1}}
\newcommand{\wt}[1]{\widetilde{#1}}
\newcommand{\wh}[1]{\widehat{#1}}
\newcommand{\ov}[1]{\overline{#1}}
\newcommand{\quot}[1]{``{#1}''}  
\newcommand{\invers}[1]{{#1}^{-1}}
\newcommand\isoauf{\xrightarrow{
		 \smash{\raisebox{-0.32ex}{\ensuremath{\scriptstyle\sim}}}}}  

\newcommand\com[2]{{#1}^{\wedge #2}}
\newcommand\loc[2]{{#1}_{#2}}

\newcommand{\citestacks}[1]{\cite[\href{https://stacks.math.columbia.edu/tag/#1}{Tag #1}]{Sta23}}
\newcommand{\doublecitestacks}[2]{\cite[\href{https://stacks.math.columbia.edu/tag/#1}{Tag #1}, \href{https://stacks.math.columbia.edu/tag/#2}{Tag #2}]{Sta23}}

\newcommand{\tens}[1]{                 
	\mathbin{\mathop{\otimes_{#1}}
	}
}
\newcommand{\ctens}[1]{
	\mathbin{\mathop{\widehat{\otimes}_{#1}}
	}
}
\newcommand{\supr}[1]{\lvert #1 \rvert_{\ssup}}
\newcommand{\faserphi}[1]{\invers{\phi}(#1)}
\newcommand{\faserphit}[1]{\invers{\wt{\phi}}(#1)}

\DeclareMathOperator{\Hom}{Hom}

\DeclareMathOperator{\End}{End}
\DeclareMathOperator{\Sp}{Sp}
\DeclareMathOperator{\Gal}{Gal}

\DeclareMathOperator{\Ext}{Ext}

\DeclareMathOperator{\Spec}{Spec}

\DeclareMathOperator{\res}{res}

\DeclareMathOperator{\ssup}{sup}

%
%
\DeclareFontFamily{U}{mathc}{}
\DeclareFontShape{U}{mathc}{m}{it}%
{<->s*[1.03] mathc10}{}
\DeclareMathAlphabet{\mathcal}{U}{mathc}{m}{it}
\DeclareMathOperator{\sHom}{\mathcal{H\mkern-3mu om}}
\DeclareMathOperator{\sExt}{\mathcal{E\mkern-3mu xt}}

\newcommand{\id}{\mathrm{id}}

\renewcommand{\to}{\rightarrow}

\newcommand{\Tr}{\mathrm{Tr}}

\usepackage[foot]{amsaddr}

\title{Trace maps on rigid Stein spaces}
\author{Milan Malčić}     
	\address{Mathematisches Institut \\ Im Neuenheimer Feld 205 \\ D-69120 Heidelberg}
		\email{mmalcic@mathi.uni-heidelberg.de} 

\SetSymbolFont{stmry}{bold}{U}{stmry}{m}{n}
\usepackage{bm}
\usepackage{scalerel}

\fancyhead[RO]{Milan Malčić}
\fancyhead[LE]{Trace Maps on Rigid Stein Spaces}
\fancyfoot[C]{\thepage}

	\begin{document} 

\begin{abstract} 
	{We provide a \textit{relative} version of the trace map from the work of Beyer, which can be associated to any finite étale morphism $ X \nach Y$ of} smooth rigid Stein spaces and which then relates  the Serre duality on $X$ with the Serre duality on $Y$.
	Furthermore, we consider the behaviour of any rigid Stein space under (completed) base change to any complete extension field and deduce a commutative diagram relating Serre duality over the base field with the Serre duality over the extension field.
\end{abstract}

\maketitle 

\tableofcontents

\section*{Introduction}

 Serre duality for rigid
Stein spaces was first established in \cite{Chi90}, but  Peter Beyer subsequently provided another proof appearing in his dissertation \cite[Satz 7.1]{Bey97b} and outlined it in  \cite[Remark 5.1.5]{Bey97a}. Given a smooth $d$-dimensional rigid Stein space $X$ over a non-archimedean field $K$, Beyer's approach allows for an explicit description of the trace map $$t_X \colon H^d_c(X, \omega_X) \nach K$$ that turns the canonical bundle $\omega_X$ into a dualizing sheaf. Here $H^*_c(X,-)$ denotes the compactly supported cohomology of the rigid space $X$ with coefficients in coherent sheaves, defined as in \cite[\S 1.1]{Bey97a} and \cite[\S1]{Chi90} and topologized as in \cite[§1.3]{Bey97a} and \cite[1.6]{vdP92}.

Our first goal is to provide a relative version of the trace map for rigid Stein spaces. In fact, 
 if $\alpha \colon X \nach Y$ is a finite étale morphism of smooth {connected} $d$-dimensional Stein spaces over $K$, there is a  natural candidate
$$
t_\alpha \colon \alpha_*\omega_X \nach \omega_Y
$$ 
that appears in \cite[p.\ 48]{SV23} and is already called \quot{the relative trace map} there. Locally, for an affinoid $\Sp(A)\subseteq Y$ and its preimage $\Sp(B) \subseteq X$, $t_\alpha$ is defined by
$$
\begin{aligned}
	\Omega^d_{B/K} = \Omega^d_{A/K} \otimes_A B & \nach \Omega^d_{A/K} \\
	\omega \otimes b & \auf \Tr_{B/A}(b) \cdot \omega,
\end{aligned}
$$
where  $\Omega^d_{A/K}:=\wedge^d \Omega^1_{A/K}$ and $\Omega^1_{A/K}$ denotes the  \quot{universally finite} differential module  (characterized by being finitely generated over $A$ and universal for derivations into finitely generated $A$-modules), and $\Tr_{B/A}$ is the trace of the finitely generated projective $A$-module $B$. We prove that $t_\alpha$ indeed deserves to be called the relative trace map:

\begin{customthm}{1}[Theorem \ref{simpl thm rel tr}, Proposition \ref{prop compatib}]\label{introthm1}
	Let $\alpha \colon X \nach Y$ be a finite étale morphism of smooth {connected} $d$-dimensional Stein spaces over $K$. 
	Then the following  diagram commutes:
	\[
	\begin{tikzcd}
		& {H^d_c(Y, \omega_Y)} \arrow[rd, "t_Y"]  &    \\
		{H^d_c(Y, \alpha_*\omega_X)} \arrow[ru, "{H^d_c(Y, t_\alpha)}"] \arrow[rd, "\cong"'] &                                          & K. \\
		& {H^d_c(X, \omega_X)} \arrow[ru, "t_X"'] &   
	\end{tikzcd}
	\]
	In particular, letting $\Ext^i_X(\alpha^* \mc G, \omega_X) \nach \Ext^i_Y(\mc G, \omega_Y)$ denote the morphism constructed from $t_\alpha$ and the adjunction morphism $\mc G \nach \alpha_* \alpha^* \mc G$ as in {Proposition} \ref{prop compatib}, one obtains a commutative diagram of Serre duality pairings
	\[
	\begin{tikzcd}
		{H^{d-i}_c(X, \mc \alpha^* \mc G)^\vee} \arrow[r, "\sim"] \arrow[d] & {\Ext^i_X(\alpha^* \mc G, \omega_X)} \arrow[d]                                                                                                   \\
		{H^{d-i}_c(Y, \mc G)^\vee} \arrow[r, "\sim"]                        & {\Ext^i_Y(\mc G, \omega_Y)}                                                                                                                                       
	\end{tikzcd}
	\]
	for every coherent sheaf $\mc G$ on $Y$ and all $i \geq 0$.
\end{customthm}

This theorem is of interest in the study of reciprocity laws for {$(\varphi_L, \Gamma_L)$}-modules over Lubin-Tate extensions as done by  Schneider and Venjakob. Indeed, in  \cite[\S 4.2]{SV23}, Theorem \ref{introthm1} is used as a conceptual way to obtain functorial properties of pairings arising from Serre duality on certain specific rigid Stein spaces. We mention that the recent work of Abe and Lazda \cite{AL20} constructs a trace map on proper pushforwards of analytic adic spaces, and that some of their results can be related to our Theorem \ref{introthm1}. Whereas \cite{AL20} develops a general machinery, our constructions are rather explicit, and it would be an interesting project to fully compare these two approaches.

The proof of Theorem \ref{introthm1} builds on 
the technique of investigating $t_X$ in the special case of so-called \emph{special affinoid wide-open
	spaces} (which we review in \S \ref{subsec sp wid op} below) via a relation between algebraic local cohomology and compactly supported rigid cohomology established in  \cite{Bey97a}. For this purpose we also prove a generalized version of Bosch’s theorem \cite[Satz 7.1]{Bos77} on the
connectedness of formal fibers, which closes an argumentative gap in \cite{Bey97a}. 
Namely,  there is a crucial technical lemma \cite[Lemma 4.2.2]{Bey97a} underlying Beyer's arguments, which asserts that special affinoid wide-open spaces can be characterized in two equivalent ways, and the proof of this lemma relies on Bosch's connectedness result. However, Bosch's result contains the assumption that the affinoid algebra under consideration is \emph{distinguished} (see Definition \ref{defi disting}) and this assumption is not satisfied in the general setting of \cite{Bey97a}, resulting in an argumentative gap. 

 The generalized connectedness result is proved in Theorem \ref{thm faser zshgd im allg} for affinoid algebras that become distinguished after a base change to a finite Galois extension. 
This indeed bridges the gap in the setting of smooth special affinoid wide-open spaces, since Lemma \ref{Erw nach der wir disting sind} proves that the relevant affinoid algebras can be made distinguished after a base change to a finite Galois extension. We explain this further in \S \ref{subsec sp wid op}.

Our second goal is to show that Beyer's trace map  behaves well under (completed) base change to any complete extension field $K'/K$. 
More precisely, for any separated rigid space $X$ over $K$, let
$
X':=X \ctens{K} K'
$
denote the base change of $X$ to $K'$ and let $
\mc F \rightsquigarrow \mc F'
$ be the exact \quot{pullback} functor
from coherent sheaves on $X$ to coherent sheaves on $X'$.
For a Stein space $X$ over $K$, we construct comparison maps
$
H^j_{c}(X, \mc F) \tens{K} K' \nach {H^j_{c}(X', \mc F')}
$
in Section \ref{base change section} and prove:
	\begin{customthm}{2}[Corollary \ref{bc ult result}]\label{intro thm bc}
		Let $X$ be a smooth rigid Stein $K$-space of dimension $d$. Then,  for every coherent sheaf $\mc F$ on $X$, the diagram
			\[
				\begin{tikzcd}
					{H^{d-i}_c(X', \mc F')} \arrow[r, "\times", phantom]         & {\Ext^{i}_{X'}(\mc F', \omega_{X'})} \arrow[r]    & H^{d}_c(X', \omega_{X'})   \arrow[r, "t_{X'}"] & K'          \\
					{H^{d-i}_c(X, \mc F)} \arrow[r, "\times", phantom] \arrow[u] & {\Ext^{i}_X(\mc F, \omega_X)} \arrow[u] \arrow[r] & H^{d}_c(X, \omega_{X}) \arrow[u]  \arrow[r, "t_X"] & K \arrow[u]
				\end{tikzcd}
			\]
		commutes for all $i \geq 0$.
	\end{customthm}
\noindent This result is likewise relevant in the context of  \cite[\S 4.2]{SV23}, where it is used to argue that the constructions carried out there are compatible with base change. 

\subsection*{Acknowledgements}

This article is based on the author's Ph.D.\ thesis \cite{Mal23} written under the supervision of Otmar Venjakob and we thank him for his advice and many helpful discussions. We thank Katharina Hübner for her valuable remarks on \cite{Mal23} and for her careful reading of this manuscript. We are also grateful to Peter Schneider, who showed interest in the genesis of \cite{Mal23} and was helpful with several questions. Finally, Section \ref{sec bosch} has resulted from an email correspondence with Werner Lütkebohmert and we thank him for his valuable input. This research was funded by the Deutsche Forschungsgemeinschaft (DFG, German Research Foundation) TRR 326 \textit{Geometry and Arithmetic of Uniformized Structures}, project number 444845124.	

\subsection*{Data availibility statement}
Data sharing not applicable to this article as no datasets were
generated or analysed during the current study.

\subsection*{Conflict of interest statement}
The author states that there is no conflict of interest.

\subsection*{Notation and conventions}

We use the language of \cite{BGR84}, i.e.\ we work with rigid spaces in the sense of Tate.
 All rigid spaces are assumed to be separated. (Note that this assumption is superflous for Stein spaces, since they are automatically separated, see \cite[Satz 3.4]{Lut73}.)

Recall that, if $\{X_i\}_{i \in \Sigma}$ is a subset of the set of connected components of a rigid space $X$, then $\cup_{i \in \Sigma} X_i$ is admissible open and $\{X_i\}_{i \in \Sigma}$ is an admissible covering thereof. Moreover, $\{X_i\}_{i \in \Sigma}$ are precisely the connected components of $\cup_{i \in \Sigma} X_i$.  \label{cf op}
 The admissible opens of $X$ that arise in this way are precisely the admissible opens of $X$ that are also analytic sets, i.e.\ \quot{open and closed}.
 We refer to such subsets as \emph{clopen}.

Let $K$ be a complete non-archimedean nontrivially valued and perfect field, $\mf o_K$ be the ring of integers in $K$ and
 $k$ be the residue field of $K$.
Let $K\langle \xi_1, \ldots, \xi_n \rangle$ be the free Tate algebra in $n$ variables, for which we also write $T_n$ or $T_n(K)$ when we wish to emphasize the base field. We set 
	$\mb D^n:=\mb D^n_K:= \Sp K\langle \xi_1, \ldots, \xi_n \rangle$ and
	$ \mathring{\mb D}^n:=\mathring{\mb D}^n_K:=\{x \in \mb D^n \colon \btrg{\xi_i(x)}<1 \tn{ for all } i=1,\ldots, n\}$.
The assumption that K is perfect, originating from \cite{Bey97a}, is required solely for ensuring the existence of the residue map on local cohomology \cite[Definition 4.2.7]{Bey97a} for any given point. In \S\ref{subsec: res map}, we discuss this residue map, which serves as a useful tool in our analysis.

Recall that the reduction of an affinoid algebra $R$ is $\widetilde R=\oo R/ \check R$ where 
$\oo R =\{f \in R \colon \btrg{f}_{\ssup} \leq 1\}$ is the $\mf o_K$-algebra of all power-bounded elements in $R$
and $	\check R = \{f \in R \colon \btrg{f}_{\ssup} < 1\}$ is the $\oo R$-ideal of all topologically nilpotent elements in $R$. 	The \emph{reduction} $\widetilde{Z}$ of the affinoid space $Z:=\Sp(R)$ is the affine algebraic variety given by the maximal spectrum of $\widetilde{R}$.
There is a functorial reduction map $p \colon Z \nach \widetilde{Z}$ defined as in \cite[\S 7.1.5]{BGR84}, which is surjective by \cite[7.1.5/Theorem 4]{BGR84}. For the reduction of a point $z \in Z$, we often use the notation
	$p(z)$  and $\wt z$
interchangeably. 	For $z \in Z$, the fibers 
	$Z_+(z):=p^{-1}(p(z))$
of the reduction map $p \colon Z \nach \wt Z$ are called \emph{the formal fibers of $Z$}, the terminology and notation being as in \cite{Bos77}.

All rings are assumed commutative and unital. We recall the following characterization of étale ring maps from a field: For any field $F$ and any ring $A$, a ring map $F \nach A$ is étale if and only if $A$ is isomorphic as an $F$-algebra to a finite product of finite separable field extensions of $F$, see \citestacks{00U3}.
A torsion-free morphism $A \nach B$ from an integral domain $A$ into a ring $B$ is called \emph{separable} if the induced map $Q(A)\nachinj Q(B)$ of the fraction field $Q(A)$ of $A$ into the total ring of fractions $Q(B)$ of $B$ is étale. 
Accordingly, a morphism $\Sp(S) \nach \Sp(R)$ of affinoid spaces is called separable if the ring morphism $R \nach S$ is separable.

If $A$ is a ring and $\mf a$ an ideal of $A$, we write $\com{A}{\mf a}$ for the $\mf a$-adic completion of $A$. If $A$ is local, we write $\wh A$ for the completion with respect to the maximal ideal. If $M$ is an $A$-module, we let $H^j_\mf{a}(M)$ denote the $j$-th local cohomology of $M$ with support in $\mf a$ (see, for instance, \cite{Hun07}). It is computed by deriving the left-exact functor $\Gamma_{\mf a}(M):=\{m \in M \colon \mf a^tM=0 \tn{ for some } t \in \N\}$.
\section{A generalized version of a connectedness result by Bosch}\label{sec bosch}
\subsection{Distinguished affinoid algebras}
We recall the following notion from \cite[6.4.3/Definition 2]{BGR84}:
\begin{definition}\label{defi disting}
	Let $R$ be a $K$-affinoid algebra. A surjective morphism $\alpha \colon K \langle \xi_1, \ldots \xi_m \rangle \nachsurj R$ is called \emph{distinguished} if the residue norm $|\cdot|_\alpha$ coincides with the supremum semi-norm $|\cdot|_{\ssup}$ on $R$. A $K$-affinoid algebra $R$ is called \emph{distinguished} if for some $m \geq 0$ it admits a distinguished $\alpha  \colon K \langle \xi_1, \ldots \xi_m \rangle \nachsurj R$.
\end{definition}
\begin{definition}[\hspace{-0.2mm}{\cite[p.\ 138]{Bos70}}]\label{def abs reduced}
	A $K$-affinoid algebra $R$ is called \emph{absolutely reduced} over $K$, if for every complete field extension $K'$ of $K$, the algebra $R \ctens{K} K'$ is reduced.
\end{definition}
\begin{lemma}\label{Erw nach der wir disting sind}
	Let $R$ be an absolutely reduced $K$-affinoid algebra. Then there exists a finite Galois field extension $K'/K$ such that the algebra $R \tens{K} K'$ is distinguished.
\end{lemma}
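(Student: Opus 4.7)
The plan is to reduce to the characterization of distinguished affinoid algebras from \cite[6.4.3]{BGR84}: a $K$-affinoid algebra $S$ is distinguished if and only if $S$ is reduced and $\btrg{f}_{\ssup}\in\btrg{K^\times}\cup\{0\}$ for every $f\in S$. Since $R$ is absolutely reduced, $R\ctens{K}K'$ is automatically reduced for every complete extension $K'/K$, so it suffices to produce a finite Galois extension $K'/K$ that absorbs the spectral values of $R$, in the sense that $\btrg{g}_{\ssup}\in\btrg{K'^\times}\cup\{0\}$ for all $g\in R\ctens{K}K'$.

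To produce such a $K'$, the key step is to show that the subgroup of $\R_{>0}$ generated by $\btrg{K^\times}$ and the non-zero spectral values of $R$ is a \emph{finite} extension of $\btrg{K^\times}$. For the bound on the exponent, one takes a Noether normalization $T_d\nachinj R$ via \cite[6.1.2]{BGR84}: $R$ is then finite over $T_d$ of some rank $N$, so every $f\in R$ satisfies a monic polynomial of degree at most $N$ over $T_d$, and a Newton-polygon analysis --- using $\btrg{T_d}_{\ssup}\setminus\{0\}=\btrg{K^\times}$ --- yields $\btrg{f}_{\ssup}^{N!}\in\btrg{K^\times}$ for every $0\neq f\in R$. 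For finite generation, one exploits that $R$ is topologically generated by finitely many elements together with an ultrametric bookkeeping argument. Once finiteness is in hand, pick generators $s_1,\dots,s_r$ of the quotient and write $s_i^{n_i}=\btrg{c_i}$ with $c_i\in K^\times$; take $K'$ to be the Galois closure over $K$ of $K(c_1^{1/n_1},\dots,c_r^{1/n_r})$, which is finite Galois (and separable by the perfectness assumption on $K$) and whose value group contains every $s_i$ and hence every $\btrg{f}_{\ssup}$ with $0\neq f\in R$.

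The standard behaviour of spectral seminorms under finite base change then ensures $\btrg{g}_{\ssup}\in\btrg{K'^\times}\cup\{0\}$ for all $g\in R\ctens{K}K'$; combined with the reducedness of $R\ctens{K}K'$, the characterization recalled at the outset shows that $R\ctens{K}K'$ is distinguished. The principal obstacle is the finite-generation claim for the value-group quotient: the exponent bound via Noether normalization is routine, but finite generation itself needs some care in the case where $\btrg{K^\times}$ is not discrete in $\R_{>0}$. The remaining steps --- the construction of $K'$ and the base-change analysis of sup norms --- are formal.
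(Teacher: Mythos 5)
Your approach is entirely different from the paper's, which adapts Bosch's proof of \cite[Lemma~2.7]{Bos71} by replacing the approximation via the algebraic closure of $K$ with one via the \emph{separable} algebraic closure, so that the adjoined elements $c_{i\nu}$ may be taken separable and one can then pass to the Galois closure. Your plan instead tries to control the spectral value group directly, and it has several concrete gaps.

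The characterization you cite from \cite[6.4.3]{BGR84} is not an ``if and only if'' over a general base field: being reduced with $|A|_{\mathrm{sup}}\subseteq|K^\times|\cup\{0\}$ is necessary for $A$ to be distinguished, but the converse requires $K$ to be \emph{stable} in the sense of \cite[3.6.1]{BGR84} (equivalently, a finiteness condition on the reduction $\widetilde A$ as a $\widetilde K$-algebra), and a perfect complete non-archimedean field need not be stable. You also concede that finite generation of the value-group quotient is the principal obstacle, but you supply no argument; the exponent bound from Noether normalization only shows the quotient is torsion of bounded exponent, which does not force finiteness. Finally, even granting $|R|_{\mathrm{sup}}\subseteq|K'^\times|\cup\{0\}$ for your $K'$, the last step is not ``standard'': an arbitrary $g\in R\,\widehat{\otimes}_K K'$ is only known to satisfy $|g|_{\mathrm{sup}}^{N!}\in|K'^\times|$ (by your own exponent argument applied over $K'$), and a finite extension cannot make $|K'^\times|$ $N!$-divisible, so you cannot conclude $|g|_{\mathrm{sup}}\in|K'^\times|$ without further input. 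That circularity --- needing $R\,\widehat{\otimes}_K K'$ to be distinguished in order to control its spectral values, but needing to control those values to show it is distinguished --- is precisely what Bosch's careful approximation argument is designed to break.
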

\begin{proof}
	In  the proof of \cite[Lemma 2.7]{Bos71}, Bosch constructs a finite field extension $K_1/K$ such that the algebra $R \tens{K} K_1$ is distinguished. (Note that $R \tens{K} K_1=R \ctens{K} K_1$ since $K_1$ is finite over $K$.) The field $K_1$ is, in the notation of loc.\ cit., constructed by adjoining certain algebraic elements $c_{i \nu}, i=1, \ldots, n, \nu \in \N_0^m$, only finitely many of which are non-zero, to $K$. The $c_{i \nu}$ are obtained by using the density of the algebraic closure of $K$ in its completion to approximate certain elements $c_{i\nu}'$ of the completion. But, by using the density of the separable algebraic closure of $K$ \cite[3.4.1/Proposition 6]{BGR84}, we can assume that the $c_{i \nu}$ are separable algebraic over $K$ and set $K'$ to be the Galois closure of the finite separable extension $K_1$. Then the rest of the argument in the proof of \cite[Lemma 2.7]{Bos71} can be applied verbatim to show that $R \tens{K} K'$ is distinguished.
\end{proof}
\subsection{Behaviour of formal fibers under base change} 
Recall that, for an affinoid space $Z$ and $z \in Z$, the fibers 
$Z_+(z):=p^{-1}(p(z))$
of the reduction map $p \colon Z \nach \wt Z$ are called \emph{the formal fibers}.
\begin{proposition}\label{faserweise surj}
	Let $R$ be a $K$-affinoid algebra and $K'/K$  a finite Galois extension. Consider $S:=R \tens{K}K'.$ Let $\iota \colon R \nach S$ be the canonical inclusion and 
	$$
	\phi \colon Z'=\Sp(S) \nach Z=\Sp(R)
	$$
	the associated morphism of rigid spaces over $K$, which fits into the commutative diagram
	\begin{equation}\label{faserdiag1}
		\begin{tikzcd}
			Z' \arrow[r, two heads] \arrow[d, "\phi"'] & \wt{Z}' \arrow[d, "\wt{\phi}"] \\
			Z \arrow[r, two heads]                        & \wt Z.    
		\end{tikzcd}
	\end{equation}
	Then the reduction map $Z' \nach \wt{Z}'$ is fiberwise surjective in the sense that for every $z \in Z$, the induced map 
	$$
	\invers{\phi}(z) \nach \invers{\wt{\phi}}(\wt z)
	$$
	is surjective. 
\end{proposition}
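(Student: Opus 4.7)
The plan is to exploit the action of $G := \Gal(K'/K)$ on $S = R \tens{K} K'$ via the second factor. Since $S^G = R$, this induces a $G$-action on $Z'$ for which $\phi$ is invariant and whose orbits are exactly the fibers of $\phi$; in particular $\phi$ is surjective. Because $G$ acts by $K$-algebra automorphisms on $S$, it preserves the power-bounded subring $\oo S$ and the topologically nilpotent ideal $\check S$, and thus descends to a $G$-action on $\wt S$ (and hence on $\wt{Z'}$) under which $\wt\phi$ is invariant. The crucial technical claim from which the proposition will follow is that $G$ acts transitively on each fiber $\invers{\wt\phi}(\wt z)$.

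Granting this transitivity, the proposition follows by a short diagram chase: given $\wt{z'} \in \invers{\wt\phi}(\wt z)$, we pick any $z'_0 \in \invers{\phi}(z)$ (possible since $\phi$ is surjective). Commutativity of \eqref{faserdiag1} forces the reduction of $z'_0$ to lie in $\invers{\wt\phi}(\wt z)$, so by transitivity there exists $\sigma \in G$ carrying it to $\wt{z'}$. Then $z' := \sigma(z'_0)$ satisfies $\phi(z') = \phi(z'_0) = z$ by the $G$-invariance of $\phi$, and its reduction is $\wt{z'}$ by the $G$-equivariance of the reduction map.

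The transitivity itself I would view as an instance of the classical Galois-descent result (e.g.\ Bourbaki, \emph{Alg\`ebre commutative}, Ch.\ V, \S 2, Th\'eor\`eme 2): if $A \nachinj A'$ is an integral extension and $G$ is a finite group of $A$-automorphisms of $A'$ satisfying $(A')^G = A$, then $G$ acts transitively on the primes of $A'$ lying over any prime of $A$. I would apply this with $A := \oo R$ and $A' := \oo S$. Integrality of $\oo R \nachinj \oo S$ holds because every power-bounded element of $S$ is integral over $\oo K$ (via \cite[6.2.3/1]{BGR84} applied after killing nilpotents and then lifting the resulting monic relation through the nilradical), and hence integral over $\oo R$. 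The identity $\oo S^G = \oo R$ reduces to $\oo R = \oo S \cap R$, which in turn uses that the supremum seminorm on $R$ coincides with its restriction from the supremum seminorm on $S$ — a consequence of the surjectivity of $\phi$. Since $G$ preserves $\check S$, the resulting transitivity on primes of $\oo S$ over a prime of $\oo R$ restricts to the maximal ideals containing $\check S$ over those containing $\check R$, which are exactly the points of $\wt{Z'}$ over $\wt z$.

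The main anticipated obstacle is the careful verification that $\oo R \nachinj \oo S$ is integral when $R$ is not a priori reduced; once that is in place, the rest is a direct application of classical commutative algebra together with the $G$-equivariance of the reduction map.
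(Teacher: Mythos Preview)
Your argument is correct and takes a genuinely different route from the paper's. The paper argues by contradiction: assuming some $\wt{z'}_{n+1}\in\invers{\wt\phi}(\wt z)$ is not hit by the reductions of $\invers{\phi}(z)=\{z'_1,\dots,z'_n\}$, it uses the Chinese Remainder Theorem to pick $g\in\oo S$ with $|g(z'_j)|=1$ for $j\le n$ and $|g(z'_{n+1})|<1$, and then forms the Galois norm $f:=\prod_{\sigma\in G}\sigma(g)\in S^G=R$. A short computation shows $f\in\oo R$, $|f(z)|=1$, yet $|f(\phi(z'_{n+1}))|<1$, contradicting $\wt{\phi(z'_{n+1})}=\wt z$. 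Your approach instead packages the same Galois symmetry into the classical Bourbaki transitivity theorem applied to $\oo R\hookrightarrow\oo S$, and then finishes with a clean $G$-equivariant diagram chase. The paper's method is elementary and self-contained (no appeal to an outside theorem), while yours is more conceptual and makes the role of the Galois action structurally transparent; both ultimately rest on the identities $S^G=R$ and $\oo S\cap R=\oo R$ (the latter via the isometry of $|\cdot|_{\ssup}$ under the finite injection $R\hookrightarrow S$, \cite[3.8.1/Lemma~6]{BGR84}).

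One minor simplification of your write-up: once you have $\oo S^{G}=\oo R$, the integrality of $\oo R\hookrightarrow\oo S$ is automatic, since any $s\in\oo S$ is a root of $\prod_{\sigma\in G}(X-\sigma(s))$, whose coefficients are $G$-invariant elements of $\oo S$ (each $\sigma$ being a $|\cdot|_{\ssup}$-isometry) and hence lie in $\oo R$. So the detour through reducedness and \cite[6.2.3/1]{BGR84} is unnecessary, and the ``main anticipated obstacle'' you flag in fact dissolves.
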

\begin{proof}
	The morphism $\phi$ is finite, so $\faserphi z$ is a finite set, say
	$\faserphi z=\{z_1', \ldots, z_n'\}.$
	Then $\wt \phi$ is also finite by \cite[6.3.5/Theorem 1]{BGR84}, so the complement $\faserphit{\wt z} \setminus \{\wt z_1', \ldots, \wt z_n'\}$ is a finite set and we have to show that it is in fact empty. Suppose that it is non-empty, say
	$$\faserphit{\wt z} \setminus \{\wt z_1', \ldots, \wt z_n'\}=\{\wt z_{n+1}', \ldots, \wt z_s'\}$$
	for some elements $\wt z_{n+1}', \ldots, \wt z_s' \in \wt Z'$ with lifts $z_{n+1}', \ldots, z_s' \in Z'$. Based upon this assumption, we will construct an $f \in \oo R$ such that
	$$|f(z)|=1 \quad \tn{ and } \quad |f(\phi(z_{n+1}'))|<1.$$ 
	But then, by \cite[7.1.5/Proposition 2]{BGR84}, $|f(z)|=1$ means that $\wt f(\wt z) \neq 0$ whereas $|f(\phi(z_{n+1}'))|<1$ means that $\wt f(\wt z) = 0$ since $\phi(z_{n+1}')$ also reduces to $\wt z$. Thus we arrive at a contradiction and the assertion is proved. It remains to construct such an $f \in \oo R$. For this, we first choose a $\wt g \in \wt S$ such that 
	\begin{equation}\label{eig g tilde}
		\wt g(\wt z_j')=1 \tn{ for } j=1, \ldots n \quad \tn{ and } \quad \wt g(\wt z_{n+1}')=0.
	\end{equation}
	This is possible since
	$
	\wt S \nach \prod_{j=1}^{n+1} \wt S/\mf m_{\wt z_j'}
	$
	is surjective by the Chinese Remainder Theorem. Then we choose a lift $g \in \oo S$ of $\wt g$. 
	Note that (\ref{eig g tilde}) then yields 
	\begin{equation}\label{eig g}
		|g( z_j')|=1 \tn{ for } j=1, \ldots n \quad \tn{ and } \quad |g( z_{n+1}')|<1
	\end{equation}
	by \cite[7.1.5/Proposition 2]{BGR84}. Next, the Galois group $G=\Gal(K'/K)$ acts in an obvious way on $R \tens{K} K'=S$ by $R$-algebra homomorphisms, and it is easy to prove that the fixed elements satisfy $S^G=R.$ Therefore,
	$$f:=\prod_{\sigma \in G} \sigma(g) \in R.$$
	Moreover, any morphism $S \nach S$ is contractive with respect to $|\cdot|_{\ssup}$ by  \cite[3.8.1/Lemma 4]{BGR84}, so in particular we have $\supr{\sigma(g)} \leq \supr{g} \leq 1$ for all $\sigma \in G$. Hence
	\begin{equation}\label{ungl galois}
		\supr{f} \leq \prod_{\sigma \in G} \supr{\sigma(g)} \leq 1.
	\end{equation}
	
	Since $\iota \colon R \nach S$ is finite, it is an isometry with respect to $\supr{\cdot}$ by \cite[3.8.1/Lemma 6]{BGR84}, i.e.\ the $\supr{\cdot}$ of $S$ restricts to the $\supr{\cdot}$ of $R$. Thus the inequality (\ref{ungl galois}) shows that  in fact $f \in \oo R.$ Next we claim that 
	\begin{equation}\label{betr gl galois}
		|\sigma(g)(z_1')|=1 \quad \tn{ for all } \sigma \in  G.
	\end{equation}
	To see this, note that each $\sigma \in G$ permutes $\{\mf m_{z_1'}, \ldots, \mf m_{z_n'}\}$ so $\invers \sigma(\mf m_{z_1'})=\mf m_{z_j'}$ for some \linebreak $j \in \{1, \ldots n\}$ and thus $\sigma$ induces an isomorphism 
	$
	\sigma \colon S/\mf m_{z_j'} \isoauf S/\mf m_{z_1'}
	$
	mapping $g \tn{ mod } \mf m_{z_j'}$ to $\sigma(g) \tn{ mod } \mf m_{z_1'}$, which means that $|g(z_j')|=|\sigma(g)(z_1')|$. Since $|g(z_j')|=1$ by (\ref{eig g}), this proves (\ref{betr gl galois}). Finally, we compute
	\begin{align*}
		|f(z)|&=|f(\phi(z_1'))| 
		= |\iota(f)(z_1')| 
		= |\prod_\sigma \sigma(g) (z_1')|
		= \prod_\sigma |\sigma(g) (z_1')|
		=\prod_\sigma 1 
		= 1
	\end{align*}
	and similarly
	$$
	|f(\phi(z_{n+1}'))| = \prod_\sigma |\sigma(g) (z_{n+1}')| 
	= |g(z_{n+1}')|\cdot \prod_{\sigma \neq \id} |\sigma(g) (z_{n+1}')| 
	<1
	$$
	since $|g(z_{n+1}')|<1$ and  $|\sigma(g) (z_{n+1}')| \leq 1$.
\end{proof}
\begin{corollary}\label{faserbeziehung}
	Let $Z$ be a $K$-affinoid space and $K'/K$  a finite Galois extension. Consider the base change  $Z':=Z \tens{K}K'$ of $Z$ to $K'$ and 
	the associated morphism 
	$
	\phi \colon Z' \nach Z
	$ 
	of rigid spaces. Consider any $z \in Z$, and let $\{z_1', \ldots, z_n'\}=\faserphi z$. Then we have the following relation between formal fibers
	\begin{align}\label{fasergl}
		Z_+(z)= \bigcup_{i=1}^n \phi(Z'_+(z_i')).
	\end{align}
\end{corollary}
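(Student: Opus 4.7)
The strategy is to prove the set-theoretic identity (\ref{fasergl}) by a short diagram chase in (\ref{faserdiag1}) combined with the fiberwise surjectivity provided by Proposition \ref{faserweise surj}. Since $K'/K$ is finite, $S = R \tens{K} K'$ is a free (hence faithfully flat) $R$-module, so $\phi \colon Z' \to Z$ is a finite surjection on points. Applying Proposition \ref{faserweise surj} to the point $z$, the induced map $\faserphi{z} \to \faserphit{\wt z}$ is surjective; together with the hypothesis $\faserphi{z} = \{z_1',\ldots,z_n'\}$ this pins down
$$\faserphit{\wt z} = \{p'(z_1'),\ldots,p'(z_n')\},$$
where $p' \colon Z' \to \wt{Z}'$ denotes the reduction map. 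This is the single substantive input I will use; the rest is bookkeeping.

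For the inclusion $\supseteq$, I would take $w \in \phi(Z'_+(z_i'))$, write $w = \phi(w')$ with $p'(w') = p'(z_i')$, and use commutativity of (\ref{faserdiag1}) to compute
$$p(w) = \wt{\phi}(p'(w')) = \wt{\phi}(p'(z_i')) = p(\phi(z_i')) = p(z),$$
so $w \in Z_+(z)$. For the reverse inclusion, given $w \in Z_+(z)$ I would use surjectivity of $\phi$ to pick $w' \in Z'$ with $\phi(w') = w$. Commutativity of (\ref{faserdiag1}) yields $\wt{\phi}(p'(w')) = p(w) = \wt z$, so $p'(w') \in \faserphit{\wt z} = \{p'(z_1'), \ldots, p'(z_n')\}$ by the identification above. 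Thus $p'(w') = p'(z_i')$ for some $i$, which means $w' \in Z'_+(z_i')$ and hence $w = \phi(w') \in \phi(Z'_+(z_i'))$.

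I do not foresee any real obstacle: the corollary is essentially a reformulation of Proposition \ref{faserweise surj} in the language of formal fibers. The only point that requires a sentence of justification is the surjectivity of $\phi$ on points, which is immediate from the faithful flatness of $R \to R \tens{K} K'$ induced by the finiteness of $K'/K$.
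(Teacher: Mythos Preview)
Your proof is correct and follows essentially the same approach as the paper: both prove the two inclusions by a diagram chase in (\ref{faserdiag1}), invoking Proposition \ref{faserweise surj} for the key identification $\faserphit{\wt z}=\{\wt z_1',\ldots,\wt z_n'\}$ needed in the $\subseteq$ direction. You spell out the easy inclusion and the surjectivity of $\phi$ in slightly more detail than the paper does, but the argument is the same.
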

\begin{proof}
	The inclusion \quot{$\supseteq$} in (\ref{fasergl}) is clear due to the commutativity of the diagram (\ref{faserdiag1}).
	To show the reverse inclusion, let $y \in Z_+(z)$ and take a preimage $y' \in Z'$ under $\phi$. We need to show that $y' \in Z'_+(z_i')$ for some $i$. By the commutativity of (\ref{faserdiag1}) and since $y \in Z_+(z)$, we find that $\wt{\phi}(\wt y')= \wt y = \wt z$, so $\wt y' \in \faserphit{\wt z}$. Now $\faserphit{\wt z}=\{\wt z_1', \ldots, \wt z_n'\}$ by Proposition \ref{faserweise surj}, whence $\wt y'=\wt z_i'$ for some $i$, i.e. $y' \in Z'_+(z_i')$. 
\end{proof}
\subsection{The generalized connectedness result}
If $R$ is a $K$-affinoid algebra, then $R$ is Noetherian and hence contains only finitely many minimal prime ideals, say $\mf p_1, \ldots, \mf p_s$. As in \cite[p.\ 6]{Bos70}, we say that $R$ \emph{has pure dimension} (or \emph{is equidimensional}) if
$
\dim R/\mf p_1 = \dim R/\mf p_2= \ldots = \dim R/ \mf p_s,
$
or, equivalently, 
$
\dim R = \dim R/\mf p_1 = \dim R/\mf p_2= \ldots = \dim R/ \mf p_s.
$
The following Theorem \ref{bosch faser zshgd}, which is due to Bosch, is an important technical result concerning the connectedness of formal fibers:
\begin{theorem}[\!\!\!{\cite[Satz 6.1]{Bos77}}]\label{bosch faser zshgd}
	Let $R$ be a distinguished $K$-affinoid algebra which has pure dimension and let $Z=\Sp(R)$. Then, for every $z \in Z$, the formal fiber $Z_+(z)=\invers{p}(p(z))$ of the reduction map $p \colon Z \nach \wt Z$ is connected.
\end{theorem}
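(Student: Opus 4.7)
The plan is to leverage the distinguished epimorphism $\alpha\colon T_m \nachsurj R$ to realize $Z = \Sp(R)$ as a closed analytic subspace of the unit ball $\mathbb{B}^m_K$, with reduction $\wt Z$ sitting as a closed subvariety of $\mathbb{A}^m_k$. Because $\alpha$ is distinguished, the reduction map $p \colon Z \nach \wt Z$ is the restriction of the reduction $\mathbb{B}^m \nach \mathbb{A}^m_k$, so the formal fiber $Z_+(z)$ becomes the intersection of $Z$ with the formal fiber $\mathbb{B}^m_+(z)$ in the ball. The latter is an open polydisc around any lift of $\wt z$ and is therefore connected; the entire question is whether intersecting it with the closed analytic set cut out by $\ker\alpha$ can fracture it.

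I would first reduce to the case where $R$ is an integral domain. If $\mf{p}_1, \ldots, \mf{p}_s$ are the minimal primes of $R$, then $Z = \bigcup_i Z_i$ with $Z_i := \Sp(R/\mf{p}_i)$, and one verifies $Z_+(z) = \bigcup_{z \in Z_i} (Z_i)_+(z)$, a union of analytic subsets each containing the common point $z$. The pure-dimension hypothesis gives $\dim R/\mf{p}_i = \dim R$ for all $i$, and one checks that, after possibly adjusting the presentation, the induced epimorphism $T_m \nachsurj R/\mf{p}_i$ is again distinguished; thus each $Z_i$ satisfies the hypothesis of the theorem, and connectedness for each summand implies connectedness of the union.

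For $R$ a distinguished integral domain, I would study global analytic functions on $Z_+(z)$ and rule out nontrivial idempotents. The formal fiber admits a natural description as the rigid generic fiber of the formal completion of the admissible $\mf{o}_K$-algebra $\oo R$ at the maximal ideal $\mf{m} \subset \oo R$ lying above $\wt z$, so its analytic functions identify with $\wh{(\oo R)_{\mf{m}}} \otimes_{\mf{o}_K} K$, and connected components correspond to idempotents of the complete local ring $\wh{(\oo R)_{\mf{m}}}$. The main obstacle is then to show that this complete Noetherian local ring has no nontrivial idempotents: a Hartshorne-style connectedness statement, which I expect to follow from equidimensionality (inherited from pure dimension of $R$) via an induction on $d = \dim R$ using hypersurface sections extracted from the distinguished presentation, with the base case $d = 0$ being trivial since then $Z_+(z) = \{z\}$.
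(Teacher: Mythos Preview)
The paper does not prove this theorem at all: it is quoted verbatim as Bosch's result \cite[Satz 6.1]{Bos77} and used as a black box. The paper's contribution is the \emph{generalization} in Theorem~\ref{thm faser zshgd im allg}, whose proof reduces to Bosch's theorem via a Galois descent argument (Proposition~\ref{faserweise surj} and Corollary~\ref{faserbeziehung}). So there is no proof in the paper to compare your attempt against.

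That said, your outline has two substantive gaps worth flagging. First, in the reduction to integral domains you assert that the quotients $R/\mf{p}_i$ are again distinguished ``after possibly adjusting the presentation''; this is not automatic, and more seriously, the formal fiber $(Z_i)_+(z)$ is defined via the reduction map of $Z_i$, which need not coincide with the restriction of the reduction map of $Z$ unless you verify that $\wt{R/\mf{p}_i}$ sits inside $\wt R$ compatibly. Second, and more importantly, the entire content of the theorem is concentrated in your final paragraph, where you reduce to showing that $\wh{(\oo R)_{\mf m}}$ has no nontrivial idempotents and then invoke an unspecified ``Hartshorne-style connectedness statement'' to be proved by an induction you do not carry out. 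This is exactly the hard part: Bosch's original argument is a genuine piece of work involving the structure theory of distinguished algebras and their reductions, and nothing in your sketch indicates how equidimensionality alone forces the completed local ring to be connected. As written, the proposal identifies a reasonable target but does not supply the key idea that would hit it.
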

We generalize Bosch's theorem to the case of a not necessarily distinguished affinoid algebra:
\begin{theorem}\label{thm faser zshgd im allg}
	Let $R$ be a $K$-affinoid algebra which has pure dimension and let $Z=\Sp(R)$. Suppose that there exists a finite Galois extension $K'/K$ such that $S:=R \tens{K}K'$ is distinguished. Then, for every $z \in Z$, the formal fiber  $Z_+(z)=\invers{p}(p(z))$ of the reduction map $p \colon Z \nach \wt Z$ is connected.
\end{theorem}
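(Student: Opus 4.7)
The plan is to derive this from Bosch's theorem (Theorem \ref{bosch faser zshgd}) applied to the distinguished algebra $S=R\tens{K}K'$, combined with the fiber description already worked out in Corollary \ref{faserbeziehung}.

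As a preliminary step I would verify that $S$ is itself equidimensional of dimension $\dim R$, so that Theorem \ref{bosch faser zshgd} is applicable to $S$. Since $K'/K$ is finite, $R\nach S$ is a finite, faithfully flat ring extension; by going up/going down the minimal primes of $S$ contract to the minimal primes of $R$, and along an integral extension $\dim S/\mf q=\dim R/\mf p$ whenever $\mf q\cap R=\mf p$. Hence $S$ has pure dimension $\dim R$, and Bosch's theorem yields that each formal fiber $Z_+'(z')\subseteq Z'$ of the reduction of $Z'$ is connected.

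Now fix $z\in Z$ and write $\faserphi{z}=\{z_1',\ldots,z_n'\}$. Combining Proposition \ref{faserweise surj} with the commutativity of \eqref{faserdiag1}, a point $y'\in Z'$ satisfies $\phi(y')\in Z_+(z)$ iff $\wt{\phi}(\wt{y'})=\wt{z}$ iff $\wt{y'}\in\{\wt{z}_1',\ldots,\wt{z}_n'\}$, so
\[
\faserphi{Z_+(z)}\;=\;\bigsqcup_{i=1}^n Z_+'(z_i'),
\]
a \emph{disjoint} union of connected admissible opens of $Z'$, each containing the corresponding $z_i'\in\faserphi{z}$. To conclude, suppose for contradiction that $Z_+(z)=U\sqcup V$ decomposes into two non-empty disjoint clopen admissible opens, and without loss of generality $z\in U$. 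Then $\faserphi{U}\sqcup\faserphi{V}$ partitions the disjoint union displayed above into clopen admissible pieces, so each connected summand $Z_+'(z_i')$ must lie entirely in one of them; since $z_i'\in\faserphi{z}\subseteq\faserphi{U}$, in fact $Z_+'(z_i')\subseteq\faserphi{U}$ for every $i$. Applying $\phi$ and invoking Corollary \ref{faserbeziehung} gives $Z_+(z)\subseteq U$, contradicting $V\neq\emptyset$.

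The only substantive step is the preservation of pure dimension under finite Galois base change — a routine commutative-algebra check via faithful flatness and integrality. The rest is formal bookkeeping in the rigid Grothendieck topology, unproblematic because $\phi$ is finite, flat and surjective and the formal fibers are admissible open in the sense of \cite{Bos77}.
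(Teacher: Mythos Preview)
Your argument is correct and reaches the same conclusion as the paper, but the route is genuinely different. The paper pushes \emph{forward}: it shows that each image $\phi(Z'_+(z_i'))$ is open (using that flat maps between quasi-compact rigid spaces are open, \cite[Corollary 5.11]{BL93}) and closed (using Kiehl's proper mapping theorem \cite{Kie67a}), hence a clopen connected subset of $Z_+(z)$; since all of these contain $z$, they coincide, and Corollary \ref{faserbeziehung} then gives $Z_+(z)=\phi(Z'_+(z_1'))$. You instead pull \emph{back}: a hypothetical clopen decomposition of $Z_+(z)$ is lifted along $\phi$ to a clopen decomposition of $\invers{\phi}(Z_+(z))=\bigcup_i Z'_+(z_i')$, and connectedness upstairs together with $z_i'\in\invers{\phi}(z)\subseteq\invers{\phi}(U)$ forces one side to be empty. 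Your approach is more elementary in that it avoids both the Bosch--L\"utkebohmert openness theorem and Kiehl's properness result; all you need is that morphisms of rigid spaces pull back admissible opens and admissible covers, plus the surjectivity of $\phi$. One small imprecision: the union $\bigcup_i Z'_+(z_i')$ need not be literally disjoint, since distinct $z_i'$ may share the same reduction (Proposition \ref{faserweise surj} gives surjectivity of $\invers{\phi}(z)\to\invers{\wt\phi}(\wt z)$, not injectivity), but coinciding fibers are equal as sets, so this does not affect the argument. For the pure-dimension step, the paper simply cites \cite[Lemma 2.5]{Bos70} rather than rederiving it; your commutative-algebra sketch is fine but the citation is cleaner.
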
 
We postpone the proof for a moment to note that the condition regarding the existence of $K'$ is satisfied whenever $R$ is absolutely reduced, by Lemma \ref{Erw nach der wir disting sind}. In particular, since $R$ is absolutely reduced whenever $Z=\Sp(R)$ is smooth, we deduce:
\begin{corollary}\label{cor faser zshgd}
	Let $R$ be a $K$-affinoid algebra such that $Z=\Sp(R)$ is smooth and connected. Then, for every $z \in Z$, the formal fiber $Z_+(z)=\invers{p}(p(z))$ of the reduction map $p \colon Z \nach \wt Z$ is connected.
\end{corollary}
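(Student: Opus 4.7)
The plan is to deduce Corollary \ref{cor faser zshgd} from Theorem \ref{thm faser zshgd im allg}, whose two hypotheses must be verified: that $R$ has pure dimension, and that there exists a finite Galois extension $K'/K$ making $R \tens{K} K'$ distinguished. The second condition is supplied by the remark preceding the corollary, since smoothness of $Z$ forces $R$ to be absolutely reduced --- indeed, for any complete extension $K'/K$ the base change $Z \ctens{K} K'$ is again smooth, hence regular, hence reduced --- and Lemma \ref{Erw nach der wir disting sind} then produces the required finite Galois $K'/K$.

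To verify the pure-dimension condition, I would argue the stronger statement that $R$ is an integral domain. Smoothness implies that each localization $R_{\mf m}$ at a maximal ideal $\mf m$ of $R$ is regular, hence an integral domain; consequently each $\mf m$ contains a unique minimal prime of $R$. Letting $\mf p_1, \ldots, \mf p_s$ be the minimal primes of $R$ and $Z_i := V(\mf p_i) \subseteq Z$ the corresponding irreducible components, the uniqueness just noted forces $Z_i \cap Z_j = \emptyset$ for $i \neq j$: a point in such an intersection would contain both $\mf p_i$ and $\mf p_j$, contradicting uniqueness. Thus $Z = Z_1 \sqcup \ldots \sqcup Z_s$ is a decomposition into pairwise disjoint analytic subsets, each of which is therefore clopen in the sense of the introduction. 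The connectedness hypothesis then forces $s = 1$, so $R$ is a domain and in particular has pure dimension.

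With both hypotheses verified, Theorem \ref{thm faser zshgd im allg} immediately yields the desired connectedness of the formal fiber $Z_+(z) = \invers{p}(p(z))$. The main technical point is the stability of smoothness under completed base change that underpins the absolute-reducedness step; once this is granted, the descent from regularity of local rings to the global domain property via connectedness is a routine irreducible-components argument.
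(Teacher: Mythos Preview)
Your proof is correct and follows essentially the same approach as the paper: both deduce the corollary from Theorem \ref{thm faser zshgd im allg} by noting that smoothness implies absolute reducedness (via stability of smoothness under base change), whence Lemma \ref{Erw nach der wir disting sind} applies. The paper's deduction is terser and leaves the pure-dimension hypothesis implicit, whereas you spell out the standard argument that smooth and connected implies $R$ is a domain; this extra detail is sound and fills in what the paper takes for granted.
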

\begin{proof}[Proof of Theorem \ref{thm faser zshgd im allg}]
	Let $\iota \colon R \nach S$ be the canonical inclusion and  
	$
	\phi \colon Z'=\Sp(S) \nach Z=\Sp(R)
	$
	the associated morphism of rigid spaces, which fits into the commutative diagram
	\begin{equation}\label{diag bw red}
		\begin{tikzcd}
			Z' \arrow[r, two heads] \arrow[d, "\phi"'] & \wt{Z}' \arrow[d, "\wt{\phi}"] \\
			Z \arrow[r, two heads]                        & \wt Z.    
		\end{tikzcd}
	\end{equation}
	The morphism $\iota \colon R \nach S$ is finite  flat and injective, so $\phi$ is finite flat and surjective. Since the morphism $\phi$ is finite, $\faserphi z$ is a finite set, say
	$$\faserphi z=\{z_1', \ldots, z_n'\}.$$
	Since $R$ has pure dimension, the base change $S=R \tens{K} K'$ also has pure dimension by \cite[Lemma 2.5]{Bos70}. Because $S$ is moreover distinguished, the formal fibers $Z'_+(z_i'), i=1, \ldots, n$ are connected by Theorem \ref{bosch faser zshgd}. Recall that formal fibers are  admissible open in the ambient affinoid space (as is explained in \cite[page 26]{Bos77}), so $Z'_+(z_i')$ is open in $Z'$. Being a flat map between quasi-compact rigid $K$-spaces, $\phi$ is open by \cite[Corollary 5.11]{BL93}, so $\phi(Z'_+(z_i'))$ is open in $Z$ and hence a rigid (sub)space. In particular, the restriction $Z'_+(z_i') \nachsurj \phi(Z'_+(z_i'))$ is a surjective map of rigid spaces whose domain is connected, whence the codomain $\phi(Z'_+(z_i'))$ is also connected.
	On the other hand, $\phi$ is finite and hence proper, so $\phi$ maps closed analytic subsets to closed analytic subsets by \cite[Satz 4.1 and its proof]{Kie67a}, which is why $\phi(Z'_+(z_i'))$ is clopen in $Z$. Due to the commutativity of the diagram (\ref{diag bw red}) above, $\phi(Z'_+(z_i'))$ is contained in $Z_+(z)$, i.e.\ we can regard it as a (clopen) subset of $Z_+(z)$.  Therefore, being clopen and connected, each $\phi(Z'_+(z_i'))$ is a connected component of $Z_+(z)$. On the other hand, 
	\begin{align}\label{faservereinigung}
		Z_+(z)= \bigcup_{i=1}^n \phi(Z'_+(z_i')) 
	\end{align}
	by Corollary \ref{faserbeziehung}. 
	Next we will show that $\phi(Z'_+(z_i'))=\phi(Z'_+(z_1'))$ for all $i=1, \ldots, n$, which then implies that $Z_+(z)=\phi(Z'_+(z_1'))$ due to (\ref{faservereinigung}) and thus completes the proof of the theorem. Since $\phi(z_i')=z=\phi(z_1')$, we conclude that $z \in \phi(Z'_+(z_1')) \cap \phi(Z'_+(z_i'))$, so $\phi(Z'_+(z_1')) \cap \phi(Z'_+(z_i')) \neq \varnothing$ for all $i$ which means that the connected components $\phi(Z'_+(z_1'))$ and $\phi(Z'_+(z_i'))$ must coincide. 
\end{proof}

\section{Review of Beyer's residue maps and trace maps} \label{sec: gt}

In this section we summarize Beyer's main results and in particular his construction of the trace map yielding Serre duality. He first constructs the trace map in the special case of so-called \emph{special affinoid wide-open spaces} (which we review in Subsection \ref{subsec sp wid op} below), then he uses the fact that a Stein space
can be exhausted by subspaces of this type. To prove that the trace maps in the covering
are compatible and thus glue to a global trace map, he uses a relation to the local
cohomology groups of the underlying affine schemes (which are defined as in, say, \cite[\S 3.1]{Bey97a} or, more generally, in \cite{Hun07}). This relation to local cohomology is reviewed in {Subsection} \ref{subsec relation} below and used to prove the main result of Section \ref{sec: rel trace}, i.e.\ Theorem \ref{introthm1} from the introduction.

\subsection{Special affinoid wide-open spaces}\label{subsec sp wid op}
\begin{definition}[\!\!{\cite[Definition 4.2.1]{Bey97a}}]\label{def spec wide op}
	Let $Z$ be an affinoid space. A subset $\mathring{W}\subseteq Z$ is called a {\emph{special affinoid wide-open space}} if 
	$\mathring W$ is the preimage of a finite set of points under the reduction map $p \colon Z \nach \widetilde{Z}$.
\end{definition}
We note that a special affinoid wide-open space need not be affinoid. For example, $\Dnull^n$ with $n>0$ is a special affinoid wide-open space that isn't affinoid. In fact, one can show that an affinoid space which is special affinoid wide-open is necessarily zero-dimensional. 

There is a gap in the proof of the crucial result \cite[Lemma 4.2.2]{Bey97a} on equivalent characterizations of special affinoid wide-opens: Bosch's theorem on the connectedness of formal fibers  of distinguished affinoid algebras (Theorem \ref{bosch faser zshgd})  is applied to a not necessarily distinguished affinoid algebra. Our generalization of Bosch's Theorem ({Corollary} \ref{cor faser zshgd}) can be used to remedy this gap, as we explain in the proof of Lemma \ref{eine implikation fuer spec wide op} below.
\begin{lemma}\label{eine implikation fuer spec wide op}
	Let $Z=\Sp(R)$ be a smooth and connected affinoid space, $p \colon Z \nach \widetilde{Z}$ the reduction map and let $\mathring{W}\subseteq Z$ be a special affinoid wide-open space. 
	Then there exists a finite surjective morphism $\pi \colon Z \nach \mb D^d$ such that $\oo{W}$ is a  union of connected components of $\pi^{-1}(\mathring{\mb D}^d)$. For any such morphism $\pi$, $\pi^{-1}(\mathring{\mb D}^d)$ consists of only finitely many connected components.  \\
	More precisely: For $\oo W=p^{-1}(\{\widetilde{z_1}, \ldots, \widetilde{z_r}\})$, there exists a finite surjective morphism \allowbreak $\wt{\pi} \colon \wt{Z} \allowbreak \nach \mb A^d_{k}$ that maps all the $\widetilde{z_i}$ to $0$, and any lift $\pi$ of any such $\wt \pi$ satisfies the desired properties above. In this setting, if $\invers{\widetilde{\pi}}(0)=\{\widetilde{z_1}, \ldots, \widetilde{z_r}, \widetilde{z}_{r+1}, \ldots, \widetilde{z_s}\}$ is the full fiber over zero, then the connected components of $\pi^{-1}(\Dnull^d)$ are precisely the  $\invers{p}(\widetilde{z_1}), \ldots, \invers{p}(\widetilde{z_s})$ and $\mathring{W}$ is the union of the components $\invers{p}(\widetilde{z_1}), \ldots, \invers{p}(\widetilde{z_r})$.
\end{lemma}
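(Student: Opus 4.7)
The plan is to construct $\pi$ by first producing its reduction $\wt\pi \colon \wt Z \to \mathbb{A}^d_k$ and then lifting, after which the generalized Bosch theorem (Corollary \ref{cor faser zshgd}) identifies the connected components of $\pi^{-1}(\mathring{\mb D}^d)$ with formal fibers. Since $Z$ is smooth and connected, $\wt Z$ is an equidimensional affine $k$-variety of dimension $d$. Applying a standard variant of Noether normalization to the ideal $I := \bigcap_{i=1}^r \mf m_{\wt z_i}$ of $\wt R$ yields $d$ algebraically independent elements $\wt f_1, \ldots, \wt f_d \in I$ making $\wt R$ finite over $k[\wt f_1, \ldots, \wt f_d]$ (possible because the closed subset $\{\wt z_1, \ldots, \wt z_r\}$ has codimension $d$). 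The corresponding $\wt\pi$ is finite and surjective and maps every $\wt z_i$ to $0$; enumerate $\wt\pi^{-1}(0) = \{\wt z_1, \ldots, \wt z_s\}$.

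Next, I would lift $\wt\pi$ by choosing any $f_i \in \mathring R$ reducing to $\wt f_i$, obtaining $\pi \colon Z \to \mb D^d$. A standard Nakayama-type argument for affinoid algebras shows that module-finiteness of the reduction transfers to $\pi$ itself; algebraic independence of the $\wt f_i$ transfers to the $f_i$ via a reduction-of-norms argument, giving $T_d \hookrightarrow R$ and hence surjectivity of $\pi$ by going-up for integral extensions. Using the equivalence $|f(x)| < 1 \Leftrightarrow \wt f(\wt x) = 0$ for $f \in \mathring R$ (\cite[7.1.5/Proposition 2]{BGR84}), one then computes
$$
\pi^{-1}(\mathring{\mb D}^d) \;=\; \{x \in Z : |f_i(x)| < 1 \text{ for all } i\} \;=\; p^{-1}(\wt\pi^{-1}(0)) \;=\; \bigsqcup_{i=1}^{s} p^{-1}(\wt z_i),
$$
a finite disjoint union of formal fibers.

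By Corollary \ref{cor faser zshgd}, each $p^{-1}(\wt z_i)$ is connected, and each is admissible open in $Z$. As they are finitely many pairwise disjoint admissible opens exhausting $\pi^{-1}(\mathring{\mb D}^d)$, they are clopen therein and hence are precisely its connected components. Thus $\oo W = \bigsqcup_{i=1}^r p^{-1}(\wt z_i)$ is the union of $r$ of them, and the same reasoning applied to the reduction $\wt{\pi'}$ of any other finite surjective $\pi'$ with the stated property shows that $(\pi')^{-1}(\mathring{\mb D}^d)$ decomposes into finitely many components. The main technical point is the Noether normalization step with prescribed vanishing at $\{\wt z_1, \ldots, \wt z_r\}$, which is classical for equidimensional affine varieties; the real substance of the lemma lies in the appeal to Corollary \ref{cor faser zshgd} to ensure connectedness of the formal fibers, precisely the place where Beyer's original argument had a gap.
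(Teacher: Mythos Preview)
Your proposal is correct and follows the same overall architecture as the paper's proof: produce a finite surjective $\pi$ whose reduction sends the $\wt z_i$ to $0$, use the commutative square between the reduction maps on $Z$ and $\mb D^d$ to identify $\pi^{-1}(\mathring{\mb D}^d)$ with the disjoint union of the formal fibers $p^{-1}(\wt z_i)$, and then invoke Corollary~\ref{cor faser zshgd} to conclude that each such fiber is connected. The crucial appeal to the generalized Bosch result is exactly where the paper places it.

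The one genuine difference is in how $\pi$ is manufactured. The paper works at the rigid level first: it quotes \cite[Lemma 1.1.4]{BKKN67} to obtain directly a finite surjective $\pi\colon Z\to\mb D^d$ with $\pi(z_i)=0$, and then takes its reduction to get $\wt\pi$; the lifting direction (``any lift of any such $\wt\pi$ is finite surjective'') is then dispatched by citing \cite[\S 6.3]{BGR84}. You instead build $\wt\pi$ first via Noether normalization on $\wt R$ with prescribed vanishing at $\{\wt z_1,\ldots,\wt z_r\}$, and then lift by hand. This is a legitimate alternative, but it carries two small burdens the paper's route avoids: you should justify that $\wt R$ is equidimensional of dimension $d$ (this does follow, e.g.\ from the existence of a finite injective $T_d\hookrightarrow R$ and \cite[6.3.4]{BGR84}, but is not automatic from smoothness of $Z$ alone), and the ``standard variant of Noether normalization'' with prescribed vanishing over a possibly finite residue field $k$ deserves a reference or a word of care, since the usual generic-linear-change-of-coordinates argument assumes $k$ infinite. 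None of this is fatal, but the paper's direct invocation of \cite{BKKN67} is cleaner precisely because it sidesteps both points.
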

\begin{proof}
	This is the content of \cite[Lemma 4.2.2]{Bey97a}, but we give an outline of the argument to illustrate where our generalized connectedness theorem plays a role.  By \cite[Lemma 1.1.4]{BKKN67}, we can choose {a finite surjective morphism $\pi \colon Z \nach \D^d$ such that $\pi(z_i)=0$ for all $i=1, \ldots, n$}. Taking its reduction, we obtain a $\wt \pi$ as in the assertion.  
	On the other hand, given any $\wt \pi$ as in the assertion,  then any lift $\pi$ of $\wt \pi$ is finite surjective by the results of \cite[\S 6.3]{BGR84}. Moreover, let $\widetilde{z}_{r+1}, \ldots, \widetilde{z_s} \in \widetilde{Z}$ be such that $\invers{\widetilde{\pi}}(0)=\{\widetilde{z_1}, \ldots, \widetilde{z_r}, \widetilde{z}_{r+1}, \ldots, \widetilde{z_s}\}$. Then, due to the commutativity of 
	\begin{equation*}
		\begin{tikzcd}
			Z \arrow[d, "p"] \arrow[rr, "\pi"] &  & \D^d \arrow[d, "p"] \\
			\wt Z \arrow[rr, "\wt \pi"]        &  & \mb A^d_k     
		\end{tikzcd}
	\end{equation*}
	and the fact that $\Dnull^d$ is the fiber over $0 \in \mb A^d_k$, we see that $\pi^{-1}(\Dnull^d)$ as an analytic space is the disjoint union of the $p^{-1}(\widetilde{z_i}), i=1,\ldots,s$. 
	Finally, each $p^{-1}(\widetilde{z_i})$ is connected by our generalized connectedness result Corollary \ref{cor faser zshgd}. Thus the connected components of $\pi^{-1}(\Dnull^d)$ are precisely the  $\invers{p}(\widetilde{z_1}), \ldots, \invers{p}(\widetilde{z_s})$. Since moreover $\mathring{W}=p^{-1}(\{\widetilde{z_1}, \ldots, \widetilde{z_r}\})$ by assumption, it follows that $\oo W$ is the union of the components $\invers{p}(\widetilde{z_1}), \ldots, \invers{p}(\widetilde{z_r})$.
\end{proof}

\begin{rmk}\label{rmk separabel}
	The morphism $\pi $ in Lemma \ref{eine implikation fuer spec wide op} can be chosen to be separable. Indeed,
		given a smooth connected affinoid space $Z=\Sp(R)$ and points $z_1, \ldots, z_n \in Z$, one can show (by a slight modification of the proof of \cite[Satz 4.1.9]{BKKN67}) that there exists a finite surjective separable morphism $\pi \colon Z \nach \D^d$ such that $\btrg{\pi(z_i)}<1$ for all $i=1, \ldots, n$. The property $\btrg{\pi(z_i)}<1$ means that $\wt{\pi}(\wt{z_i})=0$ (due to \cite[7.1.5/Proposition 2]{BGR84}), whence it follows by Lemma \ref{eine implikation fuer spec wide op} that $\pi$ has the other desired properties too.
\end{rmk}
\begin{rmk}
We mention that the converse of Lemma \ref{eine implikation fuer spec wide op} also holds: If  there exists a finite surjective morphism $\pi \colon Z \nach \mb D^d$ such that $\oo{W}$ is a  union of connected components of $\pi^{-1}(\mathring{\mb D}^d)$, then these connected components are fibers of the reduction map, so in particular $\oo W$ is special affinoid wide-open.
\end{rmk}
\begin{lemma}\label{lem1}
	Let $X$ be a connected smooth Stein space of dimension $d$. Then $X$ has a cover $\{\mathring{W}_i\}_{i \in \N}$ consisting of admissible open, special affinoid wide-open subsets $\mathring W_i$ such that 
	\begin{enumerate}
		\item the ambient affinoid space $W_i \supseteq \mathring{W_i}$ in the sense of Definition \ref{def spec wide op} can be chosen to be connected and contained in $X$, 
		\item $\mathring{W}_i \subseteq \mathring{W}_{i+1}$, and
		\item $\mathring{W}_i$ is smooth of dimension $d$.
	\end{enumerate} 
\end{lemma}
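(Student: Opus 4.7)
The plan is to construct the $\mathring{W}_i$ as suitable unions of formal fibers inside an exhaustion of $X$ by connected affinoid subdomains.

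First I would invoke the definition of a Stein space to obtain an exhaustion $X = \bigcup_{n \in \N} Z_n$ by affinoid open subdomains with $Z_n \Subset Z_{n+1}$ inside $X$. Each $Z_n$ has only finitely many connected components, and these components are themselves affinoid subdomains; by connectedness of $X$, any two components of $Z_n$ must eventually lie in a single connected component of some $Z_m$ (otherwise one would extract a non-trivial idempotent of $\mc O_X(X)$, contradicting connectedness). Iterating this observation while keeping track of the relative compactness yields a refined exhaustion $X = \bigcup_{n \in \N} W_n$ in which each $W_n$ is a \emph{connected} affinoid subdomain of $X$ with $W_n \Subset W_{n+1}$.

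Given such an exhaustion, I would set $\mathring{W}_n := p_{W_{n+1}}^{-1}\bigl(p_{W_{n+1}}(W_n)\bigr) \subseteq W_{n+1}$, where $p_{W_{n+1}} \colon W_{n+1} \to \widetilde{W_{n+1}}$ is the reduction map. Because $W_n \Subset W_{n+1}$, the image $p_{W_{n+1}}(W_n)$ is a finite subset of $\widetilde{W_{n+1}}$, so $\mathring{W}_n$ is by definition a special affinoid wide-open in $W_{n+1}$, with ambient affinoid $W_{n+1}$ connected and contained in $X$; this gives (1). Property (2) follows since $\mathring{W}_n \subseteq W_{n+1} \subseteq W_{n+2}$ implies $p_{W_{n+2}}(\mathring{W}_n) \subseteq p_{W_{n+2}}(W_{n+1})$, hence $\mathring{W}_n \subseteq \mathring{W}_{n+1}$. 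Property (3) holds because $X$ is smooth of dimension $d$, so every non-empty admissible open of $X$ is again smooth of dimension $d$, and $W_n \subseteq \mathring{W}_n$ is non-empty. Finally, $\{\mathring{W}_n\}_{n \in \N}$ is an admissible cover of $X$ because the Stein exhaustion $\{W_n\}_{n \in \N}$ is one and $W_n \subseteq \mathring{W}_n$ for every $n$.

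The main obstacle is the initial step of producing a \emph{connected} affinoid exhaustion. Without the connectedness hypothesis on $X$, one would at best obtain ambient affinoids that are disjoint unions of finitely many connected components rather than single connected affinoids; the subtle point is to ensure that the iterative passage to components genuinely exhausts $X$ while preserving the strict inclusions $W_n \Subset W_{n+1}$. This is where the connectedness of $X$ enters essentially, via the identification $\pi_0(X) = \varinjlim_n \pi_0(Z_n)$ for the Stein exhaustion.
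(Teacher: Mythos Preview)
Your proposal is correct and follows essentially the same construction as the paper: define $\mathring{W}_n$ as the preimage under the reduction map $p_{W_{n+1}}$ of the (finite, because $W_n \Subset W_{n+1}$) image $p_{W_{n+1}}(W_n)$, and then verify the three properties. The only difference is that the paper simply takes the existence of a \emph{connected} affinoid exhaustion $\{W_i\}$ with $W_i \Subset W_{i+1}$ as part of the definition of a connected Stein space, whereas you sketch how to obtain such an exhaustion from a possibly disconnected one; this extra step is not needed once the paper's convention is adopted, but your outline of it is sound.
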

\begin{proof}
	By definition, $X$ has an admissible open cover $\{{W}_i\}_{i \in \N}$ by connected affinoid subsets $W_i$ satisfying $W_i \Subset  W_{i+1}$. One checks easily that the condition $W_i \Subset  W_{i+1}$ ensures that the image of $W_i$ under the reduction map $p_{i+1} \colon W_{i+1} \nach \wt{W_{i+1}}$ is a finite subset of $\wt {W_{i+1}}$. Then we can define $\oo W_{i+1}$ as the preimage of this finite set under $p_{i+1}$ and it is immediate that this $\oo W_{i+1}$ satisfies the desired conditions. 
\end{proof}
\subsection{Beyer's trace map}
First we recall the definition of the map
$
	\res \colon H^d_c(\oo{\D}^d, \omega_{\oo{\D}^d}) \nach K
$
from \cite[Definition 2.1.1]{Bey97a}. We let $K \langle X_1^{-1}, \ldots, X_d^{-1}\rangle^\dagger$ denote the ring of overconvergent series of the form 
$
\sum_{i_1,\ldots,i_d \leq 0}r_{i_1,\ldots,i_d}X_1^{i_1} \cdots X_d^{i_d}, \,\, r_{i_1,\ldots,i_d} \in K
$ 
and switch to multi-index notation, in particular
$
\frac{dX_1}{X_1} \wedge \ldots \wedge \frac{dX_d}{X_d}=\frac{dX_1\wedge \ldots \wedge dX_d}{X_1 \cdots X_d}=:\frac{dX}{X}.
$
Then, choosing coordinates $X=(X_1, \ldots, X_d)$ on $\D^d$ yields an isomorphism $H^d_c(\oo{\D}^d, \omega_{\oo{\D}^d}) \cong K \langle X^{-1}\rangle^\dagger \cdot \frac{dX}{X} $  (cf. \cite[Corollary 1.2.5]{Bey97a}), which allows one to define $\res$ by the following formula
	\begin{equation} \label{res vorschrift} 
	\begin{aligned}
		\res \colon H^d_c(\oo{\D}^d, \omega_{\oo{\D}^d}) &\nach K \\
	\sum_{i\leq 0}r_{i}X^{i} \cdot  \frac{dX}{X}  & \auf r_0.
	\end{aligned}
\end{equation}
This 
is independent of the choice of coordinates $(X_1, \ldots, X_d)$ on $\D^d$ by \cite[Proposition 2.1.3]{Bey97a}.
We note that Beyer works with the basis $dX$ and then $\res$ projects onto the $(-1)$-th coefficient, but this coincides with the $0$-th coefficient with respect to the basis $dX/X$.
Another important ingredient in the construction of the trace map is the following map from \cite[p.\ 234]{Bey97a}:
\begin{definition}[The map $\sigma$]\label{def trace sigma}
	Let $Z=\Sp(R)$ be a {connected} 
	smooth affinoid space of dimension $d$. Let $\mathring W \subseteq Z$ be a special affinoid wide-open space with an associated finite surjective separable morphism 
$
		\pi \colon Z \nach \D^d
$
	according to Remark \ref{rmk separabel}.
	We denote the trace map of the corresponding finite field extension
	\begin{equation*}
		E=Q(T_d) \nachinj L=Q(R)
	\end{equation*}
	by $\Tr_{L/E}$.
	It induces a map on the $d$-forms
	\begin{equation} \label{sigma vorschrift} 
		\begin{aligned}
			\Omega^d_{L/K} = \Omega^d_{E/K} \otimes_E L & \stackrel{{\sigma}}\nach \Omega^d_{E/K} \\
			\omega \otimes b & \auf \Tr_{L/E}(b) \cdot \omega,
		\end{aligned}
	\end{equation}
	where $\Omega^d_{L/K} = \Omega^d_{E/K} \otimes_E L$ holds because $L/E$ is separable. Moreover,  $\Omega^d_{L/K}=Q(R) \tens{R} \Omega^d_{R/K}$ and $\Tr_{L/E}(R) \subseteq T_d$
	since $\pi$ is finite. Therefore,  $\sigma (\Omega^d_{R/K} )\subseteq \Omega^d_{T_d/K}$, i.e.\ $\sigma$ restrics to a map
$$
		\sigma \colon \Omega^d_{R/K} \nach \Omega^d_{T_d/K},
$$
	which corresponds to a map 
$$
		\sigma \colon \pi_* \omega_Z \nach \omega_{\D^d}.
$$
	We write $\sigma=\sigma_\pi$ when we want to stress the dependence on $\pi$. 
\end{definition}
Now we reproduce \cite[Definition 4.2.4]{Bey97a}:
\begin{definition}[Trace map for special affinoid wide-open spaces]\label{defi beyers trace map}
	Let $Z$ be a connected smooth affinoid space of dimension $d$. Let $\mathring W \subseteq Z$ be a special affinoid wide-open space with an associated finite surjective separable morphism 
$
		\pi \colon Z \nach \D^d
$
according to Remark \ref{rmk separabel}.
	Let $\oo Z:= \invers{\pi}(\oo{\D}^d)$. \emph{The trace map}
$$
		t=t_\pi \colon H^d_c(\mathring{W}, \omega_Z) \nach K
$$
	is defined as the following composite map (which is in fact independent of the choice of the finite surjective separable morphism $\pi$ by \cite[Corollary 4.2.11]{Bey97a}):
$$
		H^d_c(\mathring{W}, \omega_Z) \nachinj H^d_c(\mathring{Z}, \omega_Z) \isoauf H^d_c(\oo{\D}^d, \pi_*\omega_Z) \xrightarrow{\,H^d_c(\sigma)\,} H^d_c(\oo{\D}^d, \omega_{\oo{\D}^d}) \xrightarrow{\,\, \res \,\,} K
$$
	where the  the third map is induced by the $\sigma$ from Definition \ref{def trace sigma}. 
\end{definition}
The definition of the trace map is then extended to smooth Stein spaces, since they admit admissible covers consisting of special affinoid wide-open spaces:

\begin{definition}[Trace map for Stein spaces]\label{bey trace for stein def}
	Let $X$ be a connected smooth Stein space of dimension $d$. In the notation of Lemma \ref{lem1}, we have the trace maps 
$
		t_i \colon H^d_c(\mathring{W}_i, \omega_X) \nach K.
$
	Since the diagrams 
	\begin{equation*}\label{diag1}
		\begin{tikzcd}
			H^d_c(\mathring{W}_i, \omega_X) \arrow[rd, "t_i"'] \arrow[r] & H^d_c(\mathring{W}_{i+1}, \omega_X) \arrow[d, "t_{i+1}"] \\
			& K                     
		\end{tikzcd}
	\end{equation*}
	commute (by \cite[Corollary 4.2.12]{Bey97a}), the $t_i$ induce a map 
$$
		t \colon \varinjlim_i H^d_c(\mathring{W}_i, \omega_X)=H^d_c(X, \omega_X) \nach K.
$$
	Beyer shows in \cite[Proof of Satz 7.1]{Bey97b} that this $t$ satisfies Theorem \ref{thm serre duality} below. In particular, it follows by standard universal abstract nonsense 
	that, up to a unique automorphism of $\omega_X$, $t$ is independent of the choice of the covering $\{\mathring{W}_i\}_{i \in \N}$. We write $t=t_X$ when we wish to emphasize the base space $X$.
	
	\begin{theorem}[Serre duality for smooth rigid Stein spaces]\label{thm serre duality}
		Let $X$ be a smooth rigid $K$-space of dimension $d$.  If $X$ is Stein,
		then there is a canonical trace morphism
$$
			t \colon H^d_c(X, \omega_X) \nach K
$$
		which has the following property: If $\mc F$ is a coherent sheaf on $X$, then the composite of the trace map $t$ with the canonical pairing
$$
			H^{d-i}_c(X, \mc F) \times \Ext^{i}_X(\mc F, \omega_X) \nach H^d_c(X, \omega_X)
$$
		induces an isomorphism of topological $K$-vector spaces
$$
			H^{d-i}_c(X, \mc F)^\vee \isoauf \Ext^{i}_X(\mc F, \omega_X)
$$
		for all $i \geq 0$. 
	\end{theorem}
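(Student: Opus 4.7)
The trace map $t = t_X$ has already been constructed, so the task is to verify that it induces the stated duality isomorphism. The plan is to pass to the admissible exhaustion $\{\mathring W_i\}_{i \in \N}$ of Lemma \ref{lem1}, establish a compatible family of duality pairings on each special affinoid wide-open $\mathring W_i$, and then take the colimit.

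First, I would fix a coherent sheaf $\mathcal F$ on $X$ and, for each $i$, analyze the finite-level Yoneda pairing
\[
H^{d-j}_c(\mathring W_i, \mathcal F) \times \Ext^{j}_{\mathring W_i}(\mathcal F|_{\mathring W_i}, \omega_X|_{\mathring W_i}) \lto H^d_c(\mathring W_i, \omega_X) \xrightarrow{t_i} K.
\]
The key input is Beyer's identification, reviewed in Subsection \ref{subsec relation}, of $H^d_c(\mathring W_i, \omega_X)$ with an algebraic local cohomology module: after choosing a finite surjective separable map $\pi \colon W_i \to \D^d$ as in Remark \ref{rmk separabel}, the compactly supported cohomology supported on the finitely many special-fiber points becomes local cohomology at the corresponding maximal ideals of the affinoid ring of $W_i$. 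Under this identification the pairing above is an incarnation of classical Grothendieck local duality for the regular local rings $\widehat{\mathcal{O}_{X,x}}$, whose dualizing module is the completed stalk of $\omega_X$. This yields a topological isomorphism $H^{d-j}_c(\mathring W_i, \mathcal F)^\vee \isoauf \Ext^{j}_{\mathring W_i}(\mathcal F|_{\mathring W_i}, \omega_X|_{\mathring W_i})$ at each finite level, compatible with the inclusions $\mathring W_i \hookrightarrow \mathring W_{i+1}$ by the compatibility of the trace maps $t_i$ already used to construct $t$.

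Next, I would pass to the limit. Since $X$ is Stein, one has $H^{d-j}_c(X, \mathcal F) = \varinjlim_i H^{d-j}_c(\mathring W_i, \mathcal F)$ as a locally convex $K$-vector space of compact type (in the topologies of \cite[\S 1.3]{Bey97a}), while an inverse-limit description presents $\Ext^{j}_X(\mathcal F, \omega_X)$ as a Fréchet space. The trace $t$ is the colimit of the $t_i$ by construction, so the finite-level dualities assemble into a map of direct/inverse systems. Dualizing the direct system and invoking the standard duality between direct limits of Banach spaces (compact type) and inverse limits of Banach spaces (Fréchet) then produces the desired topological isomorphism $H^{d-j}_c(X, \mathcal F)^\vee \isoauf \Ext^{j}_X(\mathcal F, \omega_X)$ for all $j \geq 0$.

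The main obstacle is the last step: one must rule out a nonzero $\varprojlim^1$-contribution on the Ext side in order to identify the strong dual of the colimit with $\Ext^{j}_X(\mathcal F, \omega_X)$. This is ensured by the relative compactness $W_i \Subset W_{i+1}$ built into the Stein exhaustion from Lemma \ref{lem1}, together with coherence of $\mathcal F$, which forces the transition maps on the $\Ext$-groups to have dense image and thereby provides the Mittag--Leffler condition needed for the limiting pairing to remain non-degenerate.
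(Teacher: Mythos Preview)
The paper does not supply its own proof of this theorem: immediately before the statement it simply records that ``Beyer shows in \cite[Proof of Satz 7.1]{Bey97b} that this $t$ satisfies Theorem~\ref{thm serre duality} below,'' and no further argument is given. So there is nothing substantive in the paper to compare your proposal against; the theorem is quoted as background from Beyer's dissertation (and, in a different form, from \cite{Chi90}).

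That said, your outline is broadly in the spirit of Beyer's method as the paper describes it in \S\ref{sec: gt}, but one step is stated too strongly. You write that under Beyer's identification ``the compactly supported cohomology supported on the finitely many special-fiber points becomes local cohomology at the corresponding maximal ideals.'' The paper's Lemma~\ref{lemma interplay} only gives a map $\bigoplus_i H^d_{z_i}(\omega_Z)\to H^d_c(\mathring W,\omega_Z)$ with \emph{dense image}, not an isomorphism; so one cannot literally identify the finite-level pairing with Grothendieck local duality. Density is enough to pin down the trace map (which is how the paper uses it), but the full duality on each $\mathring W_i$ requires a separate argument---this is precisely what Beyer carries out in \cite{Bey97b} and what the paper does not reproduce. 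Your limit step and the Mittag--Leffler remark are reasonable heuristics, but again the actual verification lives in the cited reference rather than in this paper.
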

	Here $H^{d-i}_c(X, \mc F)^\vee$ denotes the space of continuous linear forms on $H^{d-i}_c(X, \mc F)$, equipped with the strong dual topology. Moreover, $\Ext_{X}^{i}(\mc F, \omega_{X})$ is equipped with the canonical topology for global sections of a coherent sheaf (see \cite[\S 1.3]{Bey97a}), as $\Ext_{X}^{i}(\mc F, \omega_{X})=H^0(X, \sExt_{X}^{i}(\mc F, \omega_{X}))$ because the spectral sequence for the derived functor of the
	composition is degenerate. Indeed, the spectral sequence degenerates since $X$ is quasi-Stein and $\sExt_{X}^{i}(\mc F, \omega_{X})$ is  a coherent $\mc O_{X}$-module for all $i$ (cf.\ \cite[Proposition 3.3 and also the discussion preceding Lemma 3.7]{Chi90}).
\end{definition}

\subsection{The residue map on local cohomology}\label{subsec: res map}

Let $Z=\Sp(R)$ be a connected smooth affinoid space of dimension $d$ and $z \in Z$ a point with corresponding maximal ideal $\mf{m}_z \subseteq R$. 
Following Beyer, we often shorten notation as follows: Given a coherent sheaf $\mc F$ on $Z$, we set $M=\Gamma(Z, \mc F)$ and
$$
	H^j_z(\mc F):= H^j_{\mf{m}_z}(M)=H^j_{\wh{{\mf m}_z}}(\wh M),
$$
where the latter identification between the local cohomology groups (\quot{insensitivity to completion}) is shown\footnote{The assertion \cite[Proposition 2.15]{Hun07} concerns the case of a local ring and its unique maximal ideal, but the proof carries over verbatim to our (more general) setting of the Noetherian ring $R$ and any maximal ideal ${\mf{m}}_z$.} in \cite[Proposition 2.15]{Hun07}. 

There are two fundamental properties of local cohomology that play an important role throughout. One is its relation to the sheaf cohomology of the underlying affine scheme $\Spec(R)$ as treated in \cite[Theorem 12.47]{ILL+07}, which ultimately yields the link to compactly supported cohomology described in $\S$\ref{subsec relation} below. The other is the explicit description of local cohomology as a colimit of Koszul complexes:
\begin{lemma}\label{koszul coho}
		Let $A$ be a Noetherian ring of dimension $d$, $\mf a \subseteq A$ an ideal and $M$ an $A$-module. Let $t_1, \ldots, t_d \in A$ be such that $\sqrt{\mf a}=\sqrt{(t_1, \ldots, t_d)}$. Write $t=(t_1, \ldots, t_d)$. For each $\rho \in \N$, set $t^\rho:=(t_1^\rho, \ldots, t_d^\rho)$. Then there is a canonical isomorphism
$$
		\varinjlim_\rho M/t^\rho M \isoauf H^d_{\mf a}(M),
$$
where the transition map $M/t^{\rho_1} M \nach M/t^{\rho_2} M$ for $\rho_1\leq \rho_2$ on the left-hand side is given by $m \auf (\prod_{i=1}^{d}t_i)^{\rho_2-\rho_1} \cdot m$.
\end{lemma}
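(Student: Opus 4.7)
The plan is to quote the standard description of local cohomology as a direct limit of Koszul cohomologies and then identify the top-degree piece explicitly. Since $H^*_{\mf a}(M)$ depends only on $\sqrt{\mf a}$, we may replace $\mf a$ by $(t_1,\ldots,t_d)$ throughout without changing the left-hand side of the asserted isomorphism.

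The key input is the classical isomorphism
\[
H^j_{\mf a}(M) \;\isoauf\; \varinjlim_\rho H^j\bigl(K^\bullet(t^\rho;M)\bigr) \qquad (j \geq 0),
\]
where $K^\bullet(t^\rho;M)$ denotes the cohomological Koszul complex on the sequence $t^\rho=(t_1^\rho,\ldots,t_d^\rho)$ and the transition maps come from the standard Koszul chain maps. This is exactly \cite[Theorem 7.11]{ILL+07} (and is also carried out in \cite[Lemma 3.1.2]{Bey97a}); the cleanest way to reprove it would be to exhibit an explicit quasi-isomorphism between $\varinjlim_\rho K^\bullet(t^\rho;A)$ and the \v{C}ech complex on $t_1,\ldots,t_d$, and then invoke the classical fact that the \v{C}ech complex on a set of generators computes $H^*_{\mf a}$.

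To conclude, I would specialize to $j=d$. The last differential of $K^\bullet(t^\rho;M)$, namely $K^{d-1}(t^\rho;M)=M^d \to K^d(t^\rho;M)=M$, sends $(m_1,\ldots,m_d)$ to $\sum_i \pm t_i^\rho m_i$, so its cokernel $H^d(K^\bullet(t^\rho;M))$ equals $M/(t_1^\rho,\ldots,t_d^\rho)M=M/t^\rho M$. Writing the Koszul complex as the tensor product $\bigotimes_{i=1}^d K^\bullet(t_i^\rho;A)$ makes the transition map the tensor product of the one-variable chain maps $K^\bullet(t_i^{\rho_1};A) \to K^\bullet(t_i^{\rho_2};A)$, each of which is the identity in degree $0$ and multiplication by $t_i^{\rho_2-\rho_1}$ in degree $1$. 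In the top total degree every tensor factor must contribute its degree-$1$ component, so the induced transition map on $M/t^\rho M$ is multiplication by $\prod_{i=1}^d t_i^{\rho_2-\rho_1}$, matching the formula in the statement.

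The only nontrivial ingredient is the classical identification of local cohomology with the direct limit of Koszul cohomologies; once that is in hand, the identification of $H^d$ with $M/t^\rho M$ and the determination of the transition maps are direct calculations on the Koszul complex, so there is no real obstacle.
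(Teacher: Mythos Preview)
Your proposal is correct and follows essentially the same route as the paper: the paper does not give an independent argument but simply cites \cite[Lemma 3.1.2]{Bey97a} and \cite[Theorem 7.11]{ILL+07}, and you invoke exactly these references while spelling out the top-degree identification and the transition maps.
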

\begin{proof}
Note that, when $A$ is local and $\mf a= \mf m$ its unique maximal ideal, the condition $\sqrt{(t_1, \ldots, t_d)}=\sqrt{\mf m}=\mf m$ means that $(t_1, \ldots, t_d)$ is a system of parameters (by definition \doublecitestacks{07DU}{00KU}). This special case is the content of \cite[Lemma 3.1.2]{Bey97a}. For the more general statement, we may assume that $\mf a =(t_1, \ldots, t_d)$ due to the insensitivity of local cohomology to taking radicals \cite[Proposition 7.3 (2)]{ILL+07}, and then conclude from \cite[Theorem 7.11]{ILL+07}.
\end{proof}
For instance, this result yields the isomorphisms  (\ref{beyersisoauflocalcoho1}) and (\ref{beyersisoauflocalcoho2}) further below. \\
 Now, in \cite[Definition 4.2.7]{Bey97a}, Beyer defines a canonical \textit{residue map} 
\begin{equation}\label{res auf local coho}
	\res_z \colon H^d_z(\omega_Z)= H^d_{\mf{m}_z}(\Omega^d_{R/K}) \nach K
\end{equation}
whose explicit construction we need not recall for our purposes, but rather its properties:

\begin{lemma}\label{lemma gamma new}
	Let $Z=\Sp(R)$ be a {connected} smooth affinoid space of dimension $d$. Let $\mathring W \subseteq Z$ be a special affinoid wide-open space with an associated finite surjective morphism 
$
		\pi \colon Z \nach \D^d
$
	according to Lemma \ref{eine implikation fuer spec wide op}, and let $\varphi \colon T_d \nach R$ be the finite injective ring morphism corresponding to $\pi$. Let $\{z_1, \ldots, z_r\}=\pi^{-1}(0) \cap \mathring W $ and $\{z_1, \ldots, z_r, z_{r+1}, \ldots z_s\}=\pi^{-1}(0)$. Denote by $\mf m_1, \ldots \mf m_s \subseteq R$ the corresponding maximal ideals in $R$. Let $M=\Gamma(Z, \omega_Z)$ and let $\mf m \subseteq T_d$ denote the maximal ideal corresponding to $0 \in \D^d$.
	Then:
	\begin{enumerate}[(i)]
		\item For every coherent sheaf $\mc F$ on $Z$, there is a canonical isomorphism
$$
			\gamma \colon \bigoplus\limits_{i=1}^s H_{z_i}^d(\mc F) \isoauf H_0^d(\pi_* \mc F). 
$$
		We write $\gamma=\gamma_{\mc F, \pi}$ when we want to stress the dependence on $\mc F$ and $\pi$. \label{rm lc may vie}
		\item {Let $X_1, \ldots , X_d$ be a system of parameters for the $\mf m$-adic completion ${T_d}^{\wedge \mf m}$}. There are canonical isomorphisms 
		\begin{equation}\label{beyersisoauflocalcoho1} 
			H_0^d(\pi_* \omega_Z) \cong \varinjlim_\rho \wh{M_{\mf m}} / (X_1^\rho, \ldots, X_d^\rho)
		\end{equation}
		and
		\begin{equation}\label{beyersisoauflocalcoho2}
			\bigoplus\limits_{i=1}^s H_{z_i}^d(\omega_Z) \cong \varinjlim_\rho \bigoplus\limits_{i=1}^s \wh{M_{\mf m_i}} / (X_1^\rho, \ldots, X_d^\rho),
		\end{equation}
		where we have denoted the image of $X_j$ under each map ${T_d}^{\wedge \mf m} \nach R^{\wedge \mf m_i}$ induced by $\varphi$ on completions again by $X_j$; these form a system of parameters in $R^{\wedge \mf m_i}$.
		Via these isomorphisms, $\gamma^{-1}$ is identified with the map
		\begin{equation}\label{gammatilde vorschrift}
		\begin{aligned}
				\widetilde{\gamma}^{-1} \colon \varinjlim_\rho \wh{M_{\mf m}} / (X_1^\rho, \ldots, X_d^\rho) & \isoauf \varinjlim_\rho \bigoplus\limits_{i=1}^s \wh{M_{\mf m_i}} / (X_1^\rho, \ldots, X_d^\rho) \\
			\begin{bmatrix}
				\omega \\
				X^\rho
			\end{bmatrix} & \auf \left (\begin{bmatrix}
				\omega_1 \\
				X^\rho
			\end{bmatrix}, \ldots, \begin{bmatrix}
				\omega_r \\
				X^\rho
			\end{bmatrix} \right )
		\end{aligned}
		\end{equation}
		where $\left [ \begin{smallmatrix}
			\omega \\
			X^\rho
		\end{smallmatrix} \right ]$ denotes the image of $\omega$ under $\wh{M_{\mf m}} / (X_1^\rho, \ldots, X_d^\rho) \nach \varinjlim_j \wh{M_{\mf m}} / (X_1^j, \ldots, X_d^j),$ and $\omega_i$ denotes the image of $\omega$ in $\wh{M_{\mf m_i}}$. \label{rm lc darst} 
		\item \label{rm lc dreieck nummer}
		The diagram 
		\begin{equation*}
			\begin{gathered}
				\xymatrix{
					&\bigoplus\limits_{i=1}^s H_{z_i}^d(\omega_Z) \ar[r]^-{\gamma}_{\sim} \ar[rd]_-{\sum \res_{z_i}} & H_0^d(\pi_*\omega_{Z}) \ar[r]^-{H_0^d(\sigma)} & H_0^d(\omega_{\D^n}) \ar[ld]^-{\res_0} \\
					& & K & }
			\end{gathered}
		\end{equation*}
		commutes, where $\sigma=\sigma_\pi$ is defined as in Definition \ref{def trace sigma} above.  
	\end{enumerate}
\end{lemma}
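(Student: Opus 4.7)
My plan is to establish (i) by base change along the finite map $\varphi$ combined with the insensitivity of local cohomology to completion, to deduce (ii) directly from the Koszul description of local cohomology, and to prove (iii) by computing both sides on Koszul representatives and reducing to the defining compatibility of Beyer's residue with finite traces.

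For (i), the key input is the semilocal decomposition: since $\varphi\colon T_d \nach R$ is finite and the fiber of $\pi$ over $0$ is zero-dimensional with maximal ideals $\mf m_1,\ldots,\mf m_s$, we have
\[
T_d^{\wedge \mf m} \otimes_{T_d} R \;\cong\; R^{\wedge \mf m R} \;\cong\; \prod_{i=1}^s R^{\wedge \mf m_i}.
\]
Tensoring with $M=\Gamma(Z,\mc F)$ gives $\wh{M} := M \otimes_{T_d} T_d^{\wedge \mf m} \cong \bigoplus_{i=1}^s \wh{M_{\mf m_i}}$. Applying the insensitivity of local cohomology to completion (\cite[Proposition 2.15]{Hun07}) on both sides, together with the fact that $\wh{\mf m}\, R^{\wedge \mf m_i}$ is $\mf m_i R^{\wedge \mf m_i}$-primary (so local cohomology with support there depends only on its radical), yields
\[
H^d_0(\pi_* \mc F) = H^d_{\mf m}(M) \;\cong\; \bigoplus_{i=1}^s H^d_{\mf m_i}(M) = \bigoplus_{i=1}^s H^d_{z_i}(\mc F),
\]
and I would define $\gamma$ as the inverse of this isomorphism.

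For (ii), Lemma \ref{koszul coho} applied with $t_j=X_j$ gives the Koszul-style presentations (\ref{beyersisoauflocalcoho1}) and (\ref{beyersisoauflocalcoho2}) at once. Here one must verify that $X_1,\ldots,X_d$ form a system of parameters in each $R^{\wedge \mf m_i}$: they generate an $\mf m_i R^{\wedge \mf m_i}$-primary ideal (using again the zero-dimensional fiber), and $\dim R^{\wedge \mf m_i}=d$ holds by going-up for finite extensions. The explicit description of $\widetilde\gamma^{-1}$ is then simply the observation that under the semilocal decomposition from (i), a global class $\omega \in \wh M/(X^\rho)$ decomposes componentwise as $(\omega_1,\ldots,\omega_s)$ with $\omega_i$ the image of $\omega$ in $\wh{M_{\mf m_i}}/(X^\rho)$; this matches the formula (\ref{gammatilde vorschrift}).

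For (iii), using $K$-linearity I would reduce to showing, for each fixed $i$, that the composition $\res_0 \circ H_0^d(\sigma) \circ \gamma$ restricted to the summand $H^d_{z_i}(\omega_Z)$ equals $\res_{z_i}$. On Koszul representatives, $H_0^d(\sigma)$ becomes the sum of the local trace maps on $\Omega^d_{R^{\wedge \mf m_i}/K}/(X^\rho)$ induced by $\Tr_{L/E}$, which in turn come from the decomposition $L \otimes_E E^{\wedge \mf m} \cong \prod_i Q(R^{\wedge \mf m_i})$, so that $\Tr_{L/E}$ decomposes as the sum of the traces of the local finite extensions $R^{\wedge \mf m_i}/T_d^{\wedge \mf m}$. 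Combined with the explicit description of $\res_0$ as the $0$-th coefficient with respect to $dX/X$ (see (\ref{res vorschrift})), this reproduces Beyer's intrinsic characterization of $\res_{z_i}$ in \cite[Definition 4.2.7]{Bey97a}. The main obstacle is precisely this last identification: one must match Beyer's definition of $\res_{z_i}$ on Koszul representatives (via a completed trace pairing) with the formula coming from $\res_0 \circ H_0^d(\sigma)$, which ultimately boils down to checking that both procedures extract the constant coefficient in the completed power-series expansion after taking the local trace, and that the compatibility of $\sigma$ with the product decomposition of $R^{\wedge \mf m R}$ is respected by the Koszul transition maps.
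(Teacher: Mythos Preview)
Your proposal is essentially correct, but it is worth noting that the paper itself does not actually prove this lemma: its proof consists entirely of three pointers to \cite[Lemma 4.2.9]{Bey97a}, parts (a) and (c). What you have written is a faithful reconstruction of the argument underlying those references. In particular, your treatment of (i) via the semilocal decomposition $R^{\wedge \mf m R}\cong \prod_i R^{\wedge \mf m_i}$ combined with insensitivity of local cohomology to completion is exactly the Mayer--Vietoris argument the paper alludes to elsewhere (see the proof of Lemma~\ref{lem-exp-descr}), and your derivation of (ii) from Lemma~\ref{koszul coho} is the intended one.

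For (iii) your strategy is right---decompose $\Tr_{L/E}$ into the local traces $\Tr_{R^{\wedge\mf m_i}/T_d^{\wedge\mf m}}$ via the product decomposition, then match against Beyer's definition of $\res_{z_i}$---but the final identification you flag as ``the main obstacle'' is genuinely where all the content lies, and your sketch stops just short of it. Beyer's $\res_{z_i}$ is defined precisely so that this compatibility holds (it is built from the completed trace $R^{\wedge\mf m_i}\to T_d^{\wedge\mf m}$ followed by the residue on $T_d^{\wedge\mf m}$), so once one unwinds \cite[Definition 4.2.7]{Bey97a} the verification is tautological. You should state this explicitly rather than leaving it as a loose end: the commutativity in (iii) is not an independent computation but rather the \emph{definition} of $\res_{z_i}$ read through the isomorphism $\gamma$.
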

\begin{proof}
	\begin{enumerate}[(i)]
		\item The isomorphism $\gamma$ is the map from \cite[Lemma 4.2.9 (a)]{Bey97a}.
		\item This is true by the arguments in \cite[Proof of Lemma 4.2.9 (c)]{Bey97a}. 
		\item This assertion is precisely \cite[Lemma 4.2.9 (c)]{Bey97a}. 
	\end{enumerate}
\end{proof}

\subsection{The map from local cohomology into compactly supported cohomology}\label{subsec relation}
Let $Z=\Sp(R)$ be a {connected} smooth affinoid space of dimension $d$, let $\mathring W \subseteq Z$ be a special affinoid wide-open space. Given a point $z \in \oo W$ and a coherent sheaf $\mc F$ on $Z$, \cite[Lemma 4.2.6]{Bey97a} constructs a canonical map
$$
H_{z}^d(\mc F) \nach H^d_c(\oo{W}, \mc F) 
$$
	that is functorial in $\mc F$. Again, we need not recall its explicit construction for our purposes, but rather the following properties:
\begin{lemma}\label{lemma interplay}
	Let $Z=\Sp(R)$ be a {connected} smooth affinoid space of dimension $d$. Let $\mathring W \subseteq Z$ be a special affinoid wide-open space. 
	Let
$
		\pi \colon Z \nach \D^d
$
	be any finite surjective morphism associated to $\oo W$ as in Lemma \ref{eine implikation fuer spec wide op}.
	Let $\{z_1, \ldots, z_r\}=\pi^{-1}(0) \cap \mathring W $ and $\{z_1, \ldots, z_r, z_{r+1}, \ldots z_s\}=\pi^{-1}(0)$. Set
	$
	\oo Z = \invers{\pi}(\Dnull^d).
	$
	Then
	the image of the canonical map
	$$
	\bigoplus\limits_{i=1}^s H_{z_i}^d(\omega_Z) \nach H^d_c(\oo{Z}, \omega_Z) 
	$$
	is dense in $H^d_c(\oo{Z}, \omega_Z)$ and the image of the following map induced by restriction
	$$
	\bigoplus\limits_{i=1}^r H_{z_i}^d(\omega_Z) \nach H^d_c(\oo W, \omega_Z) 
	$$
	is dense in $H^d_c(\oo W, \omega_Z)$. 
\end{lemma}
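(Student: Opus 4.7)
The plan is to transport the statement to the polydisc $\D^d$ via the finite map $\pi$, where $H^d_c$ admits an explicit description as a space of overconvergent Laurent series so that density becomes the classical density of Laurent polynomials in an overconvergent Tate algebra. I will first treat the case $\oo W = \oo Z$ in one stroke and then deduce the $\oo W$ case by a clopen-decomposition argument, using that $\oo W$ is a union of connected components of $\oo Z = \bigsqcup_{i=1}^{s} \invers{p}(\wt z_i)$ provided by Lemma \ref{eine implikation fuer spec wide op}.

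For the $\oo Z$ case, the restriction $\pi|_{\oo Z}\colon \oo Z \to \Dnull^d$ is finite and hence proper, giving a canonical isomorphism $H^d_c(\oo Z, \omega_Z) \cong H^d_c(\Dnull^d, \mc G)$ where $\mc G := \pi_*\omega_Z$ is coherent on $\D^d$. Combined with the isomorphism $\gamma$ of Lemma \ref{lemma gamma new}(\ref{rm lc may vie}) identifying $\bigoplus_{i=1}^{s} H^d_{z_i}(\omega_Z) \cong H^d_0(\mc G)$, and the naturality of the canonical local-to-compactly-supported-cohomology map, the assertion reduces to the following key claim: for every coherent sheaf $\mc G$ on $\D^d$, the canonical map $\iota\colon H^d_0(\mc G) \to H^d_c(\Dnull^d, \mc G)$ has dense image. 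To prove this key claim, I will choose a finite presentation $\omega_{\D^d}^{m} \to \omega_{\D^d}^{n} \twoheadrightarrow \mc G$ (using that $\omega_{\D^d}$ is free of rank $1$). Both functors $H^d_0(-)$ and $H^d_c(\Dnull^d,-)$ are right-exact on coherent sheaves: the former by the length-$d$ Koszul description of Lemma \ref{koszul coho}, the latter because $\Dnull^d$ is a smooth Stein space of dimension $d$ and hence $H^{d+1}_c = 0$. In the resulting commutative square with continuous surjective horizontal arrows, density for $\mc G = \omega_{\D^d}^{n}$ propagates to density for $\mc G$; by additivity this further reduces to the single case $\mc G = \omega_{\D^d}$.

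For the base case $\mc G = \omega_{\D^d}$, the explicit description \cite[Corollary 1.2.5]{Bey97a} identifies $H^d_c(\Dnull^d, \omega_{\Dnull^d}) \cong K\langle X^{-1}\rangle^\dagger \cdot \tfrac{dX}{X}$, while Lemma \ref{koszul coho} presents $H^d_0(\omega_{\D^d}) \cong \varinjlim_\rho T_d/(X_1^\rho, \ldots, X_d^\rho)\cdot dX$. A direct computation tracing the Koszul class $\big[\tfrac{X^I\, dX}{X^\rho}\big]$ with multi-index $0 \leq I \prec \rho$ through $\iota$ yields the monomial $X^{I-\rho+\mb{1}} \cdot \tfrac{dX}{X}$, all of whose exponents are $\leq 0$, so the image of $\iota$ equals $K[X_1^{-1}, \ldots, X_d^{-1}] \cdot \tfrac{dX}{X}$; this is dense in $K\langle X^{-1}\rangle^\dagger \cdot \tfrac{dX}{X}$ with the topology of \cite[\S 1.3]{Bey97a} because polynomials are dense in each Banach constituent of the locally convex inductive limit. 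Finally, the $\oo W$ case follows from the $\oo Z$ case: the clopen decomposition $\oo Z = \oo W \sqcup \bigsqcup_{i=r+1}^{s}\invers{p}(\wt z_i)$ induces $H^d_c(\oo Z, \omega_Z) = H^d_c(\oo W, \omega_Z) \oplus \bigoplus_{i=r+1}^{s} H^d_c(\invers{p}(\wt z_i), \omega_Z)$, and this decomposition is compatible with the canonical maps from $\bigoplus_{i} H^d_{z_i}(\omega_Z)$, so density on $\oo Z$ restricts to density on $\oo W$. The hard part will be matching the abstract canonical map $\iota$ of \cite[Lemma 4.2.6]{Bey97a} with the explicit Koszul-to-overconvergent-series formula described in the base case: this requires carefully unpacking Beyer's construction of the local-to-compactly-supported cohomology map, but once the formula is confirmed the rest is essentially density of polynomials in a Tate algebra.
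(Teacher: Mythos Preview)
The paper does not prove this lemma; it simply cites \cite[Lemma 4.2.9 (d)]{Bey97a}. Your proposal is therefore not a comparison target but an independent reconstruction of (presumably) Beyer's argument, and the outline is sound: the reduction to $\Dnull^d$ via $\pi_*$ and the Mayer--Vietoris isomorphism $\gamma$, the right-exactness of both $H^d_0(-)$ and $H^d_c(\Dnull^d,-)$ in degree $d$ (the latter because $\Dnull^d \setminus \D^d(\varepsilon)$ admits a $d$-member Leray cover, so \v{C}ech cohomology vanishes above degree $d-1$ and hence $H^{d+1}_c=0$), the finite-presentation dévissage to $\mc G=\omega_{\D^d}$, and the explicit identification of the image with Laurent polynomials are all correct ingredients. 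The propagation-of-density step needs only that the surjection $H^d_c(\Dnull^d,\omega_{\D^d}^n)\twoheadrightarrow H^d_c(\Dnull^d,\mc G)$ is continuous, which it is by construction; then a continuous linear surjection takes dense subsets to dense subsets.

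The one point you correctly flag as ``hard'' is genuinely the crux: you must verify that Beyer's abstract comparison map $H^d_0(\omega_{\D^d}) \to H^d_c(\Dnull^d,\omega_{\Dnull^d})$ from \cite[Lemma 4.2.6]{Bey97a} really sends the Koszul class $[X^I\,dX / X^\rho]$ to the monomial $X^{I-\rho+\mathbf{1}}\cdot \tfrac{dX}{X}$ under the overconvergent description. This is not a formality---it requires unpacking Beyer's construction via the long exact sequence of the pair and the \v{C}ech model for $H^{d-1}(\Dnull^d\setminus\{0\})$---so you should either carry it out or cite \cite[Proof of Lemma 4.2.9]{Bey97a} for it. You also tacitly use naturality of the local-to-compactly-supported map in the coherent sheaf (to make your reduction square commute) and compatibility with the clopen decomposition (for the $\oo W$ step); both are true but worth stating explicitly, since the former is what lets you identify the map after transporting along $\gamma$ and $\pi_*$.
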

\begin{proof}
	This is \cite[Lemma 4.2.9 (d)]{Bey97a}. 
\end{proof}

\section{The theorem on the relative trace map} \label{sec: rel trace}
In this section, we introduce the relative trace map, then state and prove the main theorem about it. 
Throughout this section, let $\alpha \colon X \nach Y$ be a finite étale morphism of smooth $d$-dimensional rigid spaces. 
\subsection{The relative trace map $t_\alpha$}
We start by defining the relative trace map.	Because $\alpha$ is finite, it pulls back affinoids to affinoids. Consequently, a result analogous to \citestacks{01SB} (see also \ {\cite[Proposition I.9.2.5]{Gro60}}) holds, namely: any coherent $\alpha_* \mc O_X$-module $\mc M$ can naturally be regarded as a coherent $\mc O_X$-module $\widetilde{\mc M}$ such that $\alpha_* \widetilde{\mc M}=\mc M$
and $(-)^{\sim} $ is an equivalence of categories\footnote{The $\mc O_X$-module sheaf $\wt{\mc M}$ is constructed by gluing as follows: If $V \subseteq Y$ is affinoid open, then $\mc M(V)$ is a module over $\alpha_*\mc{O}_X(V)=\mc O_X(\invers{f}(V))$. Thus, on the affinoid open subspace $\invers{f}(V) \subseteq X$, we can consider the  $\mc O_X$-module sheaf associated to $\mc M(V)$. Call this sheaf $\wt{\mc{M}}|_{\invers{f}(V)}$. Then one checks that the $\wt{\mc{M}}|_{\invers{f}(V)}$ glue to an $\wt{\mc{M}}$, when $V$ runs through an affinoid open cover of $Y$.}.
Since $\alpha$ is étale, there is a natural isomorphism
\begin{equation}\label{def rel trace erster iso}
	(\alpha_* \mc O_X \otimes_{\mc O _Y} \omega_Y)^{\sim} \isoauf \omega_X.
\end{equation}
Since $\alpha$ is finite flat, there is the usual trace pairing 
\begin{equation}\label{def rel trace dritter iso}
	\alpha_* \mc O_X \nach {\sHom}_Y(\alpha_* \mc O_X, \mc O_Y).
\end{equation}
Finally, since $\alpha$ is finite flat, the natural map
\begin{equation}\label{def rel trace zweiter iso}
	{\sHom}_Y(\alpha_* \mc O_X, \mc O_Y) \otimes_{\mc O_Y} \omega_Y \isoauf {\sHom}_Y(\alpha_* \mc O_X, \omega_Y)
\end{equation}
is an isomorphism. 
{\emph{The relative trace map}} is now defined to be the composite map
\begin{align*}
	\alpha_* \omega_X & \stackrel{(\ref{def rel trace erster iso})}\cong \alpha_*(\alpha_* \mc O_X \otimes_{\mc O _Y} \omega_Y)^{\sim} \\ & \stackrel{(\ref{def rel trace dritter iso})}\nach \alpha_*({\sHom}_Y(\alpha_* \mc O_X, \mc O_Y) \otimes_{\mc O _Y} \omega_Y)^{\sim} \\ & \stackrel{(\ref{def rel trace zweiter iso})} \cong
	\alpha_*{\sHom}_Y(\alpha_* \mc O_X, \omega_Y)^{\sim} \\ & \stackrel{\phantom{(\ref{def rel trace zweiter iso})}} =
	{\sHom}_Y(\alpha_* \mc O_X, \omega_Y) \xrightarrow{g \auf g(1)} \omega_Y
\end{align*}
and is denoted by $t_\alpha$.

We aim for a more down-to-earth description of the relative trace map in terms of affinoids. Let $V=\Sp(A)$ be an affinoid in $Y$ and $U=\invers{\alpha}(V)=\Sp(B)$ its preimage in $X$. The associated ring morphism $A \nach B$
is, in particular, finite and flat. 
Since $A$ is Noetherian, the finite flat ring morphism $A \nach B$ makes $B$ into a finitely presented flat $A$-module, i.e.\ a finitely generated projective $A$-module. Hence the natural map 
\begin{align*}
	\tn{can} \colon B^\ast \tens A B & \isoauf \End_A(B) \\
	f \otimes b & \auf [x \auf f(x) \cdot b]
\end{align*}
is an isomorphism, where $B^*:=\Hom_A(B,A)$. The trace $\Tr_{B/A}$ is now defined as the composite
\[
\begin{tikzcd}
	B \arrow[r] \arrow[rrr, "\Tr_{B/A}"', bend right] & \End_A(B)  \arrow[r, "\invers{\tn{can}}"] &  B^* \otimes_A B \arrow[r] & A
\end{tikzcd}
\]
where the first map sends $b \in B$ to the endomorphism given by multiplication by $b$, and the last map is given by ``evaluation''. If $B$ is free of finite rank over $A$, then $\Tr_{B/A}$ coincides with the usual trace map from linear algebra.  Note that the map (\ref{def rel trace dritter iso}) of coherent sheaves boils down to $B \nach B^*, b \auf \Tr_{B/A}(b \cdot (-))$ at the level of modules.  Therefore, the composite
	\[
\begin{tikzcd}
	\Omega_{B/K}^d \arrow[r, phantom] \arrow[r, "\sim", "(\ref{def rel trace erster iso})"'] & B \otimes_A \Omega_{A/K}^d \arrow[r, "(\ref{def rel trace dritter iso})"'] & B^* \otimes_A \Omega_{A/K}^d \arrow[r, "\sim", "(\ref{def rel trace zweiter iso})"'] & {\Hom_A(B, \Omega_{A/K}^d)} \arrow[r, "f \auf f(1)"] & \Omega_{A/K}^d
\end{tikzcd}
\]
 coincides with the following map 
	\begin{equation}\label{tau vorschrift}
		\begin{aligned}
			\Omega^d_{B/K} = \Omega^d_{A/K} \otimes_A B & \stackrel{\tau}\nach \Omega^d_{A/K} \\
			\omega \otimes b & \auf \Tr_{B/A}(b) \cdot \omega.
		\end{aligned}
	\end{equation}
We record this fact in the following lemma, for ease of reference.
\begin{lemma}\label{tau ist rel trace}
	The restriction of the relative trace map $t_\alpha \colon \alpha_* \omega_U \nach \omega_V$ to affinoids $U=\Sp(B)$ and $V=\Sp(A)$ is associated to the homomorphism of modules $\tau \colon \Omega^d_{B/K} \nach \Omega^d_{A/K}$ from (\ref{tau vorschrift}).
\end{lemma}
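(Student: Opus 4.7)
The plan is to verify the statement by a direct computation on global sections. Since $\alpha$ is finite, $\alpha_*$ on coherent sheaves corresponds to restriction of scalars along $A \nach B$, and the equivalence $(-)^\sim$ on coherent $\alpha_*\mc O_X$-modules amounts, on an affinoid $V=\Sp(A)\subseteq Y$ with preimage $\Sp(B)\subseteq X$, to viewing a $B$-module as itself (now regarded as a coherent sheaf on $\Sp(B)$). Thus the restriction of $t_\alpha$ over $V$ is completely determined by the $A$-module map $\Omega^d_{B/K} \nach \Omega^d_{A/K}$ induced on global sections, and it suffices to chase an elementary tensor $\omega \otimes b$ through the four constituent maps of $t_\alpha$.

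First I would identify each of the maps (\ref{def rel trace erster iso}), (\ref{def rel trace dritter iso}), (\ref{def rel trace zweiter iso}) at the module level. For (\ref{def rel trace erster iso}): since $\alpha$ is étale one has $\Omega^1_{B/A}=0$, so the conormal sequence yields $\Omega^1_{A/K}\otimes_A B \isoauf \Omega^1_{B/K}$, and taking $d$-th exterior powers gives $\Omega^d_{A/K}\otimes_A B \isoauf \Omega^d_{B/K}$. For (\ref{def rel trace dritter iso}): since $A\nach B$ is finite flat with $A$ Noetherian, $B$ is finitely generated projective over $A$, so the map $B \nach B^*=\Hom_A(B,A)$ induced by the trace pairing $(b,b') \auf \Tr_{B/A}(bb')$ sends $b \auf \Tr_{B/A}(b\cdot(-))$. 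For (\ref{def rel trace zweiter iso}): projectivity of $B$ gives the natural isomorphism $B^*\otimes_A \Omega^d_{A/K} \isoauf \Hom_A(B, \Omega^d_{A/K})$, $f\otimes\omega \auf [x \auf f(x)\omega]$.

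Next I would chase $b\otimes\omega \in B\otimes_A \Omega^d_{A/K}$ through: under (\ref{def rel trace dritter iso}) it maps to $\Tr_{B/A}(b\cdot(-))\otimes \omega$; under (\ref{def rel trace zweiter iso}) to $[x \auf \Tr_{B/A}(bx)\cdot\omega]$; and under evaluation at $1$ to $\Tr_{B/A}(b)\cdot\omega$. Comparing with the defining formula (\ref{tau vorschrift}) gives precisely $\tau(\omega\otimes b)$.

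The only part requiring real care is the identification of the sheaf-theoretic maps with their module-level counterparts, in particular checking that the natural isomorphism (\ref{def rel trace zweiter iso}) and the trace pairing (\ref{def rel trace dritter iso}) on sheaves genuinely boil down to the standard constructions for finitely generated projective modules. This is a routine unwinding of definitions once one observes that all the relevant sheaves are coherent and that finite flatness of $\alpha$ guarantees that $B$ is finitely generated projective over $A$ locally on $Y$; no further argument beyond bookkeeping is needed, so I expect no substantive obstacle.
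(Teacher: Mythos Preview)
Your proposal is correct and follows essentially the same approach as the paper: the paper establishes the lemma by the very computation you describe (identifying each of the maps (\ref{def rel trace erster iso}), (\ref{def rel trace dritter iso}), (\ref{def rel trace zweiter iso}) at the module level and observing that the composite sends $\omega\otimes b$ to $\Tr_{B/A}(b)\cdot\omega$), and then simply records the conclusion as the lemma for ease of reference. Your write-up is slightly more detailed in justifying why each sheaf map has the stated module-level description, but the argument is the same.
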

\begin{lemma}\label{lem quotkp}
	If $R'$ is an overring of an integral domain $R$ such that each $r'\in R'$ is integral over $R$ and such that no element of $R \setminus \{0\}$ is a zero divisor in $R'$, then the localization  $R'_{R\setminus \{0\}}$ of $R'$ at the multiplicative subset $R \setminus \{0\}$ coincides with the total ring of fractions $Q(R')$.
\end{lemma}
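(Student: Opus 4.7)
The plan is to show that the natural map $R'_{S}\to Q(R')$ (where $S=R\setminus\{0\}$) is a bijection. First I would note that the hypothesis that no element of $S$ is a zero divisor in $R'$ means precisely that $S$ is contained in the set of non-zero-divisors of $R'$, so there is a canonical ring homomorphism $\psi\colon R'_{S}\to Q(R')$. Injectivity of $\psi$ is formal: if $r'/s$ maps to $0$ in $Q(R')$, then $r'=0$ already in $R'$ since $s$ is not a zero divisor, hence $r'/s=0$ in $R'_{S}$.

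The main content is surjectivity. A general element of $Q(R')$ has the form $a/b$ with $a\in R'$ and $b$ a non-zero-divisor of $R'$, so it suffices to exhibit $1/b$ in the image of $\psi$; equivalently, it suffices to find $c\in R'$ and $s\in S$ with $bc=s$. Here I would invoke integrality: pick a monic polynomial $p(x)=x^{n}+r_{n-1}x^{n-1}+\cdots+r_{1}x+r_{0}\in R[x]$ of minimal degree $n\geq 1$ with $p(b)=0$. Then
\[
b\bigl(b^{n-1}+r_{n-1}b^{n-2}+\cdots+r_{1}\bigr)=-r_{0},
\]
so setting $c:=b^{n-1}+r_{n-1}b^{n-2}+\cdots+r_{1}\in R'$ and $s:=-r_{0}\in R$, we obtain $bc=s$, and $\psi(c/s)=1/b$ in $Q(R')$.

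The only subtle point, and the place to be careful, is the claim that $r_{0}\neq 0$, i.e.\ that $s\in S$. This is where the non-zero-divisor hypothesis on $b$ is crucially used. If $r_{0}$ were zero, then $bc=0$ in $R'$, and since $b$ is a non-zero-divisor of $R'$ this would force $c=0$; but $c$ is a monic polynomial expression in $b$ of degree $n-1$, so $c=0$ would give an integral equation for $b$ of degree strictly less than $n$, contradicting the minimality of $n$. (The degenerate case $R'=0$ is trivial, and if $R'\neq 0$ and $b$ is a non-zero-divisor then $b\neq 0$, so $n\geq 1$ makes sense.) This step is really the only non-formal ingredient, and the argument is essentially the classical proof that an integral extension of a field is a field.
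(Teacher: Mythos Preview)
Your proof is correct and follows the standard argument; the paper itself does not give a proof but merely cites the discussion after \cite[3.1.3/Proposition 3]{BGR84}, where exactly this kind of reasoning (using an integral equation of minimal degree to show that every non-zero-divisor of $R'$ divides a nonzero element of $R$) is carried out. One cosmetic remark: your phrase ``$c=0$ would give an integral equation for $b$ of degree strictly less than $n$'' only literally applies when $n\geq 2$; for $n=1$ one has $c=1$, so $c=0$ forces $R'=0$, which you already set aside as trivial.
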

\begin{proof}
	This is proved in the discussion after \cite[3.1.3/Proposition 3]{BGR84}.
\end{proof}
Suppose that there are finite surjective separable morphisms
$
\pi_U \colon U \nach \D^d
$ and $\pi_V \colon V \nach \D^d$ such that $\pi_U=\pi_V \circ \alpha$. Then there is the following compatibility of $\tau$  with $\sigma_U:=\sigma_{\pi_U}$ and $\sigma_V := \sigma_{\pi_V}$ from Definition \ref{def trace sigma}: 
\begin{lemma}\label{lemma sigma}
	The diagram 
	\begin{align*}
		\xymatrix{
			&\Omega^d_{B/K}  \ar[r]^-{\sigma_U} \ar[d]_-{\tau} & \Omega^d_{T_d/K} \\
			&\Omega^d_{A/K}  \ar[ru]_-{\sigma_V}  } 
	\end{align*}
	commutes. By Lemma \ref{tau ist rel trace}, it corresponds to the commutative diagram 
	\begin{align*}
		\xymatrix{
			&{\pi_U}_* \omega_U  \ar[r]^-{\sigma_U} \ar[d]_-{{\pi_V}_*(t_\alpha)} & \omega_{\D^d}\\
			&{\pi_V}_* \omega_V   \ar[ru]_-{\sigma_V}.  } 
	\end{align*}
\end{lemma}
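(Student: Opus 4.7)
The plan is to reduce the commutativity of the first (upper) diagram to the transitivity of trace for the tower of finite separable field extensions $E \subseteq L_V \subseteq L_U$, where $E = Q(T_d)$, $L_V = Q(A)$, $L_U = Q(B)$. Once this is done, the commutativity of the second (lower) diagram is immediate from Lemma \ref{tau ist rel trace}, which identifies $t_\alpha$ locally with $\tau$.

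First I would check that the field-theoretic setup is available. Since $U$ and $V$ are (connected and) smooth affinoid, the rings $A$ and $B$ are integral domains and the finite flat injection $A \hookrightarrow B$ makes $B$ integral over $A$ with no element of $A\setminus\{0\}$ a zero divisor in $B$; by Lemma \ref{lem quotkp} we get $L_U = B \otimes_A L_V$, so $L_U/L_V$ is a finite separable field extension (separability transferring from $L_U/E$). Next, for any $b \in B$ I would note that $\Tr_{B/A}(b) = \Tr_{L_U/L_V}(b)$: the trace of a finitely generated projective module endomorphism is compatible with base change along $A \hookrightarrow L_V$, and this base change turns the $A$-endomorphism ``multiplication by $b$'' on the projective module $B$ into the $L_V$-endomorphism ``multiplication by $b$'' on the $L_V$-vector space $L_V \otimes_A B = L_U$, whose trace is by definition the field trace.

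Now I would unfold the three maps on a pure tensor $\omega \otimes b \in \Omega^d_{E/K} \otimes_E L_U = \Omega^d_{L_U/K}$, using the canonical identification
$$
\Omega^d_{E/K} \otimes_E L_U \;=\; \bigl(\Omega^d_{E/K} \otimes_E L_V\bigr) \otimes_{L_V} L_U \;=\; \Omega^d_{L_V/K} \otimes_{L_V} L_U .
$$
Directly from the definitions,
$$
\sigma_U(\omega \otimes b) \;=\; \Tr_{L_U/E}(b)\cdot \omega,
$$
whereas
$$
\tau(\omega \otimes b) \;=\; \Tr_{L_U/L_V}(b)\cdot(\omega \otimes 1) \in \Omega^d_{L_V/K},
$$
and then
$$
\sigma_V\bigl(\tau(\omega \otimes b)\bigr) \;=\; \Tr_{L_V/E}\!\bigl(\Tr_{L_U/L_V}(b)\bigr)\cdot \omega .
$$
The transitivity of the field trace, $\Tr_{L_V/E}\circ \Tr_{L_U/L_V} = \Tr_{L_U/E}$, therefore yields $\sigma_U = \sigma_V \circ \tau$ on $\Omega^d_{B/K}$. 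The second diagram then follows from Lemma \ref{tau ist rel trace}, together with the fact that applying ${\pi_V}_* = ({\pi_U})_\ast / \alpha_\ast$ to $t_\alpha$ recovers the module-level map $\tau$ on global sections.

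The only real obstacle is bookkeeping: making sure the implicit tensor products in the definitions of $\tau$, $\sigma_U$ and $\sigma_V$ line up after identifying $\Omega^d_{L_U/K}$ with $\Omega^d_{L_V/K}\otimes_{L_V} L_U$, and verifying that $\Tr_{B/A}$ and $\Tr_{L_U/L_V}$ genuinely agree on $B$; beyond that the argument is a one-line consequence of the transitivity of trace.
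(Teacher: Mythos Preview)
Your proposal is correct and follows essentially the same route as the paper: both reduce to the identity $\Tr_{Q(B)/Q(T_d)}|_B = \Tr_{Q(A)/Q(T_d)} \circ \Tr_{B/A}$, establish $\Tr_{B/A} = \Tr_{Q(B)/Q(A)}|_B$ via the identification $Q(B) = B \otimes_A Q(A)$ from Lemma~\ref{lem quotkp}, and then invoke transitivity of the field trace. The only cosmetic difference is that the paper verifies the compatibility $\Tr_{B/A} = \Tr_{Q(B)/Q(A)}|_B$ by an explicit diagram chase through $\End_A(B) \cong B^* \otimes_A B$ and its base change to $Q(A)$, whereas you invoke the general fact that the trace of a finitely generated projective module is stable under flat base change; both arguments are standard and equivalent.
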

\begin{proof}
	In the following, we write  $Q(B)^*=\Hom_{Q(A)}(Q(B),Q(A))$. Due to the formulae (\ref{sigma vorschrift}) and (\ref{tau vorschrift}), we see that it suffices to show that 
	\begin{equation}\label{trace eqlty}
		\Tr_{Q(B)/Q(T_d)}|_B={\Tr_{Q(A)/Q(T_d)}}|_A \circ \Tr_{B/A}.
	\end{equation}
	We will show that the restriction of $\Tr_{Q(B)/Q(A)}$ to $B$ coincides with $\Tr_{B/A}$, i.e.\ that the diagram
	\[
	\begin{tikzcd}
		Q(B) \arrow[r]        & \End_{Q(A)}(Q(B)) \arrow[r, "\sim"] & Q(B)^* \otimes_{Q(A)} Q(B) \arrow[r] & Q(A)        \\
		B \arrow[u] \arrow[r] & \End_A(B) \arrow[r, "\sim"]         & B^* \otimes_A B \arrow[r]            & A \arrow[u]
	\end{tikzcd}
	\]
	commutes, whence the desired equality (\ref{trace eqlty}) follows by the transitivity of the trace in towers of field extensions. Lemma \ref{lem quotkp} tells us that $Q(B)=B \tens{A} Q(A)$, so extension of scalars yields the maps
	\begin{alignat*}{3}
		\End_A(B) & \nach \End_{Q(A)}(Q(B)) \quad \quad \tn{  and   } \quad \quad & B^* & \nach Q(B)^*  \\ 
		f & \auf f_Q:=f \otimes \id_{Q(A)}  &	f & \auf f_Q:=f \otimes \id_{Q(A)}.
	\end{alignat*}
	Thus we can expand the above diagram to
	\[
	\begin{tikzcd}
		Q(B) \arrow[r]        & \End_{Q(A)}(Q(B)) \arrow[r, "\sim"]   & Q(B)^* \otimes_{Q(A)} Q(B) \arrow[r] & Q(A)        \\
		B \arrow[u] \arrow[r] & \End_A(B) \arrow[r, "\sim"] \arrow[u] & B^* \otimes_A B \arrow[r] \arrow[u]  & A. \arrow[u]
	\end{tikzcd}
	\]
	Now it is easy to see that each of the three squares in the diagram commute. Indeed, to see, for instance, that the middle square commutes, note that the isomorphism $Q(B) \cong B \tens{A} Q(A)$ means that any element of $Q(B)$ can be written as $\frac{x}{a}$ with $x \in B$ and $a \in A$, and any $\phi_Q$ then acts on it as $\phi_Q(\frac{x}{a})=\frac{\phi(x)}{a}$. The commutativity of the middle square amounts to showing that, if $\phi(x)=\sum_i f_i(x)b_i$ for all $x \in B$, then $\phi_Q(\frac{x}{a})=\sum_i{f_i}_{Q}(\frac{x}{a})\frac{b_i}{1}$ for all $x \in B$ and all $a \in A$. But $\phi_Q(\frac{x}{a})=\frac{\phi(x)}{a}$ and $\sum_i{f_i}_{Q}(\frac{x}{a})\frac{b_i}{1}=\sum_i \frac{f_i(x)}{a}\frac{b_i}{1}=\frac{\sum_i f_i(x)b_i}{a}=\frac{\phi(x)}{a}$ as well, as desired.
\end{proof}
The following {three subsections} are devoted to proving our main result in this section:
\begin{theorem}\label{simpl thm rel tr}
Denote the composite map $H^d_c(X, \omega_X) \isoauf   H^d_c(Y, \alpha_*\omega_X)\xrightarrow{H^d_c(Y,t_\alpha)}   H^d_c(Y, \omega_Y)$ by $q_\alpha$. 
	Then the following diagram commutes:
	\begin{equation}\label{diag0}
		\begin{gathered}
			\xymatrix{
				&H^d_c(Y, \omega_Y)  \ar[rd]^-{t_Y} \\
				&    &K. \\
				&H^d_c(X, \omega_X)  \ar[ru]_-{t_X} \ar[uu]^-{q_\alpha} } 
		\end{gathered}
	\end{equation} 
\end{theorem}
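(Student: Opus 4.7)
The strategy is to reduce (\ref{diag0}) to a local identity on special affinoid wide-open subspaces, where it becomes a direct consequence of the sheaf-level identity established in Lemma \ref{lemma sigma}. Apply Lemma \ref{lem1} to $Y$ to obtain an admissible exhaustion $\{\mathring W_i\}_{i \in \N}$ of $Y$ by special affinoid wide-opens with smooth connected ambient affinoids $W_i$. Since $\alpha$ is finite, $V_i := \alpha^{-1}(W_i)$ is affinoid, and the commutativity of reduction with $\alpha$ shows that $\mathring V_i := \alpha^{-1}(\mathring W_i) = p_{V_i}^{-1}(\widetilde{\alpha}^{-1}(S_i))$ is a special affinoid wide-open in $V_i$, where $S_i \subseteq \widetilde W_i$ is the finite set defining $\mathring W_i$ (and $\widetilde{\alpha}^{-1}(S_i)$ is finite because $\widetilde{\alpha}$ is finite). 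The $\mathring V_i$ form an admissible exhaustion of $X$. Decomposing $V_i = \bigsqcup_{j=1}^{m_i} V_i^{(j)}$ into its finitely many connected components yields $\mathring V_i = \bigsqcup_j \mathring V_i^{(j)}$, with each $V_i^{(j)}$ smooth connected affinoid and each $\alpha_i^{(j)} := \alpha|_{V_i^{(j)}} \colon V_i^{(j)} \to W_i$ finite étale. By Definition \ref{bey trace for stein def} (extended additively over connected components of the ambient affinoid), both $t_X$ and $t_Y \circ q_\alpha$ are recovered as colimits over $i$ of the sums over $j$ of the corresponding local maps, so it suffices to verify the local analogue of (\ref{diag0}) for each $\alpha_i^{(j)}$.

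Fix such an index pair and abbreviate $(W, \mathring W, V, \mathring V, \beta) := (W_i, \mathring W_i, V_i^{(j)}, \mathring V_i^{(j)}, \alpha_i^{(j)})$. Choose a finite surjective separable $\pi_W \colon W \to \D^d$ associated to $\mathring W$ via Remark \ref{rmk separabel}, and set $\pi_V := \pi_W \circ \beta$. Then $\pi_V$ is finite, surjective, and separable (as the composite of an étale with a separable morphism), and the commutativity of reduction with $\beta$ ensures $|\pi_V(\cdot)| < 1$ on the points of $\pi_V^{-1}(0) \cap \mathring V$ (since $\beta$ maps these into $\mathring W$ where $|\pi_W(\cdot)| < 1$), so $\pi_V$ is associated to $\mathring V$ in the sense of Remark \ref{rmk separabel}. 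Lemma \ref{lemma sigma} applied to $\pi_V = \pi_W \circ \beta$ then produces the key sheaf-level identity
\[
\sigma_{\pi_V} \;=\; \sigma_{\pi_W} \circ \pi_{W*}(t_\beta) \colon \pi_{V*}\omega_V \longrightarrow \omega_{\D^d}.
\]

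Applying $H^d_c(\mathring{\D}^d, -)$ and composing with $\res_{\mathring{\D}^d}$ now yields the local analogue of (\ref{diag0}) by inspection of Definition \ref{defi beyers trace map}: using $\pi_{W*}\beta_* = \pi_{V*}$ and the naturality of the inclusion $H^d_c(\mathring V, \omega_V) \hookrightarrow H^d_c(\pi_V^{-1}(\mathring{\D}^d), \omega_V)$ together with the pushforward isomorphism, both $t_V|_{\mathring V}$ and $t_W|_{\mathring W} \circ H^d_c(t_\beta)$ (precomposed with the finiteness isomorphism $H^d_c(\mathring V, \omega_V) \cong H^d_c(\mathring W, \beta_*\omega_V)$) become $\res \circ H^d_c(\sigma_{\pi_V}) \circ (\tn{inclusion}) \circ (\tn{pushforward iso})$; they coincide by the displayed identity. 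Summing over $j$ and passing to the colimit over $i$ produces (\ref{diag0}). The principal obstacle is organizational rather than conceptual: one must arrange a compatible exhaustion of $X$ obtained by pulling back the chosen exhaustion of $Y$, and decompose the pullback affinoids into their connected components, so that Lemma \ref{lemma sigma} (formulated for a connected affinoid domain) applies at each level. Once this setup is in place, Lemma \ref{lemma sigma} supplies all the conceptual content and the remainder is direct bookkeeping through Beyer's construction.
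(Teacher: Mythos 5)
Your proposal is correct in spirit but takes a genuinely different route from the paper's proof, and one point needs more care than you give it.

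\textbf{How the routes differ.} Both proofs begin by reducing to a local statement on special affinoid wide-open subspaces by means of compatible covers of $X$ and $Y$; your decomposition of $\alpha^{-1}(W_i)$ into \emph{all} connected components is a mild variant of the paper's Lemma~\ref{lemma: banger}, which picks one component at a time. The substantive divergence is what happens after that. The paper transports the whole problem into local cohomology: it invokes \cite[Proposition~4.2.10]{Bey97a} to rewrite the trace maps $t_{\mathring W}$ in terms of the residue maps $\res_{z_i}$ on $H^d_{z_i}(\omega_Z)$, uses the density statement (Lemma~\ref{lemma interplay}) to reduce to this dense image, and then proves two diagram compatibilities — the residue diagram (Lemma~\ref{lem3}, via the explicit Koszul/completion presentations in Lemma~\ref{lemma gamma new}) and the square comparing local cohomology with $H^d_c$ (Lemma~\ref{lem4}). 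You instead stay at the level of $H^d_c$ throughout: you unwind Definition~\ref{defi beyers trace map}, use the finiteness isomorphisms $H^d_c(\pi^{-1}(\Dnull^d),\omega_Z)\cong H^d_c(\Dnull^d, \pi_*\omega_Z)$ plus naturality of the clopen inclusions, and substitute the identity $\sigma_{\pi_V}=\sigma_{\pi_W}\circ\pi_{W*}(t_\beta)$ from Lemma~\ref{lemma sigma} to match $t_{\mathring W}\circ q_\beta$ with $t_{\mathring V}$. Both proofs ultimately hinge on Lemma~\ref{lemma sigma} (the paper uses it inside Lemma~\ref{lem3}). Your route is shorter and avoids local cohomology entirely; the paper's route stays inside Beyer's residue framework, which is anyway developed in \S\ref{subsec: res map}--\ref{subsec relation} and reused elsewhere.

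\textbf{The point you gloss over.} You assert that $t_X$ ``is recovered as a colimit over $i$ of the sums over $j$'' of the local traces $t_{\mathring V_i^{(j)}}$. This is true but not immediate from Definition~\ref{bey trace for stein def}, which fixes \emph{one} sequential cover satisfying Lemma~\ref{lem1} (in particular with connected ambient affinoids) and appeals to Serre duality for well-definedness only ``up to a unique automorphism of $\omega_X$''. To identify your additively-extended candidate with Beyer's $t_X$ on the nose, you need to invoke the cover-compatibility result \cite[Corollary~4.2.12]{Bey97a} together with an argument that the trace defined on a disconnected ambient affinoid by summing over its components agrees with the trace defined on each connected component (this is the same additivity observation the paper records explicitly for $t_{\pi'}=\sum_i t_{\pi_i'}$ in Section~\ref{base change section}). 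So the ``bookkeeping'' you defer is real: it hides a consistency argument matching your refinement against a Lemma~\ref{lem1}-cover. With that filled in, your argument goes through.
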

\subsection{Reducing the proof of Theorem \ref{simpl thm rel tr} to special affinoid wide-opens}\label{sec comp coverings}
To reduce the proof of Theorem \ref{simpl thm rel tr} to the case of special affinoid wide-opens, we need the following ``relative version'' of Lemma \ref{lem1}. Recall that $\alpha \colon X \nach Y$ is a finite étale morphism of smooth $d$-dimensional rigid spaces. 
\begin{lemma}\label{lemma: banger}
	There exist admissible covers $\{\mathring{U}_i\}_{i \in \N}$ of $X$ and $\{\mathring{V}_i\}_{i \in \N}$ of $Y$ by special affinoid wide-opens as in Lemma \ref{lem1} with ambient affinoid spaces $U_i \subseteq X, V_i \subseteq Y$, such that $\alpha(\mathring{U}_i) \subset \mathring{V}_i$ and $\alpha(U_i) \subseteq V_i$ and $\alpha \colon {U}_i \nach {V}_i$ is finite étale. More precisely, there is a commutative diagram
	\[
	\begin{tikzcd}
		\mathring{V_i} \arrow[r, "\subseteq"]                     & V_i \arrow[r, "\pi_{V_i}", two heads]                                  & \D^d \\
		\mathring{U_i} \arrow[u, two heads] \arrow[r, "\subseteq"] & U_i \arrow[u, "\alpha", two heads] \arrow[ru, "\pi_{U_i}"', two heads] &  
	\end{tikzcd}
	\]
	for each $i$, where $U_i \subseteq X$ and $V_i \subseteq Y$ are connected smooth affinoids, $\alpha$ is finite étale, $\pi_{V_i}$ and $\pi_{U_i}$ are finite and separable and are associated to $\oo V_i$ resp.\ $\oo U_i$ (in the sense of Lemma \ref{eine implikation fuer spec wide op}), and all arrows with two heads are surjective. 
\end{lemma}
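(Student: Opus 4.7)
The plan is to lift a Stein exhaustion of $Y$ to one of $X$ by pulling back along the finite map $\alpha$, then construct the maps to $\D^d$ on the $Y$-side and transfer them to $X$ by composition with $\alpha$. The main subtlety is that connected affinoids in $Y$ need not have connected preimages in $X$, so the decomposition into connected components has to be tracked through the exhaustion.

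First, choose an admissible cover $\{V_i\}_{i \in \N}$ of $Y$ by connected affinoid subdomains with $V_i \Subset V_{i+1}$, as guaranteed by the Stein hypothesis. Since $\alpha$ is finite, $\alpha^{-1}(V_i)$ is an affinoid subdomain of $X$ (possibly disconnected), and $\alpha^{-1}(V_i) \Subset \alpha^{-1}(V_{i+1})$. Since $X$ and $Y$ are connected and $\alpha$ is finite étale of some constant degree, each $\alpha^{-1}(V_i)$ has finitely many connected components, each mapping surjectively onto $V_i$. The number of such components is non-increasing in $i$, because any component of $\alpha^{-1}(V_i)$ is clopen in $\alpha^{-1}(V_{i+1})$ and thus sits inside exactly one component of $\alpha^{-1}(V_{i+1})$; after passing to a cofinal subsequence one may therefore assume that this number equals a fixed $m$, and by following each component through the inclusions one obtains compatible labelings $\alpha^{-1}(V_i) = \bigsqcup_{k=1}^{m} U_{i,k}$ with $U_{i,k} \Subset U_{i+1,k}$.

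Now mirror the construction in the proof of Lemma \ref{lem1}. Put $F_i := p_{V_i}(V_{i-1}) \subset \wt{V_i}$ (finite because $V_{i-1} \Subset V_i$) and $\oo{V_i} := p_{V_i}^{-1}(F_i)$. Commutativity of reduction with the finite morphism $\alpha \colon U_{i,k} \nach V_i$ yields
\[
\oo{U_{i,k}} \;:=\; \alpha^{-1}(\oo{V_i}) \cap U_{i,k} \;=\; p_{U_{i,k}}^{-1}(\wt{\alpha}^{-1}(F_i)),
\]
a special affinoid wide-open in $U_{i,k}$ with $\alpha(\oo{U_{i,k}}) \subseteq \oo{V_i}$. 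The restriction $\alpha \colon U_{i,k} \nach V_i$ is finite étale (as the restriction of the finite étale map $\alpha^{-1}(V_i) \to V_i$ to a clopen subdomain) and surjective (since $V_i$ is connected and the image of the connected $U_{i,k}$ is a nonempty clopen subset of $V_i$). The inclusion $U_{i-1,k} \subseteq \oo{U_{i,k}}$ then shows that the $\{\oo{U_{i,k}}\}$ admissibly cover $X$.

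For the morphisms to $\D^d$, apply Remark \ref{rmk separabel} to the connected smooth affinoid $V_i$ and any choice of lifts in $V_{i-1} \subseteq V_i$ of the elements of $F_i$: this yields a finite surjective separable $\pi_{V_i} \colon V_i \nach \D^d$ whose reduction sends $F_i$ to $0$, so by Lemma \ref{eine implikation fuer spec wide op} it is associated to $\oo{V_i}$. Set $\pi_{U_{i,k}} := \pi_{V_i} \circ \alpha|_{U_{i,k}}$; this is finite surjective (as a composite of such), and separable because the étaleness of $\alpha$ gives an étale extension $Q(\mc O(V_i)) \nach Q(\mc O(U_{i,k}))$, so the tower $Q(T_d) \nach Q(\mc O(V_i)) \nach Q(\mc O(U_{i,k}))$ is étale. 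Its reduction satisfies $\wt{\pi_{U_{i,k}}}(\wt{\alpha}^{-1}(F_i)) \subseteq \wt{\pi_{V_i}}(F_i) = \{0\}$, so by Lemma \ref{eine implikation fuer spec wide op} it is associated to $\oo{U_{i,k}}$. Reindexing the pairs $(i,k)$ by a single natural number (allowing $V_i$ to appear repeatedly, once for each $k$) then produces the claimed covers. The principal obstacle is the componentwise bookkeeping in the first step; once that is in place, everything else is a direct application of the technology already built in Sections \ref{sec bosch} and \ref{sec: gt}.
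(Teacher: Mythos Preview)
Your approach coincides with the paper's: pull back a Stein exhaustion of $Y$ along $\alpha$, pass to connected components of the preimages, define $\oo U_i$ as the $\alpha$-preimage of $\oo V_i$ inside the chosen component, and set $\pi_{U_i}=\pi_{V_i}\circ\alpha$. The paper simply selects one component of each $\alpha^{-1}(V_i)$ and moves on; you instead track all $m$ of them and reindex at the end.

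There is, however, a gap in your reindexing. The lemma asks for covers \emph{as in Lemma~\ref{lem1}}, hence nested: $\oo U_j\subseteq\oo U_{j+1}$. If $m>1$ the sets $\oo U_{i,k}$ for distinct $k$ are pairwise disjoint, so no enumeration of the pairs $(i,k)$ can produce a nested sequence. The fix is to observe that in fact $m=1$: the idempotents cutting out the $U_{i,k}$ are compatible under restriction and hence assemble to idempotents in $\mathcal O(X)=\varprojlim_i\mathcal O(\alpha^{-1}(V_i))$, which must be trivial since $X$ is connected. With $m=1$ your bookkeeping collapses and the reindexing is vacuous. (The paper glosses over the same point when it asserts that $\{\oo U_i\}$ is the $\alpha$-preimage of $\{\oo V_i\}$ after having replaced $\alpha^{-1}(V_i)$ by a single component.) A smaller slip: a component of $\alpha^{-1}(V_i)$ is not clopen in $\alpha^{-1}(V_{i+1})$ as you write; it is merely connected, which already suffices to place it in a unique component of $\alpha^{-1}(V_{i+1})$.
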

\begin{proof}
	We obtain the existence of $\{\mathring{V}_i\}_{i \in \N}$ by Lemma \ref{lem1}. Moreover, we have a connected 
und smooth affinoid space $V_i=\Sp(A_i)$  with $\mathring{V}_i \subseteq V_i \subseteq Y$, such that $\oo V_i$ is the preimage of finitely many points under the reduction map $p_{V_i}$, say $$\oo V_i=p^{-1}_{V_i}(\{\widetilde{v}_1, \ldots, \widetilde{v}_r\}).$$
The preimage of an affinoid space under a finite morphism is again affinoid, so $U_i':=\alpha^{-1}({V}_i)$ is affinoid. Moreover, the restriction $\alpha \colon U_i' \nach V_i$ is also finite étale. Replacing $U_i'$ with one
of its connected components $U_i$ (which is an affinoid subdomain in $U_i'$, say $U_i=\Sp(B_i)$), the restriction $\alpha \colon U_i \nach V_i$ is again finite étale. Here we use that $U_i$ is \quot{clopen} in $U_i'$ (being a connected component): the restriction is again étale since it arises by composition with the open immersion $U_i \nachinj U_i'$ (which is étale) and, on the other hand, it is again finite since it arises by composition with the closed immersion $U_i \nachinj U_i'$ (which is finite).  The restriction $\alpha \colon U_i \nach V_i$ being finite étale, the associated ring morphism $A_i \nach B_i$ is finite flat. 
We may assume that $U_i \neq \emptyset$. Indeed, since $X \neq \emptyset$, we see that $U_i \neq \emptyset$ for all $i  \gg 0$, so we may re-index and forget the small $i$. Next we argue that $A_i \nach B_i$ is injective. This is true since any flat ring morphism $R \nach S$ from an integral domain $R$ to a ring $S \neq 0$ is necessarily injective, and $A_i$ is a domain since $V_i$ is connected. So $A_i \nach B_i$ is an injective integral morphism, which implies that the map $\Spec(B_i) \nach \Spec(A_i)$ is surjective. Thus $$\alpha \colon U_i \nach V_i$$ is surjective as well, because any prime ideal lying over a maximal ideal in an integral ring extension is necessarily maximal. {Note that $\alpha$ being finite implies that $\wt \alpha$ is also finite (by \cite[6.3.5/Theorem 1]{BGR84}) and that $\alpha$ being surjective implies that $\wt \alpha$ is also surjective (since both $\alpha$ and $p_{V_i}$ are surjective and thus also the composite $p_{V_i} \circ \alpha=\wt{\alpha} \circ p_{U_i}$ is surjective}). 
Since $\wt \alpha$ is finite, $\Omega:=\wt \alpha^{-1}(\{\widetilde{v}_1, \ldots, \widetilde{v}_r\})$ is a finite subset of $\wt U_i$ and hence $$\oo U_i:=p^{-1}_{U_i}(\Omega)$$ is special affinoid wide-open in $U_i$. Then $\alpha(\oo U_i) \subseteq \oo V_i$  because $p_{V_i} \circ \alpha = \wt \alpha \circ p_{U_i}$. In fact, we claim that $\oo U_i = \invers{\alpha}(\oo V_i)$, so the restriction $\oo U_i \nach \oo V_i$  of $\alpha$ is again surjective. To see this, let $u \in \invers{\alpha}(\oo V_i)$, which by definition of $\oo V_i$ means that 
$$
\{\widetilde{v}_1, \ldots, \widetilde{v}_r\} \ni p_{V_i}(\alpha(u))=\wt{\alpha}(p_{U_i}(u)),
$$ 
so $p_{U_i}(u) \in \wt \alpha^{-1}(\{\widetilde{v}_1, \ldots, \widetilde{v}_r\})=\Omega$, i.e.\ $u \in \invers{p_{U_i}}(\Omega)=\oo U_i$ which proves our claim. In particular, it follows that $\{\mathring{U}_i\}_{i \in \N}$  is the preimage of the admissible open cover $\{\mathring{V}_i\}_{i \in \N}$ of $Y$ under $\alpha \colon X \nach Y$, hence it is itself an admissible open cover of $X$. Now take a finite surjective map $\wt \pi_{V_i}\colon \wt{V_i} \nach \mb A^d$ with $\wt \pi_{V_i}(\wt{v}_j)=0$ for all $j=1, \ldots, r$ and a separable lift $\pi_{V_i} \colon V_i \nach \D^d$ (see Remark \ref{rmk separabel}). Define $$\wt \pi_{U_i}:=\wt \pi_{V_i} \circ \wt \alpha.$$
Then $\wt \pi_{U_i}(\Omega)=\wt \pi_{V_i}(\wt \alpha(\Omega)) \subseteq \wt \pi_{V_i}(\{\widetilde{v}_1, \ldots, \widetilde{v}_r\}) = \{0\}.$ {Moreover, $\wt \pi_{U_i}$ is finite surjective since $\wt \pi_{V_i}$ and $\wt \alpha$ are both finite and surjective}. Therefore, any separable lift of $\wt \pi_{U_i}$ satisfies the conditions in the assertion, by Lemma \ref{eine implikation fuer spec wide op}. In particular, the separable lift
$$\pi_{U_i}:= \pi_{V_i} \circ \alpha$$
satisfies the desired conditions. Here we used that $\alpha$ is separable, which is true because it is unramified at all closed points of $U$ and hence necessarily generically unramified.
\end{proof}
Lemma \ref{lemma: banger} reduces us to showing that the diagram 
\begin{equation}\label{dreieck1}
	\begin{gathered}
		\xymatrix{
			&H^d_c(\mathring{V}_i, \omega_Y)  \ar[rd]^-{t_{\mathring{V}_i}} \\
			&    &K \\
			&H^d_c(\mathring{U}_i, \omega_X)  \ar[ru]_-{t_{\mathring{U}_i}} \ar[uu]^-{q_\alpha} } 
	\end{gathered}
\end{equation}
commutes for all $i$. Indeed, taking $\varinjlim_i$ then yields Theorem \ref{simpl thm rel tr} (see Definition \ref{bey trace for stein def}).
\subsection{Utilizing local cohomology}
Consider the diagram from Lemma \ref{lemma: banger} for a fixed $i$, but omit the index $i$ in the notation. Thus $\alpha\colon \oo U \nach \oo{V}$ is the restriction of a surjective finite étale morphism 
$$
	\alpha \colon U=\Sp(B) \nach V=\Sp(A).
$$
The associated ring morphism $A \nach B$
is, in particular, finite and flat. We consider the commutative diagram
\begin{equation}\label{diag dreieck}
	\begin{gathered}
		\begin{tikzcd}
			V \arrow[r, "\pi_V", two heads]                                  & \D^d & \tn{associated to} & A \arrow[d, hook] & {T_d,} \arrow[l, hook] \arrow[ld, hook] \\
			U \arrow[u, "\alpha", two heads] \arrow[ru, "\pi_U"', two heads] &      &          & B                 &                                                                          
		\end{tikzcd}
	\end{gathered}
\end{equation}
where all morphisms are finite and $A$ and $B$ are integral domains. This notation is fixed for the remainder of this section. 
Let
$$
	\{x_1, \ldots, x_r\}=\pi_{U}^{-1}(0) \cap \mathring{U} \quad \tn{   and     } \quad \{x_1, \ldots, x_r,x_{r+1},\ldots, x_s\}=\pi_{U}^{-1}(0).
$$
Note that $\{\alpha(x_1), \ldots, \alpha(x_r)\} \subseteq \pi_{V}^{-1}(0) \cap \mathring{V}$, since $\pi_U=\pi_V \circ \alpha$ by Lemma \ref{lemma: banger}. Moreover, $\{\alpha(x_1),\ldots, \alpha(x_s)\}=\pi_{V}^{-1}(0)$ since $\pi_U=\pi_V \circ \alpha$ and $\alpha$ is surjective. Denote the cardinality of $\{\alpha(x_1),\ldots, \alpha(x_s)\}$ by $s'$ and let 
$$\{y_1, \ldots, y_{s'}\}=\{\alpha(x_1),\ldots, \alpha(x_s)\}=\pi_{V}^{-1}(0)$$ 
with $y_i \neq y_j$ for $i \neq j$. Then $s' \leq s$ and it may happen that $s'<s$. Define $r'$ in the same way for $\{\alpha(x_1), \ldots, \alpha(x_r)\}$ and arrange the ordering of $\{y_1, \ldots, y_{s'}\}$
in the right way so that $\{y_1, \ldots, y_{r'}\}$ equals
$\{\alpha(x_1),\ldots, \alpha(x_r)\}$.
We bring local cohomology into the game: Using the maps from $\S\S$ \ref{subsec: res map}-\ref{subsec relation}, expand the diagram (\ref{dreieck1}) 
to 
\begin{equation}\label{erstes big diag}
	\begin{gathered}
		\xymatrix{
			& \bigoplus\limits_{i=1}^{r'} H_{y_i}^d(\omega_V) \ar@/^5pc/[rrd]^{\sum \tn{res}_{y_i}} \ar[r] &H^d_c(\mathring{V}, \omega_V)  \ar[rd]^-{t_{\mathring{V}}} \ar@{}[rdd]|{?} \\
			& &   &K \\
			&\bigoplus\limits_{i=1}^r H_{x_i}^d(\omega_U) \ar[r]^-{\textnormal{dense}} \ar@/_5pc/[rru]_{\sum \tn{res}_{x_i}} &H^d_c(\mathring{U}, \omega_U)  \ar[ru]_-{t_{\mathring{U}}} \ar[uu]^-{q_\alpha} & } 
	\end{gathered}
\end{equation}
where the lower horizontal map has dense image by Lemma \ref{lemma interplay}. 
We have placed a question mark in the triangle in (\ref{erstes big diag}) for psychological reasons - as a reminder that we need to show that the triangle commutes.
The two outer ``slices''
\begin{align*}
	\xymatrix{
		& & & & & & K \\
		& \! \! \! \! \! \! \! \! \! \! \! \! \! \!  \bigoplus\limits_{i=1}^{r'} H_{y_i}^d(\omega_V) \ar@/^5pc/[rrd]^{\sum \tn{res}_{y_i}} \ar[r] & H^d_c(\mathring{V}, \omega_V)  \ar[rd]^-{t_{\mathring{V}}} & \, \, \, \, \, \, \, \tn{ and} & \! \! \! \! \! \! \! \! \! \bigoplus\limits_{i=1}^r H_{x_i}^d(\omega_U) \ar[r] \ar@/_5pc/[rru]_{\sum \tn{res}_{x_i}}  &H^d_c(\mathring{U}, \omega_U)  \ar[ru]_-{t_{\mathring{U}}} &  \\
		& &   &K  & & &} 
\end{align*}
in (\ref{erstes big diag}) commute by \cite[Proposition 4.2.10]{Bey97a}.
Next, we use the map $\tau \colon \Omega^d_{B/K} \nach \Omega^d_{A/K}$ from (\ref{tau vorschrift}) to obtain an induced map 
\begin{equation}\label{tau_auf_lokalkoho}
	H_{x}^d(\tau) \colon H_{x}^d(\omega_U) \nach H_{\alpha(x)}^d(\omega_V)
\end{equation}
for every point $x \in U$, via Definition \ref{defi-ind-map-localcoho} below. 
Recall that we use the notation $
H_{x}^d(\omega_U)=H_{\mf m_x}^d(\Omega^d_{B/K})$
etc.
\begin{definition}[Induced maps on local cohomology in general]\label{defi-ind-map-localcoho}
	Let $R$ and $S$ be rings, $\varphi \colon R \nach S$ a ring morphism, $\mf b \subseteq S$ an ideal, $M$ an $R$-module and $N$ an $S$-module. Let $\rho \colon N \nach M$ be an $R$-linear map. Then we define
$$
		H_{\mf b}^j(\rho) \colon H_{\mf b}^j(N) \nach H_{\varphi^{-1}(\mf b)}^j(M)
$$
	as follows. The inclusion $\varphi(\varphi^{-1}(\mf b))S \subseteq \mf b$ implies that $\Gamma_{\mf b}(N) \subseteq \Gamma_{\varphi(\varphi^{-1}(\mf b))S}(N)$, hence there is a natural map 
	\begin{equation}\label{first map in functoriality of localcoho}
		H_{\mf b}^j(N) \nach H_{\varphi(\varphi^{-1}(\mf b))S}^j(N).	
	\end{equation}
	Moreover,
	\begin{equation}\label{sec map in funct of localcoho}
		H_{\varphi(\varphi^{-1}(\mf b))S}^j(N) \cong H_{\varphi^{-1}(\mf b)}^j(N)
	\end{equation}
	by independence of base \cite[Proposition 2.14 (2)]{Hun07}. Finally, $\Gamma_{\varphi^{-1}(\mf b)}$ is a functor on $R$-modules and hence gives rise to 
	\begin{equation}\label{third map in funct of localcoho}
		H_{\varphi^{-1}(\mf b)}^j(N) \nach H_{\varphi^{-1}(\mf b)}^j(M).
	\end{equation}
	The desired map $H_{\mf b}^j(\rho)$ is the composite of these three maps.
\end{definition}
Choosing $R=A, S=B, \mf b=\mf m_x, M=\Omega^d_{A/K}, N=\Omega^d_{B/K}$ and $\rho=\tau$ in Definition \ref{defi-ind-map-localcoho}  yields $H_{x}^d(\tau)$. For each $i \in \{1,\ldots, s'\}$ we consider the map
$$
	\bigoplus\limits_{x \in \alpha^{-1}(y_i)} H_{x}^d(\omega_U) \xrightarrow{\sum_{x} H_{x}^d(\tau)} H_{y_i}^d(\omega_V).
$$
\begin{lemma}[Explicit description of $\sum_{x} H_{x}^d(\tau)$]\label{lem-exp-descr}
	Let $M=\Omega^d_{A/K}, N=\Omega^d_{B/K}$ and consider the map $\tau \colon N \nach M$ from  (\ref{tau vorschrift}). Let  $\mf m_y \subseteq A$ be a maximal ideal that pulls back to $\mf m$ in $T_d$, where $\mf m$ denotes the ideal corresponding to the point $0 \in \D^d$. For every $ \mf m_x \subseteq B$ that pulls back to $\mf m_y$, taking completions in the ring diagram (\ref{diag dreieck}) yields the diagram
\begin{equation*}
	\nonumber
		\begin{gathered}
			\begin{tikzcd}
				\com{A}{\mf m_y} \arrow[d] & \com{T_d}{\mf m}, \arrow[l] \arrow[ld] \\
				\com{B}{\mf m_x}          &                                                                          
			\end{tikzcd}
		\end{gathered}
\end{equation*}
	where all morphisms 
	are finite.
	Let $X_1, \ldots, X_d$ be a system of parameters for $\com{T_d}{\mf m}$. Then the images in $\com{A}{\mf m_y}$ resp.\ $\com{B}{\mf m_x}$ are also a system of parameters 
	 and the map
$$
		\bigoplus\limits_{x \in \alpha^{-1}(y)} H_{x}^d(N) \xrightarrow{\sum_{x} H_{x}^d(\tau)} H_{y}^d(M)
$$
	identifies, via 
Lemma \ref{koszul coho}, with the map 
	\begin{equation}\label{map endgame}
		\varinjlim_\rho  \com{N}{\mf m_{y}} / (X_1^\rho, \ldots, X_n^\rho) \xrightarrow{\com{\tau}{\mf m_y}} \varinjlim_\rho  \com{M}{\mf m_{y}} / (X_1^\rho, \ldots, X_n^\rho).
	\end{equation}
More precisely: The diagram 
	\begin{equation}\label{diag completions rel trace}
		{\scalefont{0.92}
		\begin{tikzcd}
			\bigoplus\limits_{x \in \alpha^{-1}(y)} H_{\wh{\mf m_x}}^d(\wh{N}) \arrow[r, rightarrow, "\sum_x (\ref{first map in functoriality of localcoho})"', "\cong"] &  {H_{\wh{\mf m_y B}}(\com{N}{\mf m_y B})} \arrow[r, Rightarrow, no head, "(\ref{sec map in funct of localcoho})"'] &   {H_{\wh{\mf m_y}}(\com{N}{\mf m_y})} \arrow[r, "\com{\tau}{\mf m_y}", "\tblack{(\ref{third map in funct of localcoho})}"']  \arrow[d, Rightarrow, "\tn{Lemma } \ref{koszul coho}"' , no head] & {H_{\wh{\mf m_y}}(\com{M}{\mf m_y})}                        \\
			&                           & \varinjlim_\rho  \com{N}{\mf m_{y}} / (X_1^\rho, \ldots, X_n^\rho) \arrow[r, "\com{\tau}{\mf m_y}"] &  \varinjlim_\rho  \com{M}{\mf m_{y}} / (X_1^\rho, \ldots, X_n^\rho)  \arrow[u, Rightarrow, "\tn{Lemma } \ref{koszul coho}"' , no head] 
		\end{tikzcd}
	}
\end{equation}
commutes and the composite of the arrows in the top row is the map $\sum_{x} H_{x}^d(\tau) $.
\end{lemma}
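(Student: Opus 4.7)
The plan is to first establish the system of parameters claim using finiteness and dimension considerations, then identify the three horizontal arrows composing the top row of \eqref{diag completions rel trace} through a direct product decomposition of the $\mathfrak{m}_y$-adic completion of $B$, and finally conclude the commutativity by naturality of Lemma \ref{koszul coho}.

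For the system of parameters claim, recall from \eqref{diag dreieck} that $T_d \hookrightarrow A$ is finite, so flat base change along $T_d \to \widehat{T_d}^{\mathfrak{m}}$ yields a finite morphism $\widehat{T_d}^{\mathfrak{m}} \to A \otimes_{T_d} \widehat{T_d}^{\mathfrak{m}} = \widehat{A}^{\mathfrak{m} A}$. The latter semi-local ring decomposes as $\prod_{y'} \widehat{A}^{\mathfrak{m}_{y'}}$ with $y'$ ranging over $\pi_V^{-1}(0)$, so in particular $\widehat{A}^{\mathfrak{m}_y}$ is finite over $\widehat{T_d}^{\mathfrak{m}}$. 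Hence the images of $X_1,\ldots,X_d$ generate the ideal $\mathfrak{m}\,\widehat{A}^{\mathfrak{m}_y}$, whose quotient is a finite-dimensional $K$-algebra and thus Artinian; this ideal is therefore primary to $\widehat{\mathfrak{m}_y}$. Since $\dim \widehat{A}^{\mathfrak{m}_y} = d$ by smoothness and equidimensionality of $A$, the $X_i$ form a system of parameters in $\widehat{A}^{\mathfrak{m}_y}$. The same argument applies verbatim to $\widehat{B}^{\mathfrak{m}_x}$ via $T_d \hookrightarrow B$.

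Next, I would analyse the top row of \eqref{diag completions rel trace}. Because $B$ is finite over $A$ and $\mathfrak{m}_y$ is maximal in $A$, the completion $\widehat{B}^{\mathfrak{m}_y B} = B \otimes_A \widehat{A}^{\mathfrak{m}_y}$ is semi-local Noetherian with maximal ideals exactly the $\mathfrak{m}_x$ for $x \in \alpha^{-1}(y)$, and splits as $\prod_x \widehat{B}^{\mathfrak{m}_x}$; correspondingly $\widehat{N}^{\mathfrak{m}_y B} = \bigoplus_x \widehat{N}^{\mathfrak{m}_x}$. Combining the insensitivity of local cohomology to completion (\cite[Proposition 2.15]{Hun07}), the identity $\sqrt{\mathfrak{m}_y B} = \bigcap_x \mathfrak{m}_x$ from integrality of $A \to B$, and the Mayer--Vietoris decomposition $H^d_{\bigcap_x \mathfrak{m}_x}(N) = \bigoplus_x H^d_{\mathfrak{m}_x}(N)$ for pairwise coprime maximal ideals, the sum of the first arrows \eqref{first map in functoriality of localcoho} is identified with the natural isomorphism $\bigoplus_x H^d_{\mathfrak{m}_x}(N) \xrightarrow{\sim} H^d_{\mathfrak{m}_y B}(N)$. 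The equality \eqref{sec map in funct of localcoho} is the independence-of-base isomorphism \cite[Proposition 2.14(2)]{Hun07}, and \eqref{third map in funct of localcoho} is $\widehat{\tau}^{\mathfrak{m}_y}$ by Definition \ref{defi-ind-map-localcoho} applied to the $A$-linear map $\tau$.

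The commutativity of \eqref{diag completions rel trace} then follows from the naturality of the Koszul cohomology isomorphism of Lemma \ref{koszul coho} under the ring maps $\widehat{T_d}^{\mathfrak{m}} \to \widehat{A}^{\mathfrak{m}_y} \to \widehat{B}^{\mathfrak{m}_y B}$: specialising to $\tau$ shows that $\widehat{\tau}^{\mathfrak{m}_y}$ acts on Koszul presentations by applying $\tau$ to the quotients $\widehat{N}^{\mathfrak{m}_y}/(X_1^\rho,\ldots,X_d^\rho)$, which gives exactly the componentwise description \eqref{map endgame} once unpacked through the splitting $\widehat{N}^{\mathfrak{m}_y} = \bigoplus_x \widehat{N}^{\mathfrak{m}_x}$. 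The main obstacle I anticipate is the bookkeeping required to verify that the direct-sum isomorphism $\bigoplus_x H^d_{\mathfrak{m}_x}(N) \cong H^d_{\mathfrak{m}_y B}(N)$ coming from the support-decomposition argument matches, through the Koszul presentation, the direct sum of the individual Koszul presentations for each $\widehat{N}^{\mathfrak{m}_x}$; this reduces to checking that the transition maps $m \mapsto (X_1 \cdots X_d)^{\rho_2 - \rho_1} m$ in the colimit of Lemma \ref{koszul coho} respect the product splitting of the completion, which is straightforward but tedious.
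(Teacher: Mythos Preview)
Your proposal is correct and follows essentially the same approach as the paper: standard commutative algebra for the finiteness and system-of-parameters claims, the Mayer--Vietoris decomposition for the first isomorphism in the top row, unwinding Definition~\ref{defi-ind-map-localcoho} to identify the composite with $\sum_x H^d_x(\tau)$, and naturality of the Koszul presentation for the commutativity. The paper's own proof is in fact terser than yours, deferring the commutative-algebra facts to an external reference and invoking ``the same Mayer--Vietoris argument that lies behind Lemma~\ref{lemma gamma new}~(\ref{rm lc may vie})'' rather than spelling out the product decomposition; your anticipated bookkeeping obstacle is not addressed explicitly in the paper either.
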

\begin{proof}
The assertions about completions preserving finiteness of morphisms and systems of parameters are proved using standard facts from commutative algebra, see \cite[Proposition 1.67 and Corollary 1.68]{Mal23} for details. 	Next, the first map
 in the top row of the diagram (\ref{diag completions rel trace}) 	is an isomorphism by the same Mayer-Vietoris argument that lies behind Lemma \ref{lemma gamma new} (\ref{rm lc may vie}). 
		By going through Definition \ref{defi-ind-map-localcoho}, 
	we see that the map $\sum_{x} H_{x}^d(\tau) $ is the composite of the arrows in the top row of the diagram  (\ref{diag completions rel trace}). 
	On the other hand, applying Lemma \ref{koszul coho} to the last arrow in the top row of the diagram produces the map (\ref{map endgame}), as required. 
\end{proof}
Taking the direct sum over all $i=1, \ldots, r'$ we find the map
$$
	\bigoplus\limits_{i=1}^r H_{x_i}^d(\omega_U)=\bigoplus\limits_{i=1}^{r'} \bigoplus\limits_{x \in \alpha^{-1}(y_i)} H_{x}^d(\omega_U) \nach \bigoplus\limits_{i=1}^{r'} H_{y_i}^d(\omega_V)
$$
which we denote by $\oplus_i H_{x_i}^d(\tau)$, abusing the notation. This yields the diagram
\begin{equation}\label{fett-diag2}
	\begin{gathered}
		\xymatrix{
			& \bigoplus\limits_{i=1}^{r'} H_{y_i}^d(\omega_V) \ar@{}[rdd]|{?} \ar@/^5pc/[rrd]^{\sum \tn{res}_{y_i}} \ar[r]  &H^d_c(\mathring{V}, \omega_V)  \ar[rd]^-{t_{\mathring{V}}} \ar@{}[rdd]|{?} \\
			& &    &K \\
			&\bigoplus\limits_{i=1}^r H_{x_i}^d(\omega_U) \ar[r]^-{\textnormal{dense}} \ar@/_5pc/[rru]_{\sum \tn{res}_{x_i}} \ar^{\oplus_i H_{x_i}^d(\tau)}[uu] &H^d_c(\mathring{U}, \omega_U)  \ar[ru]_-{t_{\mathring{U}}} \ar[uu]^-{q_\alpha} &} 
	\end{gathered}
\end{equation} 
with outer semicircle
\begin{equation}\label{kreis2}
	\begin{gathered}
		\xymatrix{
			&\bigoplus\limits_{i=1}^r H_{x_i}^d(\omega_U)  \ar^{\oplus_i H_{x_i}^d(\tau)}[rr] \ar@/_1pc/_{\sum \tn{res}_{x_i}}[rd] &\ar@{}[d]|{?} & \bigoplus\limits_{i=1}^{r'} H_{y_i}^d(\omega_V) \ar@/^1pc/^{\sum \tn{res}_{y_i}}[ld] \\
			& &K &}
	\end{gathered}
\end{equation}
where question marks are again placed as a reminder that commutativity needs to be shown. \\
We now explain how the proof of Theorem \ref{simpl thm rel tr} reduces to showing the commutativity of the outer semicircle (\ref{kreis2}) and the commutativity of the square from diagram (\ref{fett-diag2}), which is then done in Lemma \ref{lem3} and Lemma \ref{lem4} below in the next subsection. 
\begin{proof}[Proof of Theorem \ref{simpl thm rel tr}]
	We consider the diagram (\ref{fett-diag2}).
	The objective is to show that the triangle with the question mark commutes. Since the lower horizontal map (call it $\eta$) has dense image, it suffices to show that 
	\begin{equation}\label{final eq to show}
		t_{\mathring{U}} \circ \eta = t_{\mathring{V}} \circ q_{\alpha} \circ \eta.
	\end{equation}
	By the commutativity of the lower ``slice'' in the diagram, we have $t_{\mathring{U}} \circ \eta = \sum \tn{res}_{x_i}$. On the other hand, $t_{\mathring{V}} \circ q_{\alpha} \circ \eta$ is equal to the composite of $\sum \res_{y_i}$ with $ \oplus_i H_{x_i}^d(\tau)$ by the commutativity of the square (Lemma \ref{lem4} below) and the commutativity of the upper ``slice''. But the composite of $\sum \res_{y_i}$ with $ \oplus_i H_{x_i}^d(\tau)$ then coincides with $\sum \tn{res}_{x_i}$ by the commutativity of the outer semicircle (Lemma \ref{lem3} below). Hence both sides of (\ref{final eq to show}) are equal to $\sum \tn{res}_{x_i}$, which completes the proof that (\ref{dreieck1}) commutes and thus the proof of Theorem \ref{simpl thm rel tr}. 
\end{proof}
\subsection{Proving the missing lemmas}
We complete the proof of Theorem \ref{simpl thm rel tr} with two lemmas:
\begin{lemma}\label{lem3}
	The outer semicircle (\ref{kreis2}) commutes.
\end{lemma}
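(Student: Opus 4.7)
The plan is to factor both sides of
\[
\sum_{i=1}^r \res_{x_i} \;=\; \sum_{j=1}^{r'} \res_{y_j} \circ \bigl(\oplus_i H_{x_i}^d(\tau)\bigr)
\]
through the single residue map $\res_0$ on $\D^d$, reducing everything to an identity at the level of local cohomology at $0 \in \D^d$ where it follows from the naturality of $\gamma$ together with the compatibility $\sigma_U = \sigma_V \circ \tau$. I would first establish an \emph{enlarged} version of the identity with sums running over all of $\pi_U^{-1}(0) = \{x_1, \ldots, x_s\}$ and $\pi_V^{-1}(0) = \{y_1, \ldots, y_{s'}\}$, then specialize to $i \le r$, $j \le r'$.

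For the enlarged identity, apply Lemma \ref{lemma gamma new}(\ref{rm lc dreieck nummer}) separately to $\pi_U$ and $\pi_V$ to obtain $\sum_{i=1}^s \res_{x_i} = \res_0 \circ H_0^d(\sigma_U) \circ \gamma_U$ and $\sum_{j=1}^{s'} \res_{y_j} = \res_0 \circ H_0^d(\sigma_V) \circ \gamma_V$. Next, Lemma \ref{lemma sigma} gives $\sigma_U = \sigma_V \circ \tau$ as $T_d$-linear maps; applying $H^d_\mf{m}$, where $\mf m \subseteq T_d$ is the ideal at $0$, and invoking independence of base to identify $H^d_\mf{m}$ of an $A$- or $B$-module with its $H_0^d$ viewed through $\pi_{V*}$ or $\pi_{U*}$, one obtains $H_0^d(\sigma_U) = H_0^d(\sigma_V) \circ H_0^d(\tau)$. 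The crucial intertwining square is
\begin{equation*}
\gamma_V \circ \bigl(\oplus_{i=1}^s H_{x_i}^d(\tau)\bigr) \;=\; H_0^d(\tau) \circ \gamma_U,
\end{equation*}
which is precisely what Lemma \ref{lem-exp-descr} delivers when applied at each $y_j \in \pi_V^{-1}(0)$: the lemma matches each block $\sum_{x \in \alpha^{-1}(y_j)} H_x^d(\tau)$ with $\com{\tau}{\mf m_{y_j}}$ through the Koszul presentation, and the Mayer--Vietoris decomposition underlying $\gamma_U, \gamma_V$ (cf.\ Lemma \ref{lemma gamma new}(\ref{rm lc may vie})) reassembles these into the global map $H_0^d(\tau)$. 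Chaining the three identities produces the enlarged version $\sum_{i=1}^s \res_{x_i} = \sum_{j=1}^{s'} \res_{y_j} \circ \bigl(\oplus_{i=1}^s H_{x_i}^d(\tau)\bigr)$.

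To restrict to the subsums, I would exploit the block-diagonal structure of $\oplus H_{x_i}^d(\tau)$. By Lemma \ref{lemma: banger}, $\oo U = \invers{\alpha}(\oo V)$ and $\alpha(\oo U) \subseteq \oo V$, so for $i \in \{1, \ldots, r\}$ the image $\alpha(x_i)$ lies in $\{y_1, \ldots, y_{r'}\}$, while for $i \in \{r+1, \ldots, s\}$ it lies in $\{y_{r'+1}, \ldots, y_{s'}\}$. Since $H_{x_i}^d(\tau)$ lands in the summand indexed by $\alpha(x_i)$, applying the enlarged identity to an input concentrated in the first $r$ components makes the right-hand side collapse to $\sum_{j=1}^{r'} \res_{y_j} \circ \bigl(\oplus_{i=1}^r H_{x_i}^d(\tau)\bigr)$, which is the desired commutativity of (\ref{kreis2}). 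The main obstacle is the intertwining square: matching the definition of $\oplus H_{x_i}^d(\tau)$ via Definition \ref{defi-ind-map-localcoho} with the global $H_0^d(\tau)$ through the $\gamma$'s. Lemma \ref{lem-exp-descr} is tailored for exactly this purpose via Koszul complexes, but one must spell out carefully how the direct sum over $y_j$ of the local Koszul identifications reassembles into the global map.
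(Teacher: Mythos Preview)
Your proposal is correct and follows essentially the same strategy as the paper: both factor through $\res_0$ at $0\in\D^d$ via Lemma \ref{lemma gamma new}(\ref{rm lc dreieck nummer}), use Lemma \ref{lemma sigma} for the compatibility $\sigma_U=\sigma_V\circ\tau$, and establish the intertwining of $\oplus_i H_{x_i}^d(\tau)$ with the $\gamma$-isomorphisms through the Koszul description in Lemma \ref{lem-exp-descr}. The paper organizes these three ingredients as three sub-diagrams (1), (2), (3) of one large diagram, whereas you prove the enlarged $s,s'$-identity first (this is the paper's (2)+(3)) and then restrict to $r,r'$ via the block-diagonal structure (this is the paper's (1)); the arguments are otherwise identical.
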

\begin{proof}
	Using the commutative diagram from Lemma \ref{lemma gamma new} (\ref{rm lc dreieck nummer}), we can re-write the diagram (\ref{kreis2}) as
	\begin{align*}
		\xymatrix{
			& \bigoplus\limits_{i=1}^r H_{x_i}^d(\omega_U) \ar[d] \ar[rr]^-{\oplus_i H_{x_i}^d(\tau)} &  &\bigoplus\limits_{i=1}^{r'} H_{y_i}^d(\omega_V) \ar[d]   \\
			& \bigoplus\limits_{i=1}^s H_{x_i}^d(\omega_U) \ar[d]_-{\gamma_U}^-\cong  & & \bigoplus\limits_{i=1}^{s'} H_{y_i}^d(\omega_V)  \ar[d]^-{\gamma_V}_-\cong   \\
			&H_0^d({\pi_U}_* \omega_U) \ar[d]_-{H_0^d(\sigma_U)} & &H_0^d({\pi_V}_* \omega_V) \ar[d]^-{H_0^d(\sigma_V)}  \\
			&H_0^d(\omega_{\D^d}) \ar[rd]_-{\res_0} &  & H_0^d(\omega_{\D^d}) \ar[ld]^-{\res_0} \\
			& & K. & }
	\end{align*}
	To show that this diagram commutes, we expand it so that it has three parts:
	\begin{align*}
		\xymatrix{
			& \bigoplus\limits_{i=1}^r H_{x_i}^d(\omega_U) \ar@{}[rrd]|{(1)} \ar[d] \ar[rr]^-{\oplus_i H_{x_i}^d(\tau)} &  &\bigoplus\limits_{i=1}^{r'} H_{y_i}^d(\omega_V) \ar[d]   \\
			& \bigoplus\limits_{i=1}^s H_{x_i}^d(\omega_U) \ar@{}[rrd]|{(2)} \ar[d]_-{\gamma_U}^-{\cong} \ar[rr]^-{\oplus_i H_{x_i}^d(\tau)} & & \bigoplus\limits_{i=1}^{s'} H_{y_i}^d(\omega_V)  \ar[d]^-{\gamma_V}_-{\cong}   \\
			&H_0^d({\pi_U}_* \omega_U) \ar[d]_-{H_0^d(\sigma_U)} \ar[rr]_-{H_0^d({\pi_V}_*(t_\alpha))} & &H_0^d({\pi_V}_* \omega_V) \ar[d]^-{H_0^d(\sigma_V)}  \\
			&H_0^d(\omega_{\D^d}) \ar[rd]_-{\res_0} \ar@{}[rr]|{(3)} &  & H_0^d(\omega_{\D^d}) \ar[ld]^-{\res_0} \\
			& & K. & }
	\end{align*}
	Then (1) commutes for obvious reasons and (3) commutes by Lemma \ref{lemma sigma}. Thus it remains to prove that (2) commutes. Letting $M=\Omega^d_{A/K}, N=\Omega^d_{B/K}$ and $\mf m$ denote the maximal ideal corresponding to $0 \in \D^d$,  Lemma \ref{lemma gamma new} (\ref{rm lc darst}) says that the square (2) is equivalent to a diagram of the form
	\begin{equation}\label{vorletz diag}
		\begin{gathered}
			\xymatrix{
				& \varinjlim_\rho \bigoplus\limits_{i=1}^s \wh{N_{\mf m_{x_i}}} / (X_1^\rho, \ldots, X_n^\rho)   \ar[rr] & & \varinjlim_\rho \bigoplus\limits_{i=1}^{s'} \wh{M_{\mf m_{y_i}}} / (X_1^\rho, \ldots, X_n^\rho)    \\
				&\varinjlim_\rho \wh{N_{\mf m}} / (X_1^\rho, \ldots, X_n^\rho)   \ar[u]_-{\tilde{\gamma}_U^{-1}}^-{\cong} \ar[rr] & & \varinjlim_\rho \wh{M_{\mf m}} / (X_1^\rho, \ldots, X_n^\rho).  \ar[u]_-{{\tilde{\gamma}_V}^{-1}}^-{\cong}  }
		\end{gathered}	
	\end{equation} 
	Moreover, for each $y_i$ we have an isomorphism 
	$	\com{B}{\mf m_{y_i}B} \cong \oplus_{x \in \alpha^{-1}(y_i)}\wh{\loc{B}{\mf m_x}} $
	of $B$-modules, whence
$$
		\wh{N_{\mf m_{y_i}}}=N \tens{B} \com{B}{\mf m_{y_i}B}=\bigoplus\limits_{x \in \alpha^{-1}(y_i)}N \tens{B}\wh{\loc{B}{\mf m_x}}=\bigoplus\limits_{x \in \alpha^{-1}(y_i)}\wh{N_{\mf m_{x}}}.
$$
	Thus
	$$
	\varinjlim_\rho \bigoplus\limits_{i=1}^s \wh{N_{\mf m_{x_i}}} / (X_1^\rho, \ldots, X_n^\rho)  \cong \varinjlim_\rho \bigoplus\limits_{i=1}^{s'} \wh{N_{\mf m_{y_i}}} / (X_1^\rho, \ldots, X_n^\rho)
	$$
	and the diagram (\ref{vorletz diag}) becomes 
	\begin{align*}
		\xymatrix{
			& \varinjlim_\rho \bigoplus\limits_{i=1}^{s'} \wh{N_{\mf m_{y_i}}} / (X_1^\rho, \ldots, X_n^\rho)   \ar[rr]^-{\oplus_i \com{\tau}{\mf m_{y_i}}} & & \varinjlim_\rho \bigoplus\limits_{i=1}^{s'} \wh{M_{\mf m_{y_i}}} / (X_1^\rho, \ldots, X_n^\rho)    \\
			&\varinjlim_\rho \wh{N_{\mf m}} / (X_1^\rho, \ldots, X_n^\rho)   \ar[u]^-{\cong} \ar[rr]^-{\com{\tau}{\mf m}} & & \varinjlim_\rho \wh{M_{\mf m}} / (X_1^\rho, \ldots, X_n^\rho)  \ar[u]^-{\cong}  }
	\end{align*}
	with horizontal maps according to Lemma \ref{lem-exp-descr} and vertical maps according to (\ref{gammatilde vorschrift}) from Lemma \ref{lemma gamma new} (\ref{rm lc darst}). This last diagram obviously commutes, so the proof is complete.
\end{proof}
\begin{lemma}\label{lem4}
	The square
	\begin{align*}
		\xymatrix{
			& \bigoplus\limits_{i=1}^{r'} H_{y_i}^d(\omega_V)  \ar[r]  &H^d_c(\mathring{V}, \omega_V)    \\
			&\bigoplus\limits_{i=1}^{r} H_{x_i}^d(\omega_U) \ar[r] \ar^{\oplus_i H_{x_i}^d(\tau)}[u] &H^d_c(\mathring{U}, \omega_U)  \ar[u]^-{q_\alpha} } 
	\end{align*}
	from Diagram (\ref{fett-diag2}) commutes.
\end{lemma}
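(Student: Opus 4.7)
The plan is to deduce the commutativity of the square from the naturality of the canonical map $H^d_x(\mathcal F) \to H^d_c(\mathring{W}, \mathcal F)$ from \cite[Lemma 4.2.6]{Bey97a} with respect to morphisms of coherent sheaves, applied to the relative trace morphism $t_\alpha \colon \alpha_*\omega_U \to \omega_V$ viewed as a morphism of coherent $\mathcal{O}_V$-modules.

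First I would rewrite both vertical arrows of the square in terms of $t_\alpha$. The right-hand arrow is, by the very definition of $q_\alpha$, the composite
\[
H^d_c(\oo U, \omega_U) \;\cong\; H^d_c(\oo V, \alpha_*\omega_U) \xrightarrow{\;H^d_c(\oo V,\, t_\alpha)\;} H^d_c(\oo V, \omega_V).
\]
For the left-hand arrow I would use, for each $i = 1, \ldots, r'$, the Mayer--Vietoris isomorphism
\[
H^d_{y_i}(\alpha_*\omega_U) \;\cong\; \bigoplus_{x \in \alpha^{-1}(y_i)} H^d_x(\omega_U)
\]
already appearing in the proofs of Lemma \ref{lemma gamma new}(\ref{rm lc may vie}) and Lemma \ref{lem-exp-descr}; note that $\alpha^{-1}(y_i) \subseteq \oo U = \alpha^{-1}(\oo V)$, so these fibers partition $\{x_1, \ldots, x_r\}$. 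Combined with Lemma \ref{tau ist rel trace} (which identifies $t_\alpha$ with $\tau$ at the level of modules) and with the explicit description of Lemma \ref{lem-exp-descr}, this shows that $H^d_{y_i}(t_\alpha)$ corresponds under the above isomorphism precisely to $\sum_{x \in \alpha^{-1}(y_i)} H^d_x(\tau)$; summing over $i$ identifies the left vertical map of the square with $\bigoplus_{i=1}^{r'} H^d_{y_i}(t_\alpha)$.

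With both vertical arrows rewritten in this way, the square in Lemma \ref{lem4} becomes the direct sum over $i = 1, \ldots, r'$ of the naturality squares
\[
\begin{tikzcd}[column sep=1.6cm]
H^d_{y_i}(\alpha_*\omega_U) \arrow[r] \arrow[d, "H^d_{y_i}(t_\alpha)"'] & H^d_c(\oo V, \alpha_*\omega_U) \arrow[d, "H^d_c(\oo V,\, t_\alpha)"] \\
H^d_{y_i}(\omega_V) \arrow[r] & H^d_c(\oo V, \omega_V),
\end{tikzcd}
\]
each of which commutes by the functoriality, in the coherent sheaf argument, of the construction of \cite[Lemma 4.2.6]{Bey97a} applied to $t_\alpha$.

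The main obstacle I anticipate is verifying the compatibility of the horizontal arrows of the original square with the identifications $H^d_c(\oo V, \alpha_*\omega_U) \cong H^d_c(\oo U, \omega_U)$ and $H^d_{y_i}(\alpha_*\omega_U) \cong \bigoplus_{x \in \alpha^{-1}(y_i)} H^d_x(\omega_U)$ used above: concretely, one must check that the canonical map for $\alpha_*\omega_U$ on $\oo V$ at $y_i$ agrees, under these identifications, with the direct sum of the canonical maps for $\omega_U$ on $\oo U$ at the points $x \in \alpha^{-1}(y_i)$. Since $\alpha \colon \oo U \to \oo V$ is finite surjective and $\oo U$ is itself a special affinoid wide-open whose defining finite set of points is exactly $\{x_1, \ldots, x_r\} = \alpha^{-1}(\{y_1, \ldots, y_{r'}\})$, this compatibility should follow by direct inspection of the construction of \cite[Lemma 4.2.6]{Bey97a} together with its naturality under the finite morphism $\alpha$, and should not require any ideas beyond those already used in the proofs of Lemma \ref{lemma gamma new} and Lemma \ref{lem-exp-descr}.
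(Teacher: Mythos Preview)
Your approach is correct and rests on the same underlying principle as the paper's---functoriality of Beyer's canonical map $H^d_\bullet(\mc F) \to H^d_c(\,\cdot\,,\mc F)$ in the coherent sheaf---but the two arguments instantiate it differently. The paper first enlarges the index sets from $r,r'$ to $s,s'$ (passing from $\oo U,\oo V$ to $\oo{\mc U}=\pi_U^{-1}(\Dnull^d),\oo{\mc V}=\pi_V^{-1}(\Dnull^d)$), then pushes everything forward along $\pi_V$ to the single point $0\in\D^d$ via the already-established isomorphism $\gamma$ of Lemma~\ref{lemma gamma new}(\ref{rm lc may vie}); the resulting square
\[
\begin{tikzcd}
H_0^d({\pi_V}_*\omega_V)\arrow[r] & H^d_c(\Dnull^d,{\pi_V}_*\omega_V)\\
H_0^d({\pi_U}_*\omega_U)\arrow[u,"H_0^d({\pi_V}_*(t_\alpha))"']\arrow[r] & H^d_c(\Dnull^d,{\pi_U}_*\omega_U)\arrow[u]
\end{tikzcd}
\]
then commutes by functoriality in ${\pi_V}_*(t_\alpha)$. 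You instead push forward along $\alpha$ to $V$ and invoke functoriality in $t_\alpha$ at each $y_i$ separately. Your route is more direct (no passage through $s,s'$ and no detour to $\D^d$), but the compatibility you flag as an obstacle---that the horizontal maps respect the Mayer--Vietoris and finite-pushforward identifications for $\alpha$---is exactly what the paper sidesteps by reusing the $\gamma$-isomorphisms for $\pi_U,\pi_V$, whose compatibility with the horizontal maps is already packaged into Lemma~\ref{lemma gamma new}. The verification you would need is entirely analogous and no harder; it is simply not already on the shelf.
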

\begin{proof}
	Let $\mathring {\mc V}:=\pi_V^{-1}(\Dnull^d)$ and $\mathring {\mc U}:=\pi_U^{-1}(\Dnull^d)$, then it suffices to show that the following diagram commutes:
	\begin{align*}
		\xymatrix{
			& \bigoplus\limits_{i=1}^{r'} H_{y_i}^d(\omega_V) \ar@{}[rd]|{(1)}  \ar[r] & \bigoplus\limits_{i=1}^{s'} H_{y_i}^d(\omega_V) \ar@{}[rd]|{(2)}  \ar[r]  &H^d_c(\mathring{\mc V}, \omega_V)\\
			& \bigoplus\limits_{i=1}^r H_{x_i}^d(\omega_U) \ar^{\oplus_i H_{x_i}^d(\tau)}[u] \ar[r] &\bigoplus\limits_{i=1}^s H_{x_i}^d(\omega_U) \ar[r]  \ar[u] &H^d_c(\mathring{\mc U}, \omega_U).  \ar[u]^-{q_\alpha}
		 } 
	\end{align*}
Now (1) commutes for obvious reasons, so it remains to prove that (2) commutes. Applying Lemma \ref{lemma gamma new} (\ref{rm lc may vie}) to the left column of (2) and the isomorphisms $H^d_c(\mathring{\mc V}, \omega_V) \cong H^d_c(\Dnull^d, {\pi_V}_* \omega_V)$ and $H^d_c(\mathring{\mc U}, \omega_U) \cong H^d_c(\Dnull^d, {\pi_U}_* \omega_U)$ to the right column, we can re-write (2) as
	\begin{align*}
		\xymatrix{
			&H_0^d({\pi_V}_* \omega_V) \ar[r]  &  H^d_c(\Dnull^d, {\pi_V}_* \omega_V)  \\
			&H_0^d({\pi_U}_* \omega_U)  \ar[u]^-{H_0^d({\pi_V}_*(t_\alpha))} \ar[r]  & H^d_c(\Dnull^d, {\pi_U}_* \omega_U). \ar[u]_-{H_c^d({\pi_V}_*(t_\alpha))} } 
	\end{align*}
	This diagram commutes by the functoriality of the horizontal maps, completing the proof.
\end{proof}
\subsection{Some consequences} Let $\alpha \colon X \nach Y$ be a finite étale morphism of smooth $d$-dimensional Stein spaces over $K$ and let $\mc G$ be a coherent sheaf on $Y$. Let  $\xi_\alpha \colon \mc G \nach \alpha_* \alpha^* \mc G$ be the adjunction morphism   and let $(-)^\vee=\Hom_K^{cont}(-,K)$ denote the continuous dual.  An easy consequence of Theorem \ref{simpl thm rel tr} is:
\begin{proposition}\label{prop compatib}
	The diagram
	\[
	\begin{tikzcd}
		{H^{d-i}_c(X, \mc \alpha^* \mc G)^\vee} \arrow[r, "\sim"] \arrow[ddd] & {\Ext^i_X(\alpha^* \mc G, \omega_X)} \arrow[d, "f \auf \alpha_*(f)"]                                                                                                  \\
		& {\Ext^i_Y(\alpha_* \alpha^* \mc G, \alpha_* \omega_X)} \arrow[d, "{\Ext^i_Y(\alpha_* \alpha^* \mc G, t_\alpha)}"]                                                       \\
		& {\Ext^i_Y(\alpha_* \alpha^* \mc G, \omega_Y                                                                             )} \arrow[d, "{\Ext^i_Y(\xi_\alpha, \omega_Y)}"] \\
		{H^{d-i}_c(Y, \mc G)^\vee} \arrow[r, "\sim"]                        & {\Ext^i_Y(\mc G, \omega_Y)}                                                                                                                                          
	\end{tikzcd}
	\]
	commutes for all $i \geq 0$, where the horizontal isomorphisms come from the Serre duality pairing. 
\end{proposition}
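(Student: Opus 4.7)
The plan is to verify the commutativity of the diagram by testing on an arbitrary pair $(\eta, \phi)$ with $\eta \in H^{d-i}_c(Y, \mc G)$ and $\phi \in \Ext^i_X(\alpha^* \mc G, \omega_X)$, and reducing the resulting scalar identity in $K$ to Theorem \ref{simpl thm rel tr}. The first move is to identify the unnamed left vertical arrow: since Serre duality is a perfect pairing, this arrow is forced to be the transpose, under the horizontal isomorphisms, of the natural pullback
\[
	H^{d-i}_c(Y, \mc G) \xrightarrow{H^{d-i}_c(\xi_\alpha)} H^{d-i}_c(Y, \alpha_* \alpha^* \mc G) \isoauf H^{d-i}_c(X, \alpha^* \mc G),
\]
where the last isomorphism holds because $\alpha$ is finite (so $\alpha_*$ is exact on coherent sheaves and preserves compactly supported cohomology). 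Let $\tilde\eta \in H^{d-i}_c(X, \alpha^* \mc G)$ denote the image of $\eta$ under this composite, and let $\tilde\phi \in \Ext^i_Y(\mc G, \omega_Y)$ denote the image of $\phi$ under the three right-hand vertical maps. Then commutativity of the diagram is equivalent to the scalar equality
\[
	t_X\bigl(\phi(\tilde\eta)\bigr) \;=\; t_Y\bigl(\tilde\phi(\eta)\bigr),
\]
where $\phi$ and $\tilde\phi$ are regarded as morphisms in the appropriate derived categories and applied to cohomology classes via the Yoneda pairing.

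The second step is to unravel $\tilde\phi(\eta)$ by chasing it through the definitions of the three right-hand vertical maps, which yields
\[
	\tilde\phi(\eta) \;=\; H^d_c(t_\alpha)\Bigl((\alpha_* \phi)\bigl(H^{d-i}_c(\xi_\alpha)(\eta)\bigr)\Bigr).
\]
By definition of $\tilde\eta$, the inner term $H^{d-i}_c(\xi_\alpha)(\eta) \in H^{d-i}_c(Y, \alpha_* \alpha^* \mc G)$ corresponds to $\tilde\eta$ under the identification $H^{d-i}_c(Y, \alpha_* \alpha^* \mc G) \cong H^{d-i}_c(X, \alpha^* \mc G)$. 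Moreover, the naturality of the Yoneda pairing with respect to the exact pushforward $\alpha_*$ implies that $(\alpha_* \phi)(\tilde\eta) \in H^d_c(Y, \alpha_* \omega_X)$ corresponds to $\phi(\tilde\eta) \in H^d_c(X, \omega_X)$ under the canonical isomorphism $H^d_c(X, \omega_X) \isoauf H^d_c(Y, \alpha_* \omega_X)$.

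The third and final step is to invoke Theorem \ref{simpl thm rel tr}, which asserts that $t_X = t_Y \circ q_\alpha$, where $q_\alpha$ is by definition the composition of the isomorphism $H^d_c(X, \omega_X) \isoauf H^d_c(Y, \alpha_* \omega_X)$ with $H^d_c(Y, t_\alpha)$. Combining this with the preceding paragraph gives
\[
	t_X\bigl(\phi(\tilde\eta)\bigr) = t_Y\bigl(q_\alpha(\phi(\tilde\eta))\bigr) = t_Y\Bigl(H^d_c(t_\alpha)\bigl((\alpha_*\phi)(\tilde\eta)\bigr)\Bigr) = t_Y\bigl(\tilde\phi(\eta)\bigr),
\]
which is exactly the required identity. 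The only real technical subtlety, and hence the main (though mild) obstacle, is verifying that the map $f \mapsto \alpha_*(f)$ on $\Ext$-groups is compatible both with Yoneda composition and with the canonical identifications of compactly supported cohomology under the exact functor $\alpha_*$. This is standard, however, following from the equivalence of categories between coherent $\mc O_X$-modules and coherent $\alpha_* \mc O_X$-modules recalled at the start of Section \ref{sec: rel trace}.
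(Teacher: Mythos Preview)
Your proof is correct and follows essentially the same approach as the paper: the paper's proof is a one-line deferral to \cite[Lemma 4.2.8]{SV23}, stating that the result follows from Theorem \ref{simpl thm rel tr} together with the naturality of the Yoneda--Cartier pairing and some functoriality properties, which is precisely what you have spelled out in detail. Your explicit unwinding --- identifying the left vertical arrow as the dual of the pullback, reducing to a scalar identity, and invoking Theorem \ref{simpl thm rel tr} via $t_X = t_Y \circ q_\alpha$ --- is exactly the argument the paper gestures at.
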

\begin{proof}
	As explained in the proof of \cite[Lemma 4.2.8]{SV23}, the assertion follows from Theorem \ref{simpl thm rel tr} and the naturality of the Yoneda-Cartier pairing in the coherent sheaf together with some functoriality properties.
\end{proof}
Consider the special case $\mc G= \mc O_Y$. Then $\alpha^* \mc O_Y=\mc O_X$ and hence $\Hom_X(\alpha^* \mc O_Y, \omega_X)=\Hom_X(\mc O_X, \omega_X)=\omega_X(X)$, so the commutativity of the above diagram in particular yields:
\begin{corollary}\label{cor folg}
	The diagram
	\[
	\begin{tikzcd}
		{H^d_c(X, \mc O_X)^\vee} \arrow[r, "\sim"] \arrow[d] & \omega_X(X) \arrow[d, "t_\alpha(Y)"] \\
		{H^d_c(Y, \mc O_Y)^\vee} \arrow[r, "\sim"]           & \omega_Y(Y)                         
	\end{tikzcd}
	\]
	commutes. 
\end{corollary}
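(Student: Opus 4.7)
The plan is to derive the corollary as the special case of Proposition \ref{prop compatib} obtained by setting $\mc G = \mc O_Y$ and $i = 0$. First, I would observe that under these choices we have $\alpha^* \mc O_Y = \mc O_X$ and $\alpha_* \alpha^* \mc O_Y = \alpha_* \mc O_X$, while the Yoneda-to-global-sections identifications
\[
\Ext^0_X(\mc O_X, \omega_X) = \Hom_X(\mc O_X, \omega_X) \isoauf \omega_X(X), \quad f \auf f(1),
\]
and analogously for $Y$, convert the right-hand column of the diagram in Proposition \ref{prop compatib} into a three-step composite between $\omega_X(X)$ and $\omega_Y(Y)$. Thus it suffices to verify that this composite coincides with the map $t_\alpha(Y) \colon \omega_X(X) \to \omega_Y(Y)$.

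Next I would trace through the three vertical arrows on global sections. Given $\omega \in \omega_X(X)$, the corresponding $f_\omega \colon \mc O_X \to \omega_X$ is characterized by $f_\omega(1) = \omega$. Applying $\alpha_*$ yields $\alpha_* f_\omega \colon \alpha_* \mc O_X \to \alpha_* \omega_X$, which on the section $1 \in (\alpha_* \mc O_X)(Y) = \mc O_X(X)$ still takes the value $\omega$. Composing with $t_\alpha \colon \alpha_* \omega_X \to \omega_Y$ and then precomposing with the unit $\xi_\alpha \colon \mc O_Y \to \alpha_* \mc O_X$ (which on global sections sends $1 \mapsto 1$) gives a morphism $\mc O_Y \to \omega_Y$ whose value at $1$ is $t_\alpha(Y)(\omega)$. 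Under the identification $\Hom_Y(\mc O_Y, \omega_Y) \cong \omega_Y(Y)$ this is precisely $t_\alpha(Y)(\omega)$, so the composite of the three vertical maps is indeed $t_\alpha(Y)$.

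With this identification of the two columns established, the commutativity of the diagram in the corollary is exactly the commutativity of the outer rectangle of the diagram in Proposition \ref{prop compatib} for $(\mc G, i) = (\mc O_Y, 0)$. The only mild subtlety worth mentioning is the check that the Serre duality isomorphism $H^d_c(-, \mc O_{(-)})^\vee \isoauf \Ext^0_{(-)}(\mc O_{(-)}, \omega_{(-)})$ in Theorem \ref{thm serre duality} is compatible with the identification $\Ext^0 = \omega(-)$, but this follows from the construction of the Yoneda pairing at $i = 0$. Since I do not anticipate any real obstacle here, the proof is essentially a careful specialization and unpacking.
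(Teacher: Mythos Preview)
Your proposal is correct and follows essentially the same approach as the paper: the corollary is obtained from Proposition \ref{prop compatib} by specializing to $\mc G = \mc O_Y$ and $i = 0$, using the identifications $\alpha^* \mc O_Y = \mc O_X$ and $\Hom_X(\mc O_X, \omega_X) = \omega_X(X)$. You provide more detail than the paper in verifying that the composite of the three right-hand vertical arrows is $t_\alpha(Y)$, which the paper leaves implicit.
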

Note that the domain of $t_\alpha(Y)$ is indeed $\alpha_*\omega_X(Y)=\omega_X(X)$. 
\section{Base-changing Beyer's trace map}\label{base change section}
Throughout this section, we consider the following setting: Let $K'$ be a (not necessarily finite) complete field extension of $K$  and, for any (separated) rigid space $X$ over $K$, let
$$
X':=X \ctens{K} K'
$$
denote the base change of $X$ to $K'$ as in \cite[\S 9.3.6]{BGR84}. If $R$ is a $K$-affinoid algebra, we accordingly use the notation
$$
R':= R \ctens{K} K'.
$$
Finally, let 
$
\mc F \rightsquigarrow \mc F'
$
denote the exact \quot{pullback} functor 	from coherent sheaves on $X$ to coherent sheaves on $X'$.
	In general, 
$
H^j_{c}(X, \mc F) \ctens{K} K' \ncong H^j_{c}(X', \mc F')
$
even when $K'$ is finite over $K$, due to the fact that the left-hand side is Hausdorff whereas the right-hand side can be a non-Hausdorff space, cf.\ \cite[Remark 1.11]{Bos21}. 
However, for a special affinoid wide-open space (resp.\ a Stein space) $S$ over $K$, we discuss comparison maps
$$
H^j_{c}(S, \mc F) \ctens{K} K' \nach {H^j_{c}(S', \mc F')}^\wedge
$$
in {\S}\ref{sec bc maps} (resp.\ {\S}\ref{sec bc trace}) and prove that Beyer's trace map and Serre duality behave well with respect to them.
\subsection{Comparison maps for base change}\label{sec bc maps}
Recall that,
for any rigid space $X$ with a sheaf $\mc F$ of abelian groups and $Z \subseteq X$ a finite union of admissible affinoids, 
$H^j_{Z}(X,\mc F)$ is computed by deriving the left-exact functor 
$
\Gamma_{Z}(X, \mc F):=\ker(\Gamma(X, \mc F) \nach \Gamma(X \setminus Z, \mc F).
$
%
%
Moreover,
$
H^j_c(X, \mc F)=\varinjlim_{Z}H^j_{Z}(X,\mc F)
$
		where the colimit is taken over all subspaces $Z$ of the above form. We record the following obvious fact for future reference: 
	\begin{lemma}\label{cor stein coho}
		Let $S$ be a rigid space, $Z \subseteq S$ a finite union of admissible affinoids and $\mc F$ a coherent sheaf on $S$ such that $H^j(S, \mc F)=0$ for $j \geq 1$. Then the long exact sequence \cite[Remark 1.1.2 (b)]{Bey97a} yields topological isomorphisms
		$$
		H^j_{Z}(S, \mc F) \cong H^{j-1}(S \setminus Z, \mc F) \qquad \tn{ for } j \geq 2
		$$
		and
		$$
		H^1_{Z}(S, \mc F) \cong  H^{0}(S \setminus Z, \mc F)/H^0(S, \mc F).
		$$
	\end{lemma}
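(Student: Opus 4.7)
The plan is to apply the long exact sequence from \cite[Remark 1.1.2 (b)]{Bey97a}, which for the pair $(S, Z)$ takes the shape
\[
\cdots \to H^{j-1}(S,\mc F) \to H^{j-1}(S\setminus Z, \mc F) \to H^{j}_{Z}(S, \mc F) \to H^{j}(S, \mc F) \to \cdots
\]
and simply to read off the asserted isomorphisms using the vanishing hypothesis $H^{j}(S,\mc F)=0$ for $j\geq 1$.

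More concretely, first I would treat the case $j\geq 2$. The relevant piece of the long exact sequence is
\[
H^{j-1}(S,\mc F) \to H^{j-1}(S\setminus Z,\mc F) \to H^{j}_{Z}(S,\mc F) \to H^{j}(S,\mc F),
\]
whose outer two terms vanish by hypothesis (note $j-1\geq 1$). The connecting morphism $H^{j-1}(S\setminus Z,\mc F) \to H^{j}_{Z}(S,\mc F)$ is therefore an algebraic isomorphism. For $j=1$ I would use the beginning of the sequence
\[
0 \to H^{0}_{Z}(S,\mc F) \to H^{0}(S,\mc F) \to H^{0}(S\setminus Z,\mc F) \to H^{1}_{Z}(S,\mc F) \to H^{1}(S,\mc F)=0,
\]
which identifies $H^{1}_{Z}(S,\mc F)$ with the cokernel of the restriction map $H^{0}(S,\mc F)\to H^{0}(S\setminus Z,\mc F)$.

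The only genuinely nontrivial point is the word \emph{topological}. All the maps in the long exact sequence of \cite[Remark 1.1.2 (b)]{Bey97a} are continuous with respect to the natural topologies (cf.\ \cite[\S 1.3]{Bey97a}), so the bijections produced above are continuous. To upgrade them to topological isomorphisms, I would argue that in both cases the inverse is continuous as well: for $j\geq 2$ the isomorphism is induced by a continuous boundary morphism whose inverse is continuous by open mapping / strictness properties available in this setting (since the neighbouring groups vanish, the boundary map is a topological isomorphism of Fréchet/LF-spaces, which is automatic by the open mapping theorem in this category); for $j=1$ it is the quotient map $H^{0}(S\setminus Z,\mc F)\twoheadrightarrow H^{1}_{Z}(S,\mc F)$ which is continuous and open by the very definition of the quotient topology on the image, and the image $H^{0}(S,\mc F) \to H^{0}(S\setminus Z, \mc F)$ carries exactly the subspace topology needed to make the induced map a homeomorphism.

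The main (mild) obstacle is thus only the bookkeeping around the topology, not the algebra: one must make sure that the topologies used on $H^{*}_{Z}$ and on $H^{*}(S\setminus Z,\mc F)$ are precisely those induced from the long exact sequence in \cite[Remark 1.1.2 (b)]{Bey97a}, so that the derived algebraic isomorphisms are automatically bicontinuous. Once this is verified, no further computation is required.
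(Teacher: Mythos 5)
The paper prefaces this lemma with ``We record the following obvious fact for future reference'' and gives no proof, so it treats the statement as an immediate consequence of the long exact sequence of \cite[Remark 1.1.2 (b)]{Bey97a}. Your argument spells out exactly that reasoning: the vanishing of $H^j(S,\mathcal F)$ for $j\geq 1$ produces the two algebraic identifications, and you are right that the only point needing a word is the upgrade from continuous bijection to topological isomorphism — which, in the Fréchet/LF framework of \cite[\S 1.3]{Bey97a} for quasi-Stein $S$ and $S\setminus Z$, is handled by openness of the quotient map in degree~$1$ and by the open mapping theorem (equivalently, strictness of the connecting maps) in degrees~$\geq 2$. This is the same approach as the paper's, with the implicit topological bookkeeping made explicit; there is no gap.
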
	
\begin{proposition}\label{bc spec wide op coho}
	Let $Z$ be a connected smooth affinoid space, $\mc F$ a coherent sheaf on $Z$. Let $\mathring W \subseteq Z$ be a special affinoid wide-open space with associated finite surjective morphism 
$
		\pi \colon Z \nach \D_K^d
$
	whose restriction
	$
	\varpi \colon \oo W \nach \Dnull_K^d
	$ 
	we denote by $\varpi$.
	Let $\varepsilon \in (0,1)$ and set 
$
		S:=\oo W,  V:=\invers{\varpi}(\D^d_K(\varepsilon))$ and $X:=S \setminus V.
$
	Then:
	\begin{enumerate}[(i)]
		\item $V'=\invers{\varpi'}(\D^d_{K'}(\varepsilon))$ and
		\begin{equation}\label{nbc difference}
			X'=S' \setminus V'.
		\end{equation} \label{first statement}
		\item 	
		There is a natural map
		\begin{equation}\label{ncoho base change 1}
			H^j_{V}(S, \mc F) \tens{K} K' \nach H^j_{V'}(S', \mc F')
		\end{equation}
		for all $j \geq 0$. 
		%
		%
		\item Taking $\varinjlim_\varepsilon$ in (\ref{ncoho base change 1}) yields a natural map
		\begin{equation}\label{c coh map}
			H^j_{c}(S, \mc F) \tens{K} K' \nach  H^j_{c}(S', \mc F')
		\end{equation}
		which induces a map on completions
		\begin{equation}\label{c coh compl map}
			H^j_{c}(S, \mc F) \ctens{K} K' \nach  H^j_{c}(S', \mc F')^\wedge.
		\end{equation}
		\item If $K'$ is moreover finite over $K$, then all three maps (\ref{ncoho base change 1}), (\ref{c coh map}) and (\ref{c coh compl map}) are isomorphisms.
	\end{enumerate} 
\end{proposition}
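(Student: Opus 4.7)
Part (i) should follow from general functoriality of base change for morphisms of rigid spaces. Concretely, $\varpi'\colon S' \nach \Dnull_{K'}^d$ is the base change of $\varpi$, and preimages under rigid morphisms commute with base change, giving $V' = (\varpi')^{-1}(\D_{K'}^d(\varepsilon))$. Writing $X = \varpi^{-1}(\Dnull^d \setminus \D^d(\varepsilon))$ and using that base change distributes over set-theoretic complements of admissible opens (here cut out by the strict inequality $\sup_i |\xi_i| > \varepsilon$) then yields $X' = S' \setminus V'$.

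For part (ii), I plan to construct the comparison map at the level of the defining left-exact functors: a section of $\mc F$ on $S$ vanishing on $X = S \setminus V$ pulls back under the projection $S' \nach S$ to a section of $\mc F'$ on $S'$ vanishing on $X' = S' \setminus V'$ by (i), so there is a natural $K$-linear map $\Gamma_V(S,\mc F) \nach \Gamma_{V'}(S',\mc F')$. Tensoring with $K'$ on the left (an exact functor) produces the desired morphism on $H^0$. To pass to higher $j$, I would either derive this natural transformation of left-exact functors, or equivalently exploit the long exact sequence of local cohomology from \cite[Remark 1.1.2(b)]{Bey97a} together with its analogue on $S'$ and the base-change functoriality on ordinary coherent cohomology. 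Part (iii) is then formal: the functor $-\tens{K}K'$ commutes with the filtered colimit defining $H^j_c$ and the maps on the finite level are compatible with the transition maps, so one obtains (\ref{c coh map}), and the completed version (\ref{c coh compl map}) follows by passing to completions.

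The substantive content is part (iv). When $K'/K$ is finite, the completed and ordinary tensor products agree, and coherent cohomology commutes with finite base change at the level of affinoids. The key geometric observation is that both $S = \oo W$ and $X = \varpi^{-1}(\Dnull^d \setminus \D^d(\varepsilon))$ are quasi-Stein: $S$ is an ascending union of affinoid subdomains, and $X$ is the finite preimage of the quasi-Stein annular region $\Dnull^d \setminus \D^d(\varepsilon)$ (itself an ascending union of affinoid annuli). Since coherent cohomology on quasi-Stein spaces commutes with finite base change (via an affinoid exhaustion and Kiehl-type vanishing), I expect $H^j(S,\mc F)\tens{K}K' \isoauf H^j(S',\mc F')$ and $H^j(X,\mc F)\tens{K}K' \isoauf H^j(X',\mc F')$ to hold termwise. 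A five-lemma argument applied to the base-changed long exact sequence of local cohomology for $(S,V,\mc F)$ versus the one for $(S',V',\mc F')$ then upgrades (\ref{ncoho base change 1}) to an isomorphism, which transfers to (\ref{c coh map}) and (\ref{c coh compl map}) by the formalism of part (iii).

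The main obstacle I anticipate is verifying the quasi-Stein property of $X$ and the compatibility of base change with coherent cohomology on $S$ and $X$ in the finite case; these both reduce to affinoid-level statements, but require an explicit affinoid exhaustion and a careful analysis of how $-\tens{K}K'$ interacts with the projective systems of sections on a quasi-Stein exhaustion, as well as checking that all constructed morphisms fit into a commutative ladder to which the five lemma can be applied.
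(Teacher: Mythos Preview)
Your outline for parts (i)--(iii) is essentially in line with the paper, though two points deserve mention. For (ii) the paper does not derive the natural transformation abstractly but instead, for $j\geq 1$, invokes the identification $H^j_V(S,\mc F)\cong H^{j-1}(X,\mc F)$ from Lemma~\ref{cor stein coho} and then constructs the comparison map on $H^{j-1}(X,-)$ explicitly via a \emph{finite Leray cover} $\mf U$ of $X$ (pulled back along $\varpi$ from the cover $U_{i,\varepsilon}=\{\varepsilon<|x_i|\}$ of $\Dnull^d\setminus\D^d(\varepsilon)$) and the resulting \v{C}ech complex. For (iii) the paper also checks that the family $\{V\}_\varepsilon$ (resp.\ $\{V'\}_\varepsilon$) is cofinal among finite unions of admissible affinoids in $S$ (resp.\ $S'$), which is needed to identify the colimit with $H^j_c$; you should not call this purely formal.

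There is a genuine gap in your plan for (iv): the space $X=\varpi^{-1}(\Dnull^d\setminus\D^d(\varepsilon))$ is \emph{not} quasi-Stein for $d\geq 2$. Indeed, via Lemma~\ref{cor stein coho} one has $H^{d-1}(X,\omega_Z)\cong H^d_V(S,\omega_Z)$, and the latter contributes to $H^d_c(\oo W,\omega_Z)$, which is nonzero (the trace map lives on it). A quasi-Stein space has vanishing higher coherent cohomology, so your exhaustion argument cannot apply to $X$. Consequently the step ``$H^j(X,\mc F)\otimes_K K'\isoauf H^j(X',\mc F')$ via quasi-Stein vanishing'' fails, and with it the five-lemma ladder.

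The paper sidesteps this entirely. Using the finite Leray cover $\mf U$ already introduced in (ii), the \v{C}ech groups $\check C^{j-1}(\mf U,\mc F)$ are finite products of sections over affinoids, hence Fr\'echet spaces; one has $\check C^{j-1}(\mf U,\mc F)\ctens{K}K'=\check C^{j-1}(\mf U',\mc F')$ by construction of $\mc F'$. When $K'/K$ is finite, the functors $(-)\otimes_K K'$ and $(-)\ctens{K}K'$ agree on complete locally convex Hausdorff spaces, so the map of complexes $\check C^\bullet(\mf U,\mc F)\otimes_K K'\to\check C^\bullet(\mf U',\mc F')$ is an isomorphism, and passing to cohomology (using flatness of $K\to K'$) gives the result directly---no five lemma and no structural hypothesis on $X$ required.
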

\begin{proof}
	\begin{enumerate}[\normalfont(i)]
		\item Since extension of scalars is compatible with the formation of fiber products, it is in particular compatible with taking preimages under morphisms. Applying this to $\varpi$, we find that
		$$
		V'=(\invers{\varpi}(\D^d_K(\varepsilon)))'=\invers{\varpi'}(\D^d_{K'}(\varepsilon))
		$$
		and
		$$
		X'=(\invers{\varpi}(\Dnull^d_K \setminus \D^d_K(\varepsilon)))'=\invers{\varpi'}((\Dnull^d_K \setminus \D^d_K(\varepsilon))').
		$$
		Next, we claim that 
		\begin{equation}\label{zwischenbeh}
			(\Dnull^d_K \setminus \D^d_K(\varepsilon))'=\Dnull^d_{K'} \setminus \D^d_{K'}(\varepsilon).
		\end{equation}
		To see this, note that
		\begin{equation*}
			\Dnull^d_K \setminus \mb D^d_K(\varepsilon)=\bigcup_{i, \delta} U_{K,i, \delta}  \,\, \tn{ with } \,\, U_{K,i, \delta}:=\{x \in \Dnull^d_K \colon \varepsilon + \delta \leq \btrg{x_i} \leq 1- \delta\}
		\end{equation*}  
		where $i$ runs through $1, \ldots, n$ and $\delta$ runs through a zero sequence. Due to how the base change functor is defined, $(\Dnull^d_K \setminus \D^d_K(\varepsilon))'$ is obtained by gluing the $(U_{K,i, \delta})'$. But $(U_{K,i, \delta})'=U_{K',i, \delta}$, whence (\ref{zwischenbeh}) follows.
		Altogether, we see that
		$$
		X'=\invers{\varpi'}(\Dnull^d_{K'} \setminus \D^d_{K'}(\varepsilon))=S'\setminus \invers{\varpi'}(\D^d_{K'}(\varepsilon))=S' \setminus V',
		$$
		which proves (\ref{nbc difference}).
		\item By definition,  $H^0_{V}(S, \mc F)=\ker(H^0(S, \mc F)\nach H^0(X, \mc F))$ and  $H^0_{V'}(S', \mc F')=\ker(H^0(S', \mc F')\nach H^0(X', \mc F'))$, where the latter equality uses (\ref{nbc difference}). For any rigid space $Y$ and any coherent sheaf $\mc G$ on $Y$, we consider the natural map
		$$ 
		H^0(Y, \mc G) \tens{K} K' \nach H^0(Y, \mc G) \ctens{K} K'  = H^0(Y', \mc G')
		$$
		where the last equality is due to the explicit construction of $\mc G'$. This yields the vertical maps in the commutative diagram
		\begin{equation}\label{j gleich Null diag}
			\begin{gathered}
				\begin{tikzcd}
					H^0(S', \mc F') \arrow[r]           & H^0(X', \mc F')          \\
					H^0(S, \mc F) \tens{K} K' \arrow[u] \arrow[r] & 	H^0(X, \mc F) \tens{K} K'.\arrow[u]
				\end{tikzcd}
			\end{gathered}
		\end{equation}
		By restricting the left vertical map to the kernel of the lower horizontal map (which coincides with $H^0_{V}(S, \mc F) \tens{K} K'$ due to the flatness of $K \nach K'$) and observing that this map then lands in the kernel of the upper horizontal map, we  obtain the desired map (\ref{ncoho base change 1}) for $j=0$. For $j \geq 1$ we can apply Lemma \ref{cor stein coho} and use (\ref{nbc difference})  to see that it is equivalent to produce a natural map
		\begin{equation}\label{coho change}
			H^{j-1}(X, \mc F) \tens{K} K' \nach H^{j-1}(X', \mc F').
		\end{equation}
		To construct the map (\ref{coho change}), we imitate the proof of \citestacks{02KH}, which calls for a finite Leray covering of $X$. 
		To see that there exists a finite Leray covering of $X$, first note that $\varpi$ is a finite morphism. Indeed, since $\oo W$ is a union of connected components of $\invers{\pi}(\Dnull_K^d)$ and hence a clopen subspace, the inclusion $\oo W \nachinj \invers{\pi}(\Dnull_K^d)$  is a closed immersion and in particular a finite map, whence its composite with the finite map $\invers{\pi}(\Dnull_K^d) \nach \Dnull_K^d$ is also finite, i.e.\ \linebreak $\varpi \colon \oo W \nach \Dnull_K^d$ is finite. Now, if we let $\mf W$ be the following finite Leray cover of $\Dnull^d_K \setminus \D^d_K(\varepsilon)$ 
		\begin{equation}\label{eps cover}
\Dnull^d_K \setminus \D^d_K(\varepsilon)=\bigcup_{i=1}^d U_{i, \varepsilon}  \,\, \tn{ where } \,\, U_{i, \varepsilon}:=\{x \in \Dnull^d_K \colon \varepsilon<\btrg{x_i}\},
		\end{equation}
		then $\mf U:=\invers{\varpi}\mf W$ is a finite cover of $X$ which is a Leray cover. Indeed, the latter is due to the fact that, given a space with vanishing higher coherent cohomology, its preimage under any finite morphism also has vanishing higher cohomology.
		Next, the explicit construction of $\mc F'$ and the fact that the completed tensor product commutes with finite  products yields the following relation between \v{C}ech complexes
		\begin{equation}\label{relation of complexes}
			\check{C}^{j-1}(\mf U, \mc F) \ctens{K} K' = \check{C}^{j-1}(\mf U', \mc F').
		\end{equation}
		By precomposing with the map 
		\begin{equation}\label{complex into completed complex}
			\check{C}^{j-1}(\mf U, \mc F) \tens{K} K' \nach 					\check{C}^{j-1}(\mf U, \mc F) \ctens{K} K'
		\end{equation} 
		we obtain a natural map
		\begin{equation}\label{map of complexes}
			\check{C}^{j-1}(\mf U, \mc F) \tens{K} K' \nach \check{C}^{j-1}(\mf U', \mc F').
		\end{equation}
		The cohomology of the left-hand side in (\ref{map of complexes}) is $H^{j-1}(\mf U, \mc F) \tens{K} K'$ (because $K \nach K'$ is flat), so taking cohomology in (\ref{map of complexes}) yields the desired natural map
		\begin{equation}\label{eq of cech cohos}
			H^{j-1}(\mf U, \mc F) \tens{K} K' \nach  H^{j-1}(\mf U', \mc F').
		\end{equation}
		\item We only need to see that the $V$ (resp.\ the $V'$), for varying $0<\varepsilon<1$, form a cofinal subfamily of the family of all finite unions of admissible affinoids in $S$ (resp.\ in $S'$). But the $\Dnull_K^d(\varepsilon)$  form a cofinal subfamily for $\Dnull^d_K$  and one checks that taking preimages under any finite morphism respects such cofinality, which yields the assertion for $V$.  Moreover, the same argument over $K'$, bearing in mind that $V'=\invers{\varpi'}(\D^d_{K'}(\varepsilon))$ by (\ref{first statement}), yields the assertion for $V'$.
		\item 	 Now assume that $K'$ is finite over $K$. Obviously, it suffices to show that the map (\ref{ncoho base change 1}) is an isomorphism, or, equivalently, that (\ref{coho change}) is an isomorphism, or, equivalently, that (\ref{eq of cech cohos}) is an isomorphism. For this, we first claim that the map (\ref{complex into completed complex}) is an isomorphism. Indeed, 
		 $\check{C}^{j-1}(\mf U, \mc F)$ is a Fréchet space (which is in particular Hausdorff and complete) and the functors $K' \tens{K} (-)$ and $K' \ctens{K} (-)$ are isomorphic on complete locally convex Hausdorff spaces due to $K'$ being finite over $K$, whence the claim follows.  Hence the map (\ref{map of complexes}) is also an isomorphism, 
		which, by passage to cohomology, yields that (\ref{eq of cech cohos}) is now an isomorphism. This settles the assertion for $j \geq 1$. The case $j=0$ follows from the fact that both vertical maps in the diagram (\ref{j gleich Null diag}) are now isomorphisms, again because the functors $K' \tens{K} (-)$ and $K' \ctens{K} (-)$ are isomorphic on complete locally convex Hausdorff spaces.
	\end{enumerate}
\end{proof}

\subsection{Special affinoid wide-opens under base change}\label{sec 1 base change}
In this subsection we prove preparatory results for the next subsection.
\begin{lemma}\label{ctens respect fin inj}
	If $R \nach S$ is a finite (resp.\ finite injective) morphism of affinoid $K$-algebras, then the base change $R'=K' \ctens{K} R \nach K' \ctens{K} S = S'$ is also finite (resp.\ finite injective).
\end{lemma}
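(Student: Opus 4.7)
My plan is to handle the finiteness and injectivity claims separately, reducing both to basic properties of the functor $-\ctens{K}K'$ on Banach $K$-modules.

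For finiteness, I would use that $S$ is a finitely generated $R$-module (by hypothesis), equipped with its canonical Banach topology as a finite module over an affinoid algebra. A choice of generators $s_1,\ldots,s_n$ produces a strict $R$-linear surjection $R^n\twoheadrightarrow S$. Applying $-\ctens{K}K'$, which commutes with finite direct sums and preserves strict surjections of Banach $K$-spaces, I obtain a strict surjection $(R')^n\twoheadrightarrow S'$, showing that $S'$ is finitely generated over $R'$.

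For injectivity, the key reduction is the identification $S\ctens{K}K'\cong S\tens{R}R'$. Since $S$ is a finite module over the Noetherian ring $R$, it is finitely presented, so a presentation $R^m\to R^n\to S\to 0$ yields right-exact sequences under both functors $-\ctens{K}K'$ and $-\tens{R}R'$; the two functors agree on the free terms $R^m$ and $R^n$ (both sending $R^k$ to $(R')^k$), and a comparison of cokernels gives the asserted isomorphism. Under this identification, the map $R'\to S'$ becomes the base change of the injection $R\hookrightarrow S$ along $R\to R'$, so its injectivity follows once I know that $R\to R'$ is flat.

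The principal obstacle is thus the flatness of $R\to R\ctens{K}K'$, which I would establish by reducing to the case of Tate algebras. Choose a strict surjection $T_n(K)\twoheadrightarrow R$ with closed kernel $I$; then the identifications $R\ctens{K}K'=T_n(K')/I\cdot T_n(K')=R\tens{T_n(K)}T_n(K')$ allow me to deduce flatness of $R\to R'$ from flatness of the Tate algebra map $T_n(K)\to T_n(K')$ by ordinary base change of flat morphisms. The flatness of $T_n(K)\to T_n(K')$ is a classical fact about base change of affinoid algebras along complete extensions of the ground field, available in \cite{BGR84}. In the special case where $K'/K$ is finite, everything simplifies considerably: on Banach $K$-spaces the functor $-\ctens{K}K'$ coincides with $-\tens{K}K'$, so both assertions follow at once from the (trivial) exactness of tensor product over the field $K$.
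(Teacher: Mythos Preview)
Your proposal is correct and follows essentially the same strategy as the paper: reduce the completed base change $K'\ctens{K}(-)$ to the ordinary tensor product $R'\tens{R}(-)$ on finite $R$-modules, and then invoke flatness of $R\to R'$. The paper carries this out more tersely by citing \cite[Lemma 1.1.5]{Con99} for both ingredients at once (the identification of $R'\ctens{R}(-)$ with $R'\tens{R}(-)$ on finite $R$-modules, and the flatness of $R\to R'$), together with associativity of the completed tensor product \cite[2.1.7/Proposition 7]{BGR84} to pass from $K'\ctens{K}S$ to $R'\ctens{R}S$; you instead rederive these facts by hand via a finite presentation argument and a reduction to Tate algebras. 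Your route is slightly longer but has the merit of being self-contained, while the paper's version is shorter because Conrad's lemma packages exactly what is needed.
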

\begin{proof}
	The map $R'=K' \ctens{K} R \nach K' \ctens{K} S = S'$ can be identified with the map $R' \ctens{R} R \nach R' \ctens{R} S$ by the associativity of the completed tensor product \cite[2.1.7/Proposition 7]{BGR84}. This latter map arises by viewing $R \nach S$ as a map between finite $R$-modules and applying $R' \tens{R} (-)$ to it, because the functors $R' \tens{R} (-)$ and $R' \ctens{R} (-)$ are isomorphic on finite $R$-modules by \cite[Lemma 1.1.5]{Con99}. But $R' \tens{R} (-)$ preserves finiteness (and injectivity too, since $R \nach R'$ is flat by \cite[Lemma 1.1.5]{Con99}), so we are done.
\end{proof}
	\begin{lemma}\label{lem puredim}
		Let $R$ be a reduced affinoid $K$-algebra and $d=\dim R$. The following are equivalent:
		\begin{enumerate}[\normalfont(i)]
			\item $R$ has pure dimension $d$. \label{pd pure dm}
			\item Every finite injective morphism $T_d \nachinj R$ is torsion-free. \label{pd every fin inj}
			\item There exists a finite injective morphism $T_d \nachinj R$ that is torsion-free. \label{pd exist fin inj}
		\end{enumerate}
	\end{lemma}
	\begin{proof}
		Let $\mf p_1, \ldots, \mf p_s$ be the minimal prime ideals of $R$. \\ 
		(\ref{pd pure dm})$\implies$(\ref{pd every fin inj}). Let $T_d \nachinj R$ be a finite injective morphism. We have to show that no element of $T_d$, different from zero, is a zero divisor in $R$.  Since $R$ is reduced, \citestacks{00EW} tells us that the set of zero divisors in $R$ is $\bigcup_{i=1}^s \mf p_i$, so we have to show that $\mf p_i \cap T_d=0$ for each $i$. The finite injective morphism $T_d \nachinj R$ induces a finite injective morphism $T_d/(\mf p_i \cap T_d) \nachinj R/\mf p_i$, so in particular $\dim T_d/(\mf p_i \cap T_d) = \dim R/\mf p_i$. By assumption (\ref{pd pure dm}), the latter is equal to $d$, so there exists a chain of prime ideals $0 \subsetneq \ov {\mf q}_1 \subsetneq \ldots \subsetneq \ov{\mf q}_d$ of length $d$ in $T_d/(\mf p_i \cap T_d)$ and this lifts to a chain of prime ideals $\mf p_i \cap T_d \subsetneq {\mf q}_1 \subsetneq \ldots \subsetneq {\mf q}_d$ of length $d$ in $T_d$. Thus $\mf p_i \cap T_d =0$, as otherwise we would have the chain $0 \subsetneq \mf p_i \cap T_d \subsetneq {\mf q}_1 \subsetneq \ldots \subsetneq {\mf q}_d$ of length $d+1$ in $T_d$. \\
		(\ref{pd every fin inj})$\implies$(\ref{pd exist fin inj}). By Noether normalization, there exists a finite injective morphism $T_d \nachinj R$. This morphism is torsion-free by assumption (\ref{pd every fin inj}). \\
		(\ref{pd exist fin inj})$\implies$(\ref{pd pure dm}). If we have a finite injective morphism $T_d \nachinj R$ which is torsion-free, then the set $\bigcup_{i=1}^s \mf p_i$ of zero divisors in $R$ pulls back to $0$ in $T_d$, so in particular $\mf p_i \cap T_d=0$ for each $i$. Hence the morphism $T_d \nachinj R$ induces a finite injective morphism $T_d \nachinj R/\mf p_i$, so we have $\dim R/\mf p_i=\dim T_d=d$ for all $i=1, \ldots, s$.
	\end{proof}
\begin{lemma}\label{bc separable}
	Let $Z=\Sp(R)$ be a connected smooth affinoid space and 
	$
	\varphi \colon T_d \nachinj R
	$ 
	a finite injective separable morphism. Then the morphism 
	$	
	\varphi' \colon T_d(K') \nachinj R'
	$ 
	obtained by base change to $K'$ is finite, injective and separable.
\end{lemma}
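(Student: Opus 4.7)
The plan is to dispatch finiteness and injectivity of $\varphi'$ immediately via Lemma~\ref{ctens respect fin inj} and concentrate on separability, i.e.\ on verifying that the induced map $Q(T_d(K')) \to Q(R')$ between total rings of fractions is étale. Note that $Q(R')$ must be interpreted as the total ring of fractions since $R'$ may fail to be a domain after base change.

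The first substantive step is to argue that $\varphi \colon T_d \nachinj R$ is automatically \emph{flat}. The associated morphism $\pi \colon Z \to \D^d$ is finite and surjective, $Z$ is Cohen--Macaulay (being smooth), $\D^d$ is regular, and the fibers of $\pi$ all have the constant dimension $\dim Z - \dim \D^d = 0$. The standard miracle-flatness criterion then yields flatness of $\pi$, and hence of $\varphi$, so $R$ is a finitely generated projective $T_d$-module. Because $R$ is finite over $T_d$, associativity of the completed tensor product combined with the fact that completed and ordinary tensor products agree on finite modules yields
$$ R' = R \ctens{K} K' = R \ctens{T_d} T_d(K') = R \otimes_{T_d} T_d(K'), $$
so $R'$ is finitely generated projective, and in particular torsion-free, over $T_d(K')$.

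Combined with the finite injectivity from Lemma~\ref{ctens respect fin inj}, the hypotheses of Lemma~\ref{lem quotkp} are fulfilled for $T_d(K') \nachinj R'$, and applying it (together with the same lemma applied to $\varphi$ itself) gives
$$ Q(R') = R' \otimes_{T_d(K')} Q(T_d(K')) = R \otimes_{T_d} Q(T_d(K')) = Q(R) \otimes_{Q(T_d)} Q(T_d(K')). $$
The separability of $\varphi$ means that $Q(T_d) \to Q(R)$ is étale, and étaleness is stable under arbitrary base change of rings, so the resulting map $Q(T_d(K')) \to Q(R')$ is étale, proving that $\varphi'$ is separable.

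The only non-formal ingredient is the miracle flatness step: converting the smoothness hypothesis on $Z$ into flatness of $\varphi$. Everything else is routine manipulation of completed tensor products combined with the already-established Lemmas~\ref{ctens respect fin inj} and~\ref{lem quotkp}.
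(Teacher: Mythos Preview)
Your proof is correct and follows essentially the same overall strategy as the paper: establish that $\varphi'$ is torsion-free, invoke Lemma~\ref{lem quotkp} to identify $Q(R')$ with $Q(R)\otimes_{Q(T_d)}Q(T_d(K'))$, and then use stability of étaleness under base change. The one genuine point of divergence is how you obtain torsion-freeness of $\varphi'$. The paper argues that $R'$ is reduced and of pure dimension $d$ (pure dimension being preserved under base change by \cite[Lemma~2.5]{Bos70}), and deduces from this that any finite injection $T_d(K')\hookrightarrow R'$ is torsion-free. You instead invoke miracle flatness to upgrade $\varphi$ to a flat (hence finite projective) map, so that $R'=R\otimes_{T_d}T_d(K')$ is finite projective over $T_d(K')$ and in particular torsion-free. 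Your route yields a stronger intermediate conclusion (flatness rather than mere torsion-freeness) at the cost of importing the miracle-flatness criterion, whereas the paper's route stays within the dimension-theoretic language already in use in the surrounding section; both are short and legitimate.
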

\begin{proof}
	The morphism 
	$
	\varphi' \colon T_d(K') \nachinj R'
	$
	obtained by base change to $K'$ is finite injective by Lemma \ref{ctens respect fin inj}. Since $R$ is an integral domain, $R$ has pure dimension $d$, so $R'$ also has pure dimension $d$ by \cite[Lemma 2.5]{Bos70}. Since $R'$ has pure dimension $d$ and is moreover reduced, every finite injective morphism $T_d(K') \nachinj R'$ is torsion-free by Lemma \ref{lem puredim}. In particular, $\varphi'$ is torsion-free, so it induces a morphism
	$$
	Q(T_d(K')) \nachinj Q(R').
	$$
	We need to prove that this morphism is étale. Since $Q(R)$ is a separable field extension of $Q(T_d)$ by assumption, the morphism $$Q(T_d) \nachinj Q(R)$$ is étale. Setting $T_d':=T_d(K')$ and then tensoring the above morphism with $Q(T_d')$, it follows by \citestacks{00U2} that the structure morphism
$$
		Q(T_d') \nachinj Q(T_d') \tens{Q(T_d)} Q(R)
$$
	is étale too. We will show that
	\begin{equation}\label{tens ist totring}
		Q(T_d') \tens{Q(T_d)} Q(R) \cong Q(R'),
	\end{equation}
	whence $Q(R')$ is étale over $Q(T_d')$, completing the proof. \\
	\indent It remains to prove (\ref{tens ist totring}). As in the proof of Lemma \ref{ctens respect fin inj}, we see that the map $T_d' \nachinj R'$ arises by applying $T_d' \tens{T_d} (-)$ to the map $T_d \nachinj R$, i.e.\ 
$$
		R'=T_d' \tens{T_d} R.
$$
	With the multiplicative subset $S:=T_d \setminus\{0\} \subseteq T_d$, we find that
	\begin{align*}
		\invers{S}R' &=	\invers{S}(T_d') \tens{\invers{S}T_d} \invers{S}R \\
		&=\invers{S}(T_d') \tens{Q(T_d)} Q(R) 
	\end{align*}
	where the last equality holds because $\invers{S}T_d=Q(T_d)$ by definition and  $\invers{S}R =Q(R)$ by Lemma \ref{lem quotkp}.
	On the other hand, we also wish to apply Lemma \ref{lem quotkp} to the finite ring extension $\varphi' \colon T_d' \nachinj R'$. This is possible since $\varphi'$ is torsion-free, as we have established above.
	Hence Lemma \ref{lem quotkp} tells us that $Q(R')=\invers{T}R'$ with the multiplicative set $T:=T_d' \setminus \{0\}$. But obviously $\invers{T}R'=\invers{T}(\invers{S}R')$. Altogether, we have
	\begin{align*}
		Q(R') = \invers{T}(\invers{S}R') & = Q(T_d') \tens{T_d'} \invers{S} R' \\
		& = Q(T_d') \tens{T_d'}( \invers{S}(T_d') \tens{Q(T_d)} Q(R) ) \\
		&= (Q(T_d') \tens{T_d'} \invers{S}(T_d')) \tens{Q(T_d)} Q(R) \\
		& = \invers{S}Q(T_d')  \tens{Q(T_d)} Q(R) \\
		&= Q(T_d')  \tens{Q(T_d)} Q(R),
	\end{align*}
	where the last equality holds because $\invers{S}Q(T_d')=Q(T_d')$. This proves (\ref{tens ist totring}) and completes the proof of the lemma.
\end{proof}
\begin{corollary}\label{cor phi_i}
Consider again the setting of Lemma \ref{bc separable}. Let $\mf p_1, \ldots, \mf p_s$ denote the minimal prime ideals in $R'$. Then
$
R'= \prod_{i=1}^{s} R'/\mf p_i
$
and composing $\varphi'$ with the projection $R' \nachsurj R'/\mf{p}_i$ yields a finite {injective} map
$
\varphi'_i \colon T_d(K') \nachinj R'/\mf p_i
$
for all $i$. Hence there is an induced finite field extension $Q(T_d(K')) \nachinj Q(R'/\mf p_i)$ for each $i$, and these extensions are all separable. The natural maps
\begin{equation}\label{totring ist produkt}
	Q(R') \isoauf {R'}_{\mf p_1} \times \ldots \times {R'}_{\mf p_s} \isoauf Q(R'/\mf p_1) \times \ldots \times Q(R'/\mf p_s)
\end{equation}
are isomorphisms.
\end{corollary}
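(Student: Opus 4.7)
The plan is to first establish the product decomposition $R'=\prod_{i=1}^{s}R'/\mathfrak{p}_i$, after which the remaining assertions follow almost formally. The main inputs will be Lemma \ref{bc separable} (which already guaranteed that $R'$ is reduced of pure dimension $d$ and that $Q(R')$ is étale over $F':=Q(T_d(K'))$) together with one additional ingredient: the normality of $R'$.

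First I would observe that $\Sp(R')=\Sp(R)\ctens{K}K'$ is smooth over $K'$ as the base change of the smooth space $\Sp(R)$, so $R'$ is a Noetherian normal ring. In any Noetherian normal ring distinct minimal primes are pairwise comaximal: if $\mathfrak{p}_i+\mathfrak{p}_j$ were contained in a maximal ideal $\mathfrak{m}$, then the localization $R'_{\mathfrak{m}}$ would be a normal local ring, hence a domain, containing two distinct minimal primes, which is absurd. The Chinese Remainder Theorem then yields $R'=\prod_{i=1}^{s}R'/\mathfrak{p}_i$. This is really the only substantive step, and it is where the smoothness hypothesis on $Z$ (via normality of $R'$) is used essentially; the rest is bookkeeping.

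Finiteness of $\varphi'_i$ is automatic as a composition of finite maps. Injectivity follows from equidimensionality: since $\dim(R'/\mathfrak{p}_i)=d=\dim T_d(K')$, the contracted prime $\ker(\varphi'_i)\subseteq T_d(K')$ has a quotient of dimension $d$ inside the $d$-dimensional domain $T_d(K')$, forcing it to be zero. Passing the product decomposition to total rings of fractions yields $Q(R')\cong\prod_{i=1}^{s}Q(R'/\mathfrak{p}_i)$; since $Q(R')$ is étale over $F'$, i.e.\ a finite product of finite separable field extensions of $F'$, each factor $Q(R'/\mathfrak{p}_i)$ must itself be a finite separable extension of $F'$, which gives separability of $\varphi'_i$. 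Finally, the chain of isomorphisms (\ref{totring ist produkt}) follows from the standard identity $Q(R')\cong\prod_{i=1}^{s}R'_{\mathfrak{p}_i}$ valid for any reduced Noetherian ring (each localization at a minimal prime being the residue field there), together with the identification $R'_{\mathfrak{p}_i}=Q(R'/\mathfrak{p}_i)$.
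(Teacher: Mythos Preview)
Your proof is correct and follows essentially the same route as the paper's: both obtain the product decomposition from the fact that the local rings of the smooth space $Z'$ are domains (you phrase this via normality, the paper via smoothness, but the content is identical), and both deduce separability from the \'etaleness of $Q(R')$ over $Q(T_d(K'))$ established in Lemma~\ref{bc separable}. The only genuine difference is in the injectivity of $\varphi'_i$: the paper argues that $\varphi'$ is torsion-free and hence pulls back each minimal prime $\mathfrak p_i$ (consisting of zero divisors) to zero, whereas you use the equidimensionality of $R'$ and a dimension count in $T_d(K')$. Both arguments are valid; the paper's is slightly slicker since torsion-freeness was already recorded in the proof of Lemma~\ref{bc separable}, while yours requires knowing that nonzero primes in $T_d(K')$ have quotients of strictly smaller dimension (true since $T_d(K')$ is regular, hence catenary).
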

\begin{proof}
	Since $Z'$ is smooth, the local rings $\mc O_{Z',z'}$ are integral domains for all $z' \in Z'$. This tells us that the irreducible components of $\Spec(R')$ are pairwise disjoint and hence they coincide with the connected components of $\Spec(R')$. In other words, 
	$
	R'= \prod_{i=1}^{s} R'/\mf p_i.
	$
	Because $R'$ is reduced, $\mf p_1 \cup \ldots \cup \mf p_s$ is the set of zero divisors in $R'$ by \citestacks{00EW}. Since $\varphi'$ is torsion-free (as we have established in the proof of Lemma \ref{bc separable}), it pulls back zero divisors to zero, so in particular it pulls back each $\mf p_i$ to zero, i.e.\ each $\varphi'_i$ is indeed injective. 
	The first map in (\ref{totring ist produkt}) is an isomorphism by \citestacks{02LX}.
The localization of a reduced ring at a minimal prime ideal is a field by \citestacks{00EU}, hence $\loc{R'}{\mf p_i}$ is a field and the natural map $\loc{R'}{\mf p_i} \isoauf Q(R'/\mf p_i)$ is an isomorphism for each $i$, so it follows that the second map in (\ref{totring ist produkt}) is an isomorphism. 
Finally, in general, if $F$ is a field and $A=A_1 \times \ldots \times A_n$ is  a finite product of $F$-algebras, then $A$ is étale over $F$ if and only if each $A_i$ is étale over $F$, see \citestacks{00U2}. Therefore, Lemma \ref{bc separable} tells us (equivalently) that
the finite field extensions 
$
Q(T_d(K')) \nachinj Q(R'/\mf p_i)
$
induced by  $\varphi_i'$ 
are separable for all $i$.	
\end{proof}
\begin{corollary}\label{bc the mor def aff wide op}
	Let $Z=\Sp(R)$ be a connected smooth affinoid space, $\mathring W \subseteq Z$ be a special affinoid wide-open space and $	\pi \colon Z \nach \D_K^d$ an associated finite surjective separable morphism.  Consider the morphism $$	\pi' \colon Z' \nach \D_{K'}^d$$ obtained by base change to $K'$. Then:
	\begin{enumerate}[(i)]
		\item  $\pi'$ is finite, surjective and separable.\label{separable} 
		\item Letting $Z_1', \ldots, Z_s'$ denote the connected components of $Z'$, the restriction 
		$\pi_i' \colon Z_i' \nach \D^d_{K'}$ of $\pi'$ to $Z_i'$ is finite, surjective and separable for each $i$. \label{parts pi_i}
		\item $\oo W' \subseteq Z'$ is a finite union of connected components of $\invers{\pi'}(\Dnull_{K'}^d)$. \label{components}
	\end{enumerate}
\end{corollary}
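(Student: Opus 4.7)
The plan is to treat the three parts in sequence, with (i) and (ii) following directly from the preparatory results of this subsection, and (iii) requiring a slightly more careful argument about clopen subspaces under base change.

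For (\ref{separable}), finiteness of $\pi'$ is immediate from Lemma \ref{ctens respect fin inj}. For surjectivity, since $\pi \colon Z \nach \D_K^d$ is finite and surjective with $Z$ reduced (as $Z$ is smooth), the corresponding ring map $T_d \hookrightarrow R$ is injective. Applying Lemma \ref{ctens respect fin inj} once more, $T_d(K') \hookrightarrow R'$ is finite injective, hence integral; the usual lying-over argument combined with the Jacobson property of affinoid algebras then gives the surjectivity of $\pi'$. Separability of $\pi'$ is precisely the content of Lemma \ref{bc separable}.

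For (\ref{parts pi_i}), I invoke Corollary \ref{cor phi_i}: the decomposition $R' = \prod_{i=1}^s R'/\mf p_i$ identifies the connected components of $Z'$ with the $Z_i' = \Sp(R'/\mf p_i)$, and the restriction $\pi_i'$ corresponds to the finite injective map $\varphi_i' \colon T_d(K') \hookrightarrow R'/\mf p_i$ constructed there. Finiteness and surjectivity of $\pi_i'$ then follow by the same lying-over reasoning as in (i), while the separability of each induced field extension $Q(T_d(K')) \hookrightarrow Q(R'/\mf p_i)$ is part of Corollary \ref{cor phi_i}.

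For (\ref{components}), I first show that $\oo W'$ is a union of connected components of $(\pi')^{-1}(\Dnull_{K'}^d)$. Since base change commutes with fiber products it commutes with taking preimages, and combined with the identity $(\Dnull_K^d)' = \Dnull_{K'}^d$ (which is proved exactly as the claim (\ref{zwischenbeh}) in the proof of Proposition \ref{bc spec wide op coho}), this yields $(\pi^{-1}(\Dnull_K^d))' = (\pi')^{-1}(\Dnull_{K'}^d)$. By Lemma \ref{eine implikation fuer spec wide op}, $\oo W$ is clopen in $\pi^{-1}(\Dnull_K^d)$, and since base change preserves the clopen property, $\oo W'$ is clopen in $(\pi')^{-1}(\Dnull_{K'}^d)$, hence a union of connected components.

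The remaining and most delicate point is the \emph{finiteness} of this union. By (ii), each connected component $Z_i'$ of $Z'$ is a smooth connected affinoid space (smoothness being preserved under base change) and $\pi_i'$ is finite surjective. The preimage $(\pi_i')^{-1}(\Dnull_{K'}^d)$ is the preimage under the reduction map $Z_i' \nach \wt{Z_i'}$ of the finite set $\wt{\pi_i'}^{-1}(0)$, which is finite because $\wt{\pi_i'}$ is finite, and is therefore a special affinoid wide-open in $Z_i'$. Applying Lemma \ref{eine implikation fuer spec wide op} to $Z_i'$ together with $\pi_i'$ shows that $(\pi_i')^{-1}(\Dnull_{K'}^d)$ decomposes into finitely many connected components; summing over the finitely many indices $i$ yields that $(\pi')^{-1}(\Dnull_{K'}^d)$ has only finitely many components in total, so in particular $\oo W'$ is a \emph{finite} union of them. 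The main obstacle throughout is ensuring that the relevant base-change compatibilities (preimages, clopen subspaces, smoothness, connectedness of components) are genuinely preserved at the level of rigid spaces rather than merely at the algebraic level.
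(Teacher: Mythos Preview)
Your argument is correct and follows essentially the same route as the paper for all three parts. Two minor differences worth noting: for the clopen-preservation step in (iii) you assert that ``base change preserves the clopen property'' without further justification, whereas the paper invokes \cite[Lemma 3.1.1]{Con99} for the open-immersion half and the standard fact that base change preserves closed immersions for the other half; conversely, you supply an explicit argument for the \emph{finiteness} of the union of connected components (via Lemma \ref{eine implikation fuer spec wide op} applied to each $Z_i'$), a point the paper's proof leaves implicit.
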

\begin{proof}
	Assertion (\ref{separable}) follows from Lemma \ref{bc separable} and Assertion  (\ref{parts pi_i}) follows from Corollary \ref{cor phi_i}. It remains to prove (\ref{components}), i.e.\ that $\oo W' $ embeds into $Z'$ as a finite union of connected components of $\invers{\pi'}(\Dnull_{K'}^d)$.  Setting $\oo Z:= \invers{\pi}(\Dnull_{K}^d)$ and $(Z')^{\oo{\vphantom{s}}}:= \invers{\pi'}(\Dnull_{K'}^d)$, we find that $(\oo Z)'=(Z')^{\oo{\vphantom{s}}}$ since extension of scalars is  compatible with taking preimages under morphisms.
	The space $\oo W$ is a finite union of connected components of $\oo Z$, so in particular it is clopen in $\oo Z$. Thus we can apply \cite[Lemma 3.1.1]{Con99} to the open immersion $\oo W \nachinj \oo Z$ to deduce that $\oo W' \nach (Z')^{\oo{\vphantom{s}}}$ is an open immersion too. On the other hand, since base change takes closed immersions to closed immersions, we see that $\oo W' \nach (Z')^{\oo{\vphantom{s}}}$ is also a closed immersion, so $\oo W'$ is clopen in $(Z')^{\oo{\vphantom{s}}}$. Hence $\oo W'$ is a union of connected components of $(Z')^{\oo{\vphantom{s}}}$ and the assertion follows.
\end{proof}
Due to Corollary \ref{bc the mor def aff wide op}, we can construct a map 
\begin{equation}\label{sigma_pi'}
	\sigma=\sigma_{\pi'} \colon \Omega^d_{R'/K'} \nach \Omega^d_{T_n'/K'}
\end{equation}
by invoking the same formula as in Definition \ref{def trace sigma}, but now using the trace $\Tr_{L'/E'}$ of the
finite étale ring map
$
	E':=Q(T_d') \nachinj L':=Q(R')
$
induced by
$\pi' \colon Z' \nach \D^d_{K'}$.
\begin{rmk}
	In the decomposition $L'= L_1' \times \ldots \times L_s'$ according to (\ref{totring ist produkt}), each $L_i'$ is finite separable over $E'$, so there is the  map 
	$
	\Omega^d_{L'/K'} = \bigoplus_{i=1}^s \Omega^d_{L_i'/K'}   \xrightarrow{\sum_i \sigma_i} \Omega^d_{E'/K'}
	$
	which is readily seen to coincide with $\sigma$ when restricted to $ \Omega^d_{R'/K'} $.
\end{rmk}
\begin{lemma}\label{sigmas und bc}
	The diagram 
	\[
	\begin{tikzcd}
		\Omega^d_{R/K} \ctens{K} K' \arrow[r, "(\sigma_{\pi})'"] \arrow[d, Rightarrow, no head] & 	\Omega^d_{T_d'/K'} \\
		\Omega^d_{R'/K'} \arrow[ru, "\sigma_{\pi'}"']                               &  
	\end{tikzcd}
	\]
	commutes. In other words, 
	\[
	\begin{tikzcd}
		(\pi_*\omega_{Z})' \arrow[r, "(\sigma_{\pi})'"] \arrow[d, Rightarrow, no head] & \omega_{\D^d_{K'}} \\
		\pi'_*\omega_{Z'} \arrow[ru, "\sigma_{\pi'}"']                               &  
	\end{tikzcd}
	\]
	commutes.
\end{lemma}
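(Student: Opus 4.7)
The plan is to reduce both $\sigma_{\pi'}$ and $(\sigma_\pi)'$ to the trace map of the finite projective module extension $T_d' \hookrightarrow R'$, which will denote by $\Tr_{R'/T_d'}$, and then to use the fact that this trace is the base change of $\Tr_{R/T_d}$.

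First, one identifies $R' \cong R \otimes_{T_d} T_d'$: since $R$ is a finite $T_d$-module, the completed tensor product $R \,\hat{\otimes}_K K'$ agrees with the ordinary tensor product $R \otimes_{T_d} T_d'$ (this is exactly the reasoning used in the proof of Lemma \ref{ctens respect fin inj}). Using the base-change compatibility of universally finite Kähler differentials, this gives an identification $\Omega^d_{R/K} \,\hat{\otimes}_K K' \cong \Omega^d_{R'/K'}$ which fits into the isomorphism $\Omega^d_{R'/K'} \cong \Omega^d_{T_d'/K'} \otimes_{T_d'} R'$, the analogue of the first identification appearing in the defining formula (\ref{sigma vorschrift}) for $\sigma$. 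These are the vertical identifications appearing in the diagram of the lemma.

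Second, by repeating the argument of Lemma \ref{lemma sigma} (which showed $\Tr_{Q(B)/Q(A)}|_B = \Tr_{B/A}$ for a finite flat extension $A \hookrightarrow B$ of integral domains), $\sigma_{\pi'}$ can be re-expressed as the map induced on $d$-forms by the finite projective trace $\Tr_{R'/T_d'}$, just as $\sigma_\pi$ was so re-expressed for $R$ over $T_d$. Although $R'$ is in general no longer a domain, the decomposition (\ref{totring ist produkt}) exhibits $L' = Q(R')$ as a finite étale $E' = Q(T_d')$-algebra, and the trace on such an étale algebra decomposes as the sum of the traces over its field factors, so the same diagram-chase from Lemma \ref{lemma sigma} goes through verbatim with $Q(B)$ replaced by the finite product of fields $L' = \prod_i Q(R'/\mf p_i)$.

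Third, one invokes the base-change compatibility of the trace for finite projective modules: if $B$ is finite projective over $A$ and $A \to A'$ is any ring map, then, canonically, $\Tr_{B \otimes_A A'/A'} = \Tr_{B/A} \otimes_A \id_{A'}$ (this is immediate from the description of the trace as the composite $B \to \End_A(B) \xrightarrow{\sim} B^* \otimes_A B \to A$, each step being stable under base change). Applied to $A = T_d$, $B = R$, $A' = T_d'$, this yields $\Tr_{R'/T_d'} = \Tr_{R/T_d} \otimes_{T_d} \id_{T_d'}$ under the identification $R' \cong R \otimes_{T_d} T_d'$ from the first step. Tensoring up to $d$-forms via (\ref{sigma vorschrift}), one obtains $(\sigma_\pi)' = \sigma_{\pi'}$, which is the asserted commutativity.

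The main obstacle is bookkeeping: ensuring that the natural isomorphism $R \otimes_{T_d} T_d' \cong R'$ is compatible with $R \,\hat{\otimes}_K K' = R'$, and that the various identifications of differentials are the same ones appearing in the defining formulas for $\sigma_\pi$ and $\sigma_{\pi'}$. Once these are aligned, the commutativity is essentially tautological from functoriality of the trace under base change.
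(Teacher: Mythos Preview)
Your approach is correct and closely parallels the paper's, the common core being the base-change compatibility of the trace. The difference is the level at which you work: the paper stays at the level of total rings of fractions, using the identity $L' = L \otimes_E E'$ (equation (\ref{tens ist totring}), proved inside Lemma~\ref{bc separable}) to conclude $\Tr_{L'/E'} = \Tr_{L/E} \otimes \id_{E'}$ and then unwinding the two composites defining $(\sigma_\pi)'$ and $\sigma_{\pi'}$. You instead descend to the integral level, using $R' \cong R \otimes_{T_d} T_d'$ and the base-change compatibility of the trace of a finite projective module. Your route is arguably more direct, since it avoids passing through the fraction fields twice; the paper's route has the advantage that the definition of $\sigma$ is already phrased in terms of $\Tr_{L/E}$, so no translation step is needed.

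One point you assert without justification: that $R$ is finite \emph{projective} over $T_d$ (and hence $R'$ over $T_d'$). The map $T_d \hookrightarrow R$ is only assumed finite, injective and separable in Definition~\ref{def trace sigma}, not flat. Flatness does hold here because $T_d$ is regular and $R$ is regular (as $Z$ is smooth) of the same dimension, so miracle flatness applies; but you should say this explicitly, since otherwise the finite-projective trace $\Tr_{R/T_d}$ is not defined and the argument of Lemma~\ref{lemma sigma} (which needs $B$ finite flat over $A$) does not apply. Once this is supplied, your proof goes through.
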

\begin{proof}
	We write $L=Q(R), E=Q(T_d)$ and $L'=Q(R'), E'=Q(T_d')$ for brevity. We can view $(\sigma_{\pi})'$ as the composite\footnote{One obtains the third map in the composite by taking the completion of the map \linebreak$ (L \tens{E} \Omega^d_{E/K}) \tens{K} K' \nach L \tens{E} \Omega^d_{E'/K'}$ and observing that $L \ctens{E} \Omega^d_{E'/K'} = L \tens{E} \Omega^d_{E'/K'} $ since both factors in the tensor product are finite over $E$.}
	\[
	\begin{tikzcd}
		\Omega^d_{R/K} \ctens{K} K' 	 \arrow[r] & [-1.6em]  \Omega^d_{L/K} \ctens{K} K'  \arrow[r, Rightarrow, no head] & [-1.6em]  (L \tens{E} \Omega^d_{E/K}) \ctens{K} K'  \arrow[r] & [-1.6em]  L \tens{E} \Omega^d_{E'/K'}  \arrow[r, "\Tr_{L/E}\otimes \id"] & [0.2em]   \Omega^d_{E'/K'}
	\end{tikzcd}
	\]
	and $\sigma_{\pi'}$ as the composite
	\[
	\begin{tikzcd}
		\Omega^d_{R'/K'} \arrow[r] & [-1em] \Omega^d_{L'/K'} \arrow[r, Rightarrow, no head] & [-1em] L' \tens{E'} \Omega^d_{E'/K'} \arrow[r, "\Tr_{L'/E'}\otimes \id"] & [1em]   \Omega^d_{E'/K'}
	\end{tikzcd}
	\]
	whence we need to show that the diagram
	\[
	\begin{tikzcd}
		\Omega^d_{R/K} \ctens{K} K' 	 \arrow[r] \arrow[d, Rightarrow, no head] & [-1.6em]  \Omega^d_{L/K} \ctens{K} K'  \arrow[r, Rightarrow, no head] & [-1.6em]  (L \tens{E} \Omega^d_{E/K}) \ctens{K} K'  \arrow[r] & [-1.6em]  L \tens{E} \Omega^d_{E'/K'}  \arrow[r, "\Tr_{L/E}\otimes \id"] & [0.2em]   \Omega^d_{E'/K'} \\
		\Omega^d_{R'/K'} \arrow[r] & [-1em] \Omega^d_{L'/K'} \arrow[r, Rightarrow, no head] & [-1em] L' \tens{E'} \Omega^d_{E'/K'} \arrow[r, "\Tr_{L'/E'}\otimes \id"] & [1em]   \Omega^d_{E'/K'} \arrow[ru, Rightarrow, no head]
	\end{tikzcd}
	\]
	commutes.
	Under the identification
	$$
	L'= L \tens{E} E'
	$$
	from the proof of Lemma \ref{bc separable}, we find that $\Tr_{L'/E'}=\Tr_{L/E} \otimes \id_{E'}$.
	Thus we can replace the last arrow in the lower line of the above diagram with   $L \tens{E} E' \tens{E'} \Omega^d_{E'/K'} \xrightarrow{\Tr_{L/E}\otimes \id_{E'} \otimes \id}  \Omega^d_{E'/K'} $ to obtain the diagram
	\[
	\begin{tikzcd}
		\Omega^d_{R/K} \ctens{K} K' 	 \arrow[r] \arrow[d, Rightarrow, no head] & [-1.6em]  \Omega^d_{L/K} \ctens{K} K'  \arrow[r, Rightarrow, no head] & [-1.6em]  (L \tens{E} \Omega^d_{E/K}) \ctens{K} K'  \arrow[r] & [-1.6em]  L \tens{E} \Omega^d_{E'/K'}  \arrow[r, "\Tr_{L/E}\otimes \id"] & [0.2em]   \Omega^d_{E'/K'} \\
		\Omega^d_{R'/K'} \arrow[r] & [-1em] \Omega^d_{L'/K'} \arrow[r, Rightarrow, no head] & [-1em] L \tens{E} E' \tens{E'} \Omega^d_{E'/K'} \arrow[rr, "\Tr_{L/E}\otimes \id_{E'} \otimes \id"] & &   \Omega^d_{E'/K'} \arrow[u, Rightarrow, no head]
	\end{tikzcd}
	\]
	which now obviously commutes, completing the proof.
\end{proof}
\subsection{Base change results for the trace map and for Serre duality}\label{sec bc trace}
We keep the notation of the previous subsection: 	Let $Z=\Sp(R)$ be a connected smooth affinoid space, $\mathring W \subseteq Z$ a special affinoid wide-open space and $	\pi \colon Z \nach \D_K^d$ an associated finite surjective separable morphism.  Let 	$\pi' \colon Z' \nach \D_{K'}^d$ denote the morphism obtained by base change to $K'$. 
In this setting, we can use $\sigma_{\pi'}$ from (\ref{sigma_pi'}) to construct a trace map
$$
	t_{\oo W'}=t_{\pi'} \colon H^d_c(\mathring{W}', \omega_{Z'}) \nach K'
$$
exactly as in Definition \ref{defi beyers trace map} (even though $Z'$ is not necessarily connected). Letting $Z_1', \ldots, Z_s'$ denote the connected components of $Z'$, $\pi_i'$ denote the restriction of $\pi'$ to $Z_i'$, $\oo Z_i':=\invers{\pi_i'}(\Dnull^d_{K'})$ and $\oo W_i'$ denote the union of those connected components of $\oo W'$ that are contained in $\oo Z_i'$, we find the commutative diagram
\[
\begin{tikzcd}[column sep=0.5cm]
	H^d_c(\oo W', \omega_{Z'}) \arrow[d, Rightarrow, no head] \arrow[r] & H^d_c(\oo Z', \omega_{Z'}) \arrow[r, "\sim"]  & H^d_c(\Dnull_{K'}^d, \pi'_*\omega_{Z'}) \arrow[r, "H^d_c(\sigma_{\pi'})"] \arrow[d, Rightarrow, no head] & H^d_c(\Dnull_{K'}^d, \omega_{\Dnull_{K'}^d}) \\
	\bigoplus\limits_{i=1}^sH^d_c(\oo W'_i, \omega_{Z'}) \arrow[r]                                & \bigoplus\limits_{i=1}^sH^d_c(\oo Z'_i, \omega_{Z'}) \arrow[r, "\sim"]                                & \bigoplus\limits_{i=1}^s H^d_c(\Dnull_{K'}^d, {\pi'_i}_*\omega_{Z'}) \arrow[ru, "\sum_i H^d_c(\sigma_{\pi_i'})"', bend right]                &  
\end{tikzcd}
\]
which tells us that $t_{\pi'}=\sum_i t_{\pi'_i}$ and in particular shows that $t_{\pi'}$ does not depend on $\pi'$ (since we know that each $t_{\pi_i'}$ does not depend on $\pi_i'$). \vspace*{2mm} \\ 
We are now ready to prove the main results of this section, first for special affinoid wide-opens and then for Stein spaces in general.
\begin{proposition}\label{main bc prop}
	Let $Z=\Sp(R)$ be a connected smooth affinoid space of dimension $d$, $\mathring W \subseteq Z$ a special affinoid wide-open space. 
	Then the following diagram commutes:
	\begin{equation}\label{first main bc diag}
		\begin{gathered}
			\begin{tikzcd}
				H^d_c(\oo W', \omega_{Z'}) \arrow[rd, "t_{\oo W'}"]            &   \\
				& K'. \\
				K' \tens{K} H^d_c(\oo W, \omega_{Z}) \arrow[uu] \arrow[ru, "\id_{K'} \otimes t_{\oo W}"'] &  
			\end{tikzcd}
		\end{gathered}
	\end{equation}
\end{proposition}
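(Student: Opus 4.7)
The plan is to decompose the triangle (\ref{first main bc diag}) into a long rectangular diagram whose top row implements the definition of $t_{\oo W}$ (base-changed to $K'$) and whose bottom row implements the definition of $t_{\oo W'}$, and then to verify that each constituent square commutes.

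Concretely, I would insert the four defining steps of $t$ from Definition \ref{defi beyers trace map} and obtain the diagram
\[
\begin{tikzcd}[column sep=small]
K'\tens{K}H^d_c(\oo W,\omega_Z) \arrow[r]\arrow[d] & K'\tens{K}H^d_c(\oo Z,\omega_Z) \arrow[r,"\sim"]\arrow[d] & K'\tens{K}H^d_c(\Dnull^d_K,\pi_*\omega_Z) \arrow[r,"\id\otimes H^d_c(\sigma_\pi)"]\arrow[d] & K'\tens{K}H^d_c(\Dnull^d_K,\omega_{\Dnull^d_K}) \arrow[r,"\id\otimes \res"]\arrow[d] & K' \arrow[d,Rightarrow,no head] \\
H^d_c(\oo W',\omega_{Z'}) \arrow[r] & H^d_c(\oo Z',\omega_{Z'}) \arrow[r,"\sim"] & H^d_c(\Dnull^d_{K'},\pi'_*\omega_{Z'}) \arrow[r,"H^d_c(\sigma_{\pi'})"'] & H^d_c(\Dnull^d_{K'},\omega_{\Dnull^d_{K'}}) \arrow[r,"\res"'] & K',
\end{tikzcd}
\]
where the vertical maps are the base change comparison maps of Proposition \ref{bc spec wide op coho}.

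The first square commutes because the comparison map (\ref{c coh map}) is natural in both the coherent sheaf and in the (clopen) special affinoid wide-open subspace; this naturality is built into the construction in Proposition \ref{bc spec wide op coho}, since the comparison map is obtained from the functoriality of $H^j_V(S,-)$ and compatibility of base change with the \v{C}ech complex computing the cohomology of $S\setminus V$. The second square commutes by functoriality of the comparison map in $\mc F$ applied to the natural identification $(\pi_*\omega_Z)'\cong \pi'_*\omega_{Z'}$ (which holds because $\pi$ is finite, so the sheaf pushforward is just given by the module $R$ viewed over $T_d$, and extension of scalars commutes with this). The third square is precisely the content of Lemma \ref{sigmas und bc} (applied to the coherent sheaf version stated there), together with functoriality of the comparison map in the coherent sheaf.

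The last square, compatibility of $\res$ with base change, is the step requiring the most care and is the main obstacle. Using the coordinates $X=(X_1,\ldots,X_d)$ on $\D^d_K$, which also serve as coordinates on $\D^d_{K'}$, we have the explicit identifications $H^d_c(\Dnull^d_K,\omega_{\Dnull^d_K})\cong K\langle X^{-1}\rangle^\dagger\cdot \frac{dX}{X}$ and $H^d_c(\Dnull^d_{K'},\omega_{\Dnull^d_{K'}})\cong K'\langle X^{-1}\rangle^\dagger\cdot \frac{dX}{X}$ from \cite[Corollary 1.2.5]{Bey97a}, and the comparison map of Proposition \ref{bc spec wide op coho} sends $\lambda\otimes\sum_{i\leq 0}r_i X^i\cdot\frac{dX}{X}$ to $\sum_{i\leq 0}(\lambda r_i)X^i\cdot\frac{dX}{X}$ (this has to be checked by unwinding the \v{C}ech-complex description used to define the comparison map, but it amounts to the trivial fact that extension of scalars on the coefficients of an overconvergent series commutes with reading off the $0$-th coefficient). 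Since $\res$ on each side is, by (\ref{res vorschrift}), projection onto the $0$-th coefficient, commutativity of this square is then immediate. Once all four squares commute, the outer rectangle is the desired diagram (\ref{first main bc diag}).
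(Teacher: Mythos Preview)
Your proposal is correct and follows essentially the same approach as the paper: decompose the triangle into the four steps of Definition \ref{defi beyers trace map}, verify the $\sigma$-square via Lemma \ref{sigmas und bc}, and handle the $\res$-square by the explicit coordinate description (\ref{res vorschrift}). The paper organizes things slightly differently---it treats the $\res$-compatibility as a ``special case $\oo W=\Dnull^d_K$'' first rather than as the last square, and it justifies the two middle squares by writing down the corresponding \v{C}ech-complex diagrams explicitly (using the cover $\mf W$ from (\ref{eps cover}))---but the logical content is the same.
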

\begin{proof}
First we consider the special case $\oo W= \Dnull_{K}^d, Z=\D_K^d$ with coordinates $X=(X_1,\ldots, X_d)$. Then there is an isomorphism $H^d_c(\Dnull^d_K, \omega_{\D^d_K}) \cong K \langle X^{-1}\rangle^\dagger \cdot \frac{dX}{X}$, and analogously over $K'$. In particular,  the diagram (\ref{first main bc diag}) in the assertion is just the diagram
		\[
		\begin{tikzcd}
			K' \langle X^{-1}\rangle^\dagger \cdot \frac{dX}{X} \arrow[rd, "(\res_{K'})"]            &   \\
			& K' \\
			K' \tens{K} (K \langle X^{-1}\rangle^\dagger \cdot \frac{dX}{X}) \arrow[uu] \arrow[ru, "\id_{K'} \otimes \res_{K}"'] &  
		\end{tikzcd}
		\]
		with maps according to (\ref{res vorschrift}). But this diagram obviously commutes. Now we consider a general special affinoid wide-open space $\mathring W \subseteq Z$ and an associated finite surjective separable morphism  $\pi \colon Z \nach \D_K^d$. We have to show that the outer contour of the diagram
		\[
		\begin{tikzcd}[column sep=0.4cm]
			{H^d_c(\oo W', \omega_{Z'})} \arrow[r]          & {H^d_c(\oo Z', \omega_{Z'})} \arrow[r, "\sim"] & {H^d_c(\Dnull_{K'}^d, \pi'_*\omega_{Z'})} \arrow[r, "\sigma_{\pi'}"] & {H^d_c(\Dnull_{K'}^d, \omega_{\Dnull_{K'}^d})} \arrow[rd, "\res_{K'}"]          &    \\
			&                                                &                                                                             &                                                                    & K' \\
			{H^d_c(\oo W, \omega_{Z})_{K'}} \arrow[r] \arrow[uu] & {H^d_c(\oo Z, \omega_{Z})_{K'}} \arrow[r, "\sim"] \arrow[uu]  & {H^d_c(\Dnull_{K}^d, {\pi}_*\omega_{Z})_{K'}} \arrow[r, "(\sigma_{\pi})'"] \arrow[uu]             & {H^d_c(\Dnull_{K}^d, \omega_{\Dnull_{K}^d})_{K'}} \arrow[ru, "(\res_{K})'"'] \arrow[uu] &   
		\end{tikzcd}
		\]
		commutes, where we have written $(-)_{K'}$ instead of $K' \tens{K} (-)$ in the lower line, for ease of notation. The left-hand square certainly commutes, since the first vertical map (from the left) is just the restriction of the second map to a direct summand. Next, we claim that the middle square commutes. To see this, we first consider any given $\varepsilon \in (0,1)$ and let $\mf W$ be the finite Leray cover of $\Dnull^d_K \setminus \D^d_K(\varepsilon)$ defined in (\ref{eps cover}).
		Then we recall from (the proof of) Proposition \ref{bc spec wide op coho} that the middle square is obtained by taking $\varinjlim_\varepsilon$ of the diagrams of Čech cohomology groups
		\[
		\begin{tikzcd}
			H^{d-1}(\invers{\pi'}\mf W', \omega_{Z'}) \arrow[r, Rightarrow, no head]           & {H}^{d-1}(\mf W', \pi'_*\omega_{Z'})           \\
			H^{d-1}(\invers{\pi}\mf W, \omega_{Z}) \tens{K} K' \arrow[r, Rightarrow, no head] \arrow[u] & {H}^{d-1}(\mf W, \pi_*\omega_{Z}) \tens{K} K' \arrow[u]
		\end{tikzcd}
		\]
		which obviously commute, whence our claim follows. 
		Similarly, the commutativity of the right-hand square above follows from the commutativity of 
		\[
		\begin{tikzcd}[column sep=1.4cm]
			\check{C}^{d-1}(\mf W', \pi'_*\omega_{Z'}) \arrow[r, "\sigma_{\pi'}"]           & 		\check{C}^{d-1}(\mf W', \omega_{\Dnull^d_{K'}})         \\
			{\check{C}}^{d-1}(\mf W, \pi_*\omega_{Z}) \tens{K} K' \arrow[u] \arrow[r, "\sigma_{\pi} \otimes \id"] \arrow[u] & {\check{C}}^{d-1}(\mf W, \omega_{\Dnull^d_{K}}) \tens{K} K' \arrow[u]
		\end{tikzcd}
		\]
		which holds by Lemma \ref{sigmas und bc}. Finally, the right-hand triangle commutes by what we have already discussed. The assertion follows. 
\end{proof}
\begin{theorem}\label{main bc result}
	Let $X$ be a connected smooth Stein space of dimension $d$. Choosing an admissible open cover $\{\mathring{W}_i\}_{i \in \N}$  of $X$ consisting of special affinoid wide-open  subsets (as in Lemma \ref{lem1}) and taking the colimit over all the maps $K' \tens{K} H^d_c(\oo W_i, \omega_{X}) \to H^d_c(\oo W_i', \omega_{X'})$ yields a map 
	\begin{equation}\label{bc map main}
		K' \tens{K} H^d_c(X, \omega_{X}) \to H^d_c(X', \omega_{X'})
	\end{equation}
	which is in fact canonical and makes the following diagram commute:
	\[
	\begin{tikzcd}
		H^d_c(X', \omega_{X'}) \arrow[rd, "t_{X'}"]            &   \\
		& K'. \\
		K' \tens{K} H^d_c(X, \omega_{X}) \arrow[uu] \arrow[ru, "\id \otimes t_{X}"'] &  
	\end{tikzcd}
	\]
%
\end{theorem}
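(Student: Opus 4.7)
The plan is to construct the map (\ref{bc map main}) as a colimit of the base change maps from Proposition \ref{bc spec wide op coho} applied to each $\oo W_i$, then deduce the commutativity and canonicity assertions by passing to the colimit in Proposition \ref{main bc prop}.

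First, for each $i$, Proposition \ref{bc spec wide op coho}(iii) provides a natural map $K' \tens{K} H^d_c(\oo W_i, \omega_X) \to H^d_c(\oo W_i', \omega_{X'})$, where I use that $\omega_X|_{\oo W_i} = \omega_{Z_i}|_{\oo W_i}$ since the canonical bundle is a local construction. By naturality, these maps are compatible with the restriction morphisms induced by the inclusions $\oo W_i \subseteq \oo W_{i+1}$. Since $(-) \tens{K} K'$ commutes with colimits and $H^d_c(X, \omega_X) = \varinjlim_i H^d_c(\oo W_i, \omega_X)$ by Definition \ref{bey trace for stein def}, passing to colimits yields a map
\[
K' \tens{K} H^d_c(X, \omega_X) \to \varinjlim_i H^d_c(\oo W_i', \omega_{X'}).
\]
I would then identify the right-hand side with $H^d_c(X', \omega_{X'})$. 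Since base change preserves open immersions (cf.\ the proof of Proposition \ref{bc spec wide op coho}(i)) and unions, $\{\oo W_i'\}$ is an admissible open cover of $X'$ with $\oo W_i' \subseteq \oo W_{i+1}'$. Moreover, by Corollary \ref{bc the mor def aff wide op}(iii), each $\oo W_i'$ is a finite union of connected components of $\invers{\pi_i'}(\Dnull_{K'}^d)$, i.e.\ a (possibly disconnected) special affinoid wide-open. Applying Beyer's trace formalism componentwise, as in the discussion preceding Proposition \ref{main bc prop}, one gets $H^d_c(X', \omega_{X'}) = \varinjlim_i H^d_c(\oo W_i', \omega_{X'})$ and $t_{X'} = \varinjlim_i t_{\oo W_i'}$.

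For canonicity, given a second admissible cover $\{\oo U_j\}$ as in Lemma \ref{lem1}, I would form a common refinement by interleaving, which is possible since both are increasing exhaustions of $X$. The base change maps from Proposition \ref{bc spec wide op coho}(iii) being natural with respect to open inclusions, the resulting colimits agree. The commutativity of the main trace diagram then follows immediately by taking the colimit over $i$ of the commutative diagrams supplied by Proposition \ref{main bc prop} applied to each $\oo W_i$, combined with the identifications $t_X = \varinjlim_i t_{\oo W_i}$ and $t_{X'} = \varinjlim_i t_{\oo W_i'}$.

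The main obstacle is the bookkeeping required to extend Beyer's trace formalism (originally formulated for connected Stein spaces) to the possibly disconnected $X'$: one must verify that the decomposition $t_{\oo W_i'} = \sum_k t_{(\oo W_i')_k}$ recorded before Proposition \ref{main bc prop} is coherent along the exhaustion as $i$ grows. This should reduce to the observation that each connected component of $X'$ is itself a smooth Stein space (by the remark on clopen subspaces in the Notation and conventions) exhausted by the corresponding connected components of the $\oo W_i'$, so that Beyer's construction from Definition \ref{bey trace for stein def} applies on each component and glues to give $t_{X'}$ as the direct sum of the component-wise trace maps.
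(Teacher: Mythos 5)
Your proposal follows essentially the same route as the paper: construct the map as a colimit of the per-piece base change morphisms and deduce commutativity by taking the colimit of the diagrams supplied by Proposition \ref{main bc prop}. Two small but worthwhile deviations: for canonicity the paper does not interleave the two exhaustions; it replaces the second cover $\{\oo V_i\}$ by $\{\oo V_i\cap\oo W_i\}$ using Beyer's closure-under-intersection lemma (\cite[Lemma 5.1.3]{Bey97a}), which is still an exhaustion and reduces immediately to the nested case $\oo V_i\subseteq\oo W_i$; interleaving also works, but it depends on knowing $\oo W_i\subseteq W_i$ is covered by finitely many $\oo V_j$'s, which requires the (true but unstated) observation $W_{i}\subseteq\oo W_{i+1}$. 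More importantly, your phrase ``natural with respect to open inclusions'' glosses over the step the paper spends real effort on: one must check that the base-changed inclusion $\oo V_i'\hookrightarrow\oo W_i'$ is itself an open immersion, which the paper establishes by tracing through Corollary \ref{bc the mor def aff wide op}(iii) and the explicit construction of $X'$. You should make that verification explicit rather than treating it as automatic. Your closing remarks on handling a possibly disconnected $X'$ and defining $t_{X'}$ componentwise are correct and in fact articulate an issue the paper itself leaves implicit, relying on the $t_{\pi'}=\sum_i t_{\pi_i'}$ discussion preceding Proposition \ref{main bc prop}.
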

\begin{proof}
	Recall that the projective tensor product topology commutes with filtered colimits of locally convex vector spaces. We included a proof of this fact for direct sums in \cite[Remark 6.2 (ii)]{Mal23}, but the same reasoning applies verbatim in general. Therefore, the isomorphism $$K' \tens{K}  \varinjlim_i H^d_c(\oo W_i, \omega_{X}) \cong \varinjlim_i K' \tens{K} H^d_c(\oo W_i, \omega_{X}) $$ is topological. Now, the map (\ref{bc map main}) is by definition the unique map that makes the left-hand rectangle in 
	\[
	\begin{tikzcd}[column sep=0.4cm]
		H^d_c(X', \omega_{X'}) \arrow[rr, Rightarrow, no head]                   &                                  & \varinjlim_i H^d_c(\oo W_i', \omega_{X'}) \arrow[rd]            &    \\
		&                                  &                         & K' \\
		K' \tens{K} H^d_c(X, \omega_{X}) \arrow[uu, dotted] \arrow[r, Rightarrow, no head] & K' \tens{K} \varinjlim_i  H^d_c(\oo W_i, \omega_{X})  \arrow[r, Rightarrow, no head] &  \varinjlim_i K' \tens{K} H^d_c(\oo W_i, \omega_{X}) \arrow[uu] \arrow[ru] &   
	\end{tikzcd}
	\]
	commute, and we have to show that the resulting outer contour in the above diagram commutes (cf.\ Definition \ref{bey trace for stein def}). But this is immediate, since the right-hand triangle commutes by Proposition \ref{main bc prop}. It only remains to prove that the map (\ref{bc map main}) is canonical, i.e.\ independent of the choice of the cover $\{\mathring{W}_i\}_{i \in \N}$. Given another cover $\{\mathring{V}_i\}_{i \in \N}$ of $X$ as in the assertion, we can assume that $\oo V_i \subseteq \oo W_i$ (otherwise replace $\oo V_i$ by $\oo V_i \cap \oo W_i$ which is again special affinoid wide-open by \cite[Lemma 5.1.3]{Bey97a}). Then we need to show that the diagram
	\[
	\begin{tikzcd}
		\varinjlim_i H^d_c(\oo V_i', \omega_{X'})  \arrow[r, Rightarrow, no head] & H^d_c(X', \omega_{X'})  \arrow[r, Rightarrow, no head] & \varinjlim_i H^d_c(\oo W_i', \omega_{X'})            \\
		\varinjlim_i K' \tens{K} H^d_c(\oo V_i, \omega_{X}) \arrow[rr] \arrow[u]           &                                  & \varinjlim_i K' \tens{K} H^d_c(\oo W_i, \omega_{X}) \arrow[u]
	\end{tikzcd}
	\]
	commutes. Granting that the map  $\oo V_i' \nach \oo W_i'$ is an open immersion, we find induced maps $H^d_c(\oo V_i', \omega_{X'}) \nach H^n_c(\oo W_i', \omega_{X'})$ whose colimit over all $i$
	makes the outer contour and the upper semicircle in the extended diagram
	\[
	\begin{tikzcd}
		\varinjlim_i H^d_c(\oo V_i', \omega_{X'})  \arrow[r, Rightarrow, no head] \arrow[rr, bend left] & H^d_c(X', \omega_{X'})  \arrow[r, Rightarrow, no head] & \varinjlim_i H^d_c(\oo W_i', \omega_{X'})            \\
		\varinjlim_i K' \tens{K} H^d_c(\oo V_i, \omega_{X}) \arrow[rr] \arrow[u]           &                                  & \varinjlim_i K' \tens{K} H^d_c(\oo W_i, \omega_{X}) \arrow[u]
	\end{tikzcd}
	\]
	commute, whence the desired commutativity of the rectangle follows. It remains to check that $\oo V_i' \nach \oo W_i'$ is an open immersion. Dropping the index $i$ for notational convenience, it suffices to show that $\oo W' \nach X'$ is an open immersion (since then $\oo V' \nachinj X'$ is an open immersion too, and $\oo V'$ lands in the admissible open $\oo W' \subseteq X'$). Adopting our standard notation $\mathring W \subseteq Z$ and $	\pi \colon Z \nach \D_K^d$, the desired conclusion follows since $\oo W'$ is an admissible open in $\invers{\pi'}(\Dnull_{K'}^d)$ by Proposition \ref{bc the mor def aff wide op} (\ref{components}), $\invers{\pi'}(\Dnull_{K'}^d)$   is certainly an admissible open in $Z'$, and $Z'$ is an admissible open in $X'$ by the explicit construction of $X'$. 
\end{proof}
Next, we prove that the Yoneda-Cartier pairing is also compatible with base change:
\begin{proposition}\label{bc yoneda-cartier}
	Let $X$ be a smooth rigid Stein $K$-space of dimension $d$ and let $\mc F$ be a coherent sheaf on $X$. Then the following diagram commutes for all $i \geq 0$:
	\[
	\begin{tikzcd}
		H^{d-i}_c(X', \mc F')   \arrow[r, phantom, sloped, "\times"] & \Ext^{i}_{X'}(\mc F', \omega_{X'}) \arrow[r]  &  H^{d}_c(X', \omega_{X'})  \\
		H^{d-i}_c(X, \mc F)  \arrow[r, phantom, sloped, "\times"]    \arrow[u]      & \Ext^{i}_X(\mc F, \omega_X) \arrow[r]   \arrow[u]        & H^{d}_c(X, \omega_X).       \arrow[u]
	\end{tikzcd}
	\]
\end{proposition}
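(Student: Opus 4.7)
The plan is to verify that each of the three entries in the Yoneda-Cartier pairing admits a natural base change map compatible with the pairing, and then invoke naturality. I already have the comparison map for $H^{d-i}_c$ (and for $H^d_c(-,\omega)$) from Proposition \ref{bc spec wide op coho} and Theorem \ref{main bc result}, so the first concrete task is to produce the middle vertical arrow $\Ext^{i}_X(\mc F, \omega_X) \to \Ext^{i}_{X'}(\mc F', \omega_{X'})$ (more precisely, a map from the $K'$-scalar extension of the left-hand side). Since $X$ is Stein, $\Ext^i_X(\mc F, \omega_X) = H^0(X, \sExt^i_X(\mc F, \omega_X))$ and $\sExt^i_X(\mc F, \omega_X)$ is coherent, so it suffices to construct an isomorphism $(\sExt^i_X(\mc F, \omega_X))' \isoauf \sExt^i_{X'}(\mc F', \omega_{X'})$. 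This is local, so I would verify it on an affinoid $\Sp(A) \subseteq X$: with $M = \mc F(\Sp A)$ finitely presented and $N = \omega_X(\Sp A)$, the natural map $\Ext^i_A(M, N) \otimes_A A' \to \Ext^i_{A'}(M \otimes_A A', N \otimes_A A')$ is an isomorphism because $M$ admits a resolution by finite free $A$-modules and $A \to A'$ is flat.

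Next, I would represent any class $\xi \in \Ext^i_X(\mc F, \omega_X)$ by a Yoneda $i$-extension
\[
0 \to \omega_X \to \mc E^{i-1} \to \cdots \to \mc E^0 \to \mc F \to 0
\]
of coherent sheaves on $X$. The Yoneda-Cartier pairing with $\xi$ is then the composition of the $i$ connecting homomorphisms in compactly supported cohomology arising from the short exact sequences obtained by splitting this resolution at its kernels. Since $K \to K'$ is flat and the pullback functor $\mc F \rightsquigarrow \mc F'$ is exact, base-changing the Yoneda extension produces a Yoneda $i$-extension on $X'$ representing the image $\xi'$ of $\xi$ under the map constructed in the previous paragraph. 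Hence the commutativity of the proposed diagram reduces to the assertion that the comparison map $H^{j}_c(X, -)\otimes_K K' \to H^j_c(X', (-)')$ is natural in the coherent sheaf and commutes with the connecting homomorphisms of short exact sequences.

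This last reduction is where the work lies. Naturality in the sheaf is clear from the construction in Proposition \ref{bc spec wide op coho}, since everything there is built from sheaf-theoretic operations (kernels, Čech complexes for finite Leray covers) that are manifestly functorial. For compatibility with connecting homomorphisms, I would pass to a fixed special affinoid wide-open $\oo W$ in the exhausting cover of Lemma \ref{lem1}, use that on such a piece $H^j_c$ is computed by the Čech complex on the fixed Leray cover $\invers{\varpi}\mf W$ (together with the standard long exact sequence identifying it with $H^{j-1}$ of the complement), and observe that the construction of connecting homomorphisms from the snake lemma on Čech cochain complexes commutes with $\otimes_K K'$. Passing to the colimit in $\varepsilon$ and then in $i$ (as in the proof of Theorem \ref{main bc result}) transfers the commutativity to $X$.

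The main obstacle I anticipate is keeping track of the two incarnations of the Ext class — as an element of $H^0(X, \sExt^i)$ and as a Yoneda $i$-extension — and ensuring that the base change map defined via the former is compatible with pullback of the latter. I would resolve this by observing that on an affinoid $\Sp(A)\subseteq X$ the identification $\sExt^i_X(\mc F, \omega_X)(\Sp A) = \Ext^i_A(M,N)$ is compatible with representing an Ext class by an extension of $A$-modules, and that tensoring such an extension with $A'$ (which is exact) corresponds on the sheaf side to the pullback $\mc E^\bullet \rightsquigarrow (\mc E^\bullet)'$; gluing then gives the required compatibility globally.
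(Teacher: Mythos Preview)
Your approach is correct but takes a different route from the paper. You work directly with Yoneda extensions: represent a class in $\Ext^i_X(\mc F,\omega_X)$ by an $i$-fold extension, base-change it, and then reduce the problem to showing that the comparison maps $H^j_c(X,-)\to H^j_c(X',(-)')$ form a morphism of $\delta$-functors (naturality in the sheaf plus compatibility with connecting homomorphisms), which you verify at the level of \v{C}ech complexes on the finite Leray covers from Proposition~\ref{bc spec wide op coho}. The paper instead reformulates the diagram as two natural transformations from the contravariant $\delta$-functor $\Ext^i_X(-,\omega_X)$ to the $\delta$-functor $\Hom_K(H^{d-i}_c(X,-),H^d_c(X',\omega_{X'}))$, and then invokes universality of $\Ext^i_X(-,\omega_X)$ (it is coeffaceable on a Stein space, since every coherent sheaf is a quotient of a free one) to reduce to $i=0$, where only the naked functoriality of the comparison maps in the sheaf variable is needed. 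The paper's argument is shorter, but it implicitly uses that both paths are morphisms of $\delta$-functors (otherwise universality says nothing about their agreement); your approach makes precisely this $\delta$-functor compatibility explicit, so in a sense you are filling in what the paper leaves tacit. Either way the content is the same: the only nontrivial input is that the base change comparison on $H^*_c$ is built from sheaf-functorial operations on \v{C}ech complexes and hence respects both morphisms of coherent sheaves and short exact sequences.
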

\begin{proof}
Let ${\alpha_{i,\mc F}}\colon H^{d-i}_c(X, \mc F) \nach H^{d-i}_c(X', \mc F')$ denote the comparison map for base change. 	We will prove the following equivalent formulation of the theorem:  the diagram 
	\[
		{\scalefont{0.92}
			\begin{tikzcd}[column sep=0.3cm]
				{\Ext^i_{X'}(\mc F', \omega_{X'})} \arrow[r]         & {\Hom_{K'}(H^{d-i}_c(X', \mc F'), H^{d}_c(X', \omega_{X'})) } \arrow[rd, "(-)\circ \alpha_{i,\mc F}"]                      &    \\
				&                                                                             & \Hom_K(H^{d-i}_c(X, \mc F), H^{d}_c(X', \omega_{X'}))  \\
				{\Ext^i_X(\mc F, \omega_{X})} \arrow[uu] \arrow[r] & {\Hom_K(H^{d-i}_c(X, \mc F), H^{d}_c(X, \omega_{X}))}  \arrow[ru, " \alpha_{0,\omega_X} \circ (-)"'] &   
			\end{tikzcd}
		}
		\]
		commutes for all $i \geq 0$. 
		We can view the content of this diagram as having two maps from the $\delta$-functor $\Ext^i_X(-, \omega_{X})$ to the $\delta$-functor $\Hom_K(H^{d-i}_c(X, -), H^{d}_c(X', \omega_{X'}))$, and we want to prove that these maps coincide. Note that the latter is indeed a $\delta$-functor, since it is the composite  $\Hom_K(-, H^{d}_c(X', \omega_{X'})) \circ H^{d-i}_c(X, -)$ of the $\delta$-functor $H^{d-i}_c(X, -)$ with the exact functor $\Hom_K(-, H^{d}_c(X', \omega_{X'}))$. Now, since $\Ext^i_X(-, \omega_{X})$ is a universal $\delta$-functor, it suffices to show that the mentioned maps coincide for $i=0$, i.e.\ that the above diagram commutes for $i=0$. For this, consider any given $\gamma \in \Hom_X(\mc F, \omega_{X})$. If we denote its image in $\Hom_{X'}(\mc F', \omega_{X'})$ by $\gamma'$ -{}- so, for $U \subseteq X$ affinoid, $\gamma'$ over $U'$ is  
		$\mc{F'}(U')=\mc{F}(U) \ctens{K} K' \xrightarrow{\gamma \ctens{} \id} \omega_X(U) \ctens{K} K'=\omega_{X'}(U')$ -{}- then the commutativity of the above diagram for $i=0$ amounts to the commutativity of 
		\[
		\begin{tikzcd}[column sep=2cm]
			H^{d}_c(X', \mc F') \arrow[r, "{H^d_c(X', \gamma')}"]           & H^{d}_c(X', \omega_{X'})           \\
			H^{d}_c(X, \mc F) \arrow[u, "\alpha_{0,\mc F}"'] \arrow[r, "{H^d_c(X, \gamma)}"] & H^{d}_c(X, \omega_{X}) \arrow[u, "\alpha_{0,\omega_X}"']
		\end{tikzcd}
		\]
		which holds true for all $\gamma \in \Hom_X(\mc F, \omega_{X})$ since it is evident from the construction of our base-change-comparison maps $\alpha$ that they are functorial in this sense. Thus the proposition is proved. 
	\end{proof}
	Now we can combine the content of Theorem \ref{main bc result} and Proposition \ref{bc yoneda-cartier} into the following:
	\begin{corollary}\label{bc ult result}
		Let $X$ be a smooth rigid Stein $K$-space of dimension $d$. Then the Serre duality pairing from Theorem \ref{thm serre duality} is compatible with base change to any complete extension field $K'/K$, in the following sense: Letting $X'$
		be the base change of $X$ to $K'$  and $\mc F  \rightsquigarrow \mc F'$ be the pullback functor 
		from coherent sheaves on $X$ to coherent sheaves on $X'$, the diagram
		\[
		\begin{tikzcd}
			{H^{d-i}_c(X', \mc F')} \arrow[r, "\times", phantom]         & {\Ext^{i}_{X'}(\mc F', \omega_{X'})} \arrow[r]    & H^{d}_c(X', \omega_{X'})   \arrow[r, "t_{X'}"] & K'          \\
			{H^{d-i}_c(X, \mc F)} \arrow[r, "\times", phantom] \arrow[u] & {\Ext^{i}_X(\mc F, \omega_X)} \arrow[u] \arrow[r] & H^{d}_c(X, \omega_{X}) \arrow[u]   \arrow[r, "t_X"] & K \arrow[u]
		\end{tikzcd}
		\]
		commutes for all coherent sheaves $\mc F $ on $X$ and all $i \geq 0$.
	\end{corollary}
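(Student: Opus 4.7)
The plan is to simply concatenate the two compatibility results proved immediately beforehand. The diagram in the statement naturally splits along the middle column $H^d_c(X,\omega_X) \to H^d_c(X',\omega_{X'})$ into a left square involving the Yoneda--Cartier pairing and a right square involving Beyer's trace map.

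First I would observe that the left square
\[
\begin{tikzcd}
{H^{d-i}_c(X', \mc F')} \arrow[r, "\times", phantom] & {\Ext^{i}_{X'}(\mc F', \omega_{X'})} \arrow[r] & H^{d}_c(X', \omega_{X'}) \\
{H^{d-i}_c(X, \mc F)} \arrow[r, "\times", phantom] \arrow[u] & {\Ext^{i}_X(\mc F, \omega_X)} \arrow[u] \arrow[r] & H^{d}_c(X, \omega_{X}) \arrow[u]
\end{tikzcd}
\]
commutes for all $i\geq 0$ by Proposition \ref{bc yoneda-cartier}, which expresses exactly the compatibility of the cup/Yoneda pairing with the base-change comparison maps constructed in Section \ref{sec bc maps}.

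Next I would note that the right square
\[
\begin{tikzcd}
H^{d}_c(X', \omega_{X'}) \arrow[r, "t_{X'}"] & K' \\
H^{d}_c(X, \omega_{X}) \arrow[u] \arrow[r, "t_X"] & K \arrow[u]
\end{tikzcd}
\]
commutes by Theorem \ref{main bc result} (applied to $\omega_X$), which is precisely the statement that Beyer's trace map is compatible with base change along the canonical map $K'\otimes_K H^d_c(X,\omega_X) \to H^d_c(X',\omega_{X'})$. (If $X$ happens to be disconnected, one applies Theorem \ref{main bc result} componentwise; but by \cite[Satz 3.4]{Lut73} and the discussion on clopen subspaces, the connected components of $X$ form an admissible decomposition, and both the trace map and the base-change map are defined componentwise.)

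Pasting the two squares along the common edge $H^{d}_c(X, \omega_{X}) \to H^{d}_c(X', \omega_{X'})$ yields the commutativity of the full diagram in the statement. There is no substantive obstacle here, as all the work has been carried out in the preceding two results; the corollary is purely formal.
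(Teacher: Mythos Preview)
Your proposal is correct and matches the paper's approach exactly: the paper presents this corollary without proof, introducing it as a summary of Theorem \ref{main bc result} and Proposition \ref{bc yoneda-cartier}, which is precisely the decomposition into the right and left squares that you carry out. Your parenthetical remark on the disconnected case is a helpful addition, since Theorem \ref{main bc result} is stated for connected $X$ while the corollary is not.
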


\providecommand{\bysame}{\leavevmode\hbox to3em{\hrulefill}\thinspace}
\providecommand{\MR}{\relax\ifhmode\unskip\space\fi MR }
\providecommand{\MRhref}[2]{%
	\href{http://www.ams.org/mathscinet-getitem?mr=#1}{#2}
}
\providecommand{\href}[2]{#2}

\end{document}